\documentclass[12pt]{article}
\usepackage{amssymb}
\usepackage{amsmath,amsthm}
\usepackage{verbatim}
\usepackage{graphicx}
\usepackage{epstopdf}
\usepackage{epsf}
\usepackage{geometry}
\geometry{
 letterpaper,
 left=30mm,
 right=30mm,
 top=29mm,
 bottom=39mm,
 }
\usepackage[flushmargin]{footmisc}
\usepackage{subfig}
\usepackage[usenames,dvipsnames]{color}

\usepackage[colorlinks=true,citecolor=black,linkcolor=black,urlcolor=blue]{hyperref}

\usepackage{enumitem}
\setlist{itemsep=0pt, topsep=0pt}

\newtheorem{theorem}{Theorem}[section]
\newtheorem{lemma}[theorem]{Lemma}

\newtheorem{example}[theorem]{Example}

\newtheorem{conjecture}[theorem]{Conjecture}
\newtheorem{corollary}[theorem]{Corollary}

\newtheorem{proposition}[theorem]{Proposition}
\newtheorem{observation}[theorem]{Observation}
\newtheorem{problem}[theorem]{Problem}

\numberwithin{subcase}{case}

\newtheorem{definition}[theorem]{Definition}

\newcommand{\ep}{\epsilon} 
\newcommand{\eps}{\epsilon} 

\newcommand{\floor}[1]{\left\lfloor#1\right\rfloor}
\newcommand{\ceiling}[1]{\left\lceil#1\right\rceil}

\newcommand{\of}[1]{\left( #1 \right)}
\newcommand{\set}[1]{\left\{ #1 \right\}}

\newcommand{\sqbs}[1]{\left[ #1 \right]}

\newcommand{\bfrac}[2]{\of{\frac{#1}{#2}}}
\renewcommand{\Pr}[1]{\mathbb{P}\sqbs{#1}}
\newcommand{\E}[1]{\mathbb{E}\sqbs{#1}}
\newcommand{\tbf}[1]{\textbf{#1}}
\newcommand{\wh}[1]{\widehat{#1}}
\newcommand{\opoo}{(1+o(1))}

\newcommand{\gnp}{G({n,p})}

\newcommand{\cefrac}[2]{\ceiling{\frac{#1}{#2}}}
\newcommand{\flfrac}[2]{\floor{\frac{#1}{#2}}}

\newcommand{\cT}{{\mathcal T}}
\newcommand{\cP}{{\mathcal P}}
\newcommand{\cC}{{\mathcal C}}

\newcommand{\tp}{\mathrm{tp}}
\newcommand{\tc}{\mathrm{tc}}
\newcommand{\pp}{\mathrm{pp}}

\newcommand{\cp}{\mathrm{cp}}

\newcommand{\tm}{\mathrm{tm}}


\title{Partitioning random graphs into monochromatic components}

\author{Deepak Bal\\
\small Department of Mathematical Sciences\\[-0.8ex]
\small Montclair State University\\[-0.8ex] 
\small Montclair, New Jersey; U.S.A.\\
\small\tt deepak.bal@montclair.edu\\
\and
Louis DeBiasio\thanks{Research supported in part by Simons Foundation Collaboration Grant \# 283194.}\\
\small Department of Mathematics\\[-0.8ex]
\small Miami University\\[-0.8ex] 
\small Oxford, Ohio; U.S.A.\\
\small\tt debiasld@miamioh.edu\\
}



\begin{document}

\maketitle

\begin{abstract}
Erd\H{o}s, Gy\'arf\'as, and Pyber 
(1991) 
conjectured that every $r$-colored complete graph can be partitioned into at most $r-1$ monochromatic components; this is a strengthening of a conjecture of Lov\'asz 
(1975) 
and Ryser 
(1970) 
in which the components are only required to form a cover.  An important partial result of Haxell and Kohayakawa (1995) 
shows that a partition into $r$ monochromatic components is possible for sufficiently large $r$-colored complete graphs.

We start by extending Haxell and Kohayakawa's result to graphs with large minimum degree, then we provide some partial analogs of their result for random graphs.  In particular, we show that if $p\ge \bfrac{27\log n}{n}^{1/3}$, then a.a.s.~in every $2$-coloring of $G(n,p)$ there exists a partition into two monochromatic components, and for $r\geq 2$ if $p\ll \bfrac{r\log n}{n}^{1/r}$, then a.a.s.~there exists an $r$-coloring of $G(n,p)$ such that there does not exist a cover with a bounded number of components. Finally, we consider a random graph version of a classic result of Gy\'arf\'as 
(1977) 
about large monochromatic components in $r$-colored complete graphs.  We show that if $p=\frac{\omega(1)}{n}$, then a.a.s.~in every $r$-coloring of $G(n,p)$ there exists a monochromatic component of order at least $(1-o(1))\frac{n}{r-1}$.

\end{abstract}

\section{Introduction}
For a graph $G$ and positive integer $r$, the \tbf{$r$-color tree-partition (tree-cover) number} of $G$, denoted by $\tp_r(G)$ ($\tc_r(G)$), is the minimum $s$ such that for every $r$-edge-coloring of $G$, there exists a collection of monochromatic connected subgraphs $\{H_1, \dots, H_t\}$ with $t\leq s$ such that $\{V(H_1),\dots, V(H_t)\}$ forms a partition (cover) of $V(G)$; as each subgraph $H_i$ contains a monochromatic spanning tree we use ``connected subgraph'', ``tree'', and ``component'' interchangeably throughout the paper.  Similarly define $\pp_r(G), \cp_r(G)$ to be the $r$-color path-partition number and $r$-color cycle-partition number of $G$ respectively. 

Gy\'arf\'as \cite{Gy77} noted that the following is an equivalent formulation of what is known in the literature as ``Ryser's conjecture" or the ``Lov\'asz-Ryser conjecture."

\begin{conjecture}[Ryser 1970 (see \cite{Hen}), Lov\'asz 1975 \cite{Lov}]\label{con:LovRys}
Let $r\geq 2$.  For all graphs $G$, $\tc_r(G)\leq (r-1)\alpha(G)$.
\end{conjecture}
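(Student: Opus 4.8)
Since this is the Lov\'asz--Ryser conjecture, widely open, a realistic plan is to settle the base case, set up the natural induction, and be explicit about where it breaks down; I would not expect to prove the full conjecture for all $r$.

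For $r=2$ the claim $\tc_2(G)\le\alpha(G)$ follows from K\"onig's theorem. Given a red/blue coloring, form the bipartite ``intersection graph'' $H$ whose parts are the red components and the blue components of $G$, joining a red component $R$ to a blue component $B$ once for each vertex of $G$ that lies in both. A family of monochromatic components covers $V(G)$ exactly when the corresponding vertices of $H$ form a vertex cover, so $\tc_2(G)=\tau(H)$; by K\"onig, $\tau(H)=\nu(H)$, and a matching of $H$ selects vertices of $G$ lying in pairwise distinct red \emph{and} pairwise distinct blue components, hence pairwise non-adjacent, so $\nu(H)\le\alpha(G)$.

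For general $r$ the natural attempt is induction on $r$: delete one color class, apply the inductive bound to the remaining $(r-1)$-colored graph, and mop up the still-uncovered vertices with at most $\alpha(G)$ monochromatic components of the deleted color. This already stalls at $r=3$, because deleting a color can drastically increase the independence number of the remaining graph, so the inductive hypothesis no longer delivers a bound of $(r-2)\alpha(G)$ on the reduced instance. The $r=3$ case is nonetheless known (Aharoni), proved by a topological argument: via the Aharoni--Berger--Ziv matching theorem and Meshulam-type bounds on the connectivity of independence complexes, resting ultimately on Haxell's independent-transversal theorem.

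The genuine obstacle, and the reason the conjecture is open for $r\ge 4$, is that no substitute for this induction is known: although the fractional relaxation of the covering problem is well understood, no rounding or topological argument recovers the integral bound $(r-1)\alpha(G)$ in general, and since the bound is tight --- truncated projective planes, equivalently affine planes of order $r-1$ when $r-1$ is a prime power --- there is no slack to exploit. A realistic target is therefore an approximate statement, say a cover by $(1+o(1))(r-1)\alpha(G)$ or at worst $O(r\alpha(G))$ monochromatic components, in the spirit of the Haxell--Kohayakawa theorem and the other results of this paper.
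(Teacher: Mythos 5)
You correctly recognized that Conjecture~\ref{con:LovRys} is an open conjecture, not a result proved in this paper; the paper merely states it, notes the $r=2$ case is the K\H{o}nig--Egerv\'ary theorem, cites Aharoni for $r=3$, and observes that $r\ge 4$ is open (with the $\alpha=1$ case known up to $r\le 5$). Your K\H{o}nig-based argument for $r=2$ is correct: building the bipartite multigraph $H$ on red and blue components with one edge per vertex of $G$, a cover of $V(G)$ by monochromatic components is exactly a vertex cover of $H$, a matching of $H$ selects pairwise non-adjacent vertices of $G$, and K\H{o}nig's theorem closes the loop, giving $\tc_2(G)=\tau(H)=\nu(H)\le\alpha(G)$. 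Your account of Aharoni's topological proof for $r=3$, the failure of the naive color-deletion induction, and the tightness via affine planes all align with the paper's discussion; there is nothing here to reconcile against a paper proof, since none exists.
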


If true, this conjecture is best possible when $r-1$ is a prime power by a well known example using affine planes\footnote{In an affine plane of order $r-1$, there are $r$ parallel classes of $r-1$ lines each. To each of the $r$ parallel class assign a distinct color.}. For $r=2$, this is equivalent to the K\H{o}nig-Egerv\'ary theorem.  Aharoni \cite{Ah} proved the $r=3$ case, and for $r\geq 4$ it is open.  Slightly more is known in the case $\alpha=1$ (i.e.~when $G=K_n$), where it has been proved for $r\leq 5$ (see \cite{GySurvey} and \cite{FLM} for more details).

In a seminal paper, Erd\H{o}s, Gy\'arf\'as, and Pyber \cite{EGP} proved that for all $r\geq 2$, $$\tp_r(K_n)\leq \pp_r(K_n)\leq \cp_r(K_n)=O(r^2\log r)$$ and made the following conjecture.

\begin{conjecture}[Erd\H{o}s, Gy\'arf\'as, Pyber 1991]\label{con:EGP}
For all $r\geq 2$, $\tp_r(K_n)=r-1$, $\pp_r(K_n)=\cp_r(K_n)=r$.
\end{conjecture}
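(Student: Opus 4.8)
Conjecture~\ref{con:EGP} is a well-known open problem, so the realistic plan is to treat the three quantities $\tp_r$, $\pp_r$, $\cp_r$ separately, in each case first disposing of the (easy) lower bound and then attacking the matching upper bound, which is where essentially all of the difficulty lies. The lower bounds are more or less known. When $r-1$ is a prime power, the affine-plane coloring of the footnote colors $K_n$ (for $n=(r-1)^2$, with a blow-up for general $n$) so that each color class is a disjoint union of $r-1$ cliques of size $r-1$; then every monochromatic connected subgraph sits inside a single line, and since $r-2$ lines can never cover $AG(2,r-1)$ one needs at least $r-1$ of them, whence $\tp_r(K_n)\ge\tc_r(K_n)\ge r-1$. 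A genuinely different, iterated ``star inside a clique'' construction (take a vertex $v$ all of whose edges get color $1$, and recurse with $r-1$ colors on the rest) forces $v$ and its analogues to be singleton parts for paths and cycles, giving $\pp_r(K_n)\ge\cp_r(K_n)\ge r$. The substance of the conjecture is therefore the three upper bounds $\tp_r(K_n)\le r-1$ and $\pp_r(K_n),\cp_r(K_n)\le r$ for $n$ large.

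For $\tp_r(K_n)\le r-1$: Haxell and Kohayakawa already provide a partition into $r$ monochromatic components for large $n$, so the entire game is to save one color. I would re-run their argument through Szemer\'edi's regularity lemma --- apply it to the $r$-colored $K_n$, majority-color each cluster pair to obtain a reduced $r$-colored almost-complete graph $R$ on $m$ clusters, partition all but $o(m)$ clusters of $R$ into at most $r$ monochromatic connected pieces, and inflate back to a vertex partition of $G$, absorbing the $o(n)$ exceptional and leftover vertices into whichever piece they send many edges to --- but now with a \emph{stability} step inserted: either $R$ is $\Omega(m^2)$-far from every affine-plane-type coloring, in which case a robust version of the argument should yield an $(r-1)$-piece partition directly, or $R$ is $o(m^2)$-close to such a coloring (which essentially forces $r-1$ to be near a prime power), in which case one follows the perturbation explicitly and merges a redundant color. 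The main obstacle is exactly this stability step: for general $r$, in particular when $r-1$ is far from any prime power, we have no description of the extremal or near-extremal colorings, so there is nothing concrete for the argument to grip --- this ``$+1$'' gap is precisely why the first open case of $\tc_r(K_n)\le r-1$ is already $r=6$.

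For $\pp_r(K_n),\cp_r(K_n)\le r$: the route is to upgrade a component partition into a path/cycle partition. Starting from any partition of $V(K_n)$ into a bounded number of monochromatic components --- $O(r^2\log r)$ of them by Erd\H{o}s, Gy\'arf\'as and Pyber~\cite{EGP}, later improved to $O(r\log r)$, and ideally $r-1$ if the previous step goes through --- I would, inside each monochromatic component, use the connected-matching/regularity method to extract a monochromatic path, or with one additional joining edge a monochromatic cycle, spanning almost all of that cluster-set, and then absorb the $o(n)$ uncovered vertices into one extra path or cycle. Conditional on $\tp_r(K_n)\le r-1$ this conversion is routine and essentially gives the path and cycle bounds; unconditionally, however, one is stuck with the $O(r\log r)$-many components, so the bottleneck is the same ``too many pieces'' phenomenon as before, which is exactly why the best known upper bounds on $\pp_r$ and $\cp_r$ are still $O(r\log r)$ rather than $r$. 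For these reasons I would regard the full conjecture as out of reach of current methods, and instead --- as this paper does --- prove the analogous statements under a large minimum degree hypothesis and in $\gnp$, where the relevant obstructions are different and more tractable.
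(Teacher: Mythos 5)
This statement is an open conjecture, not a theorem the paper proves, so there is no ``paper's proof'' to measure you against; your choice to write a research plan rather than claim a proof is the right instinct. But the plan has a fatal flaw that the paper itself points out: Pokrovskiy proved that $\cp_r(K_n)>r$ for every $r\geq 3$, so the cycle half of the Erd\H{o}s--Gy\'arf\'as--Pyber conjecture is \emph{false}, and the route you propose --- upgrading a monochromatic component partition into a cycle partition via connected matchings, regularity, and absorption --- cannot possibly deliver $\cp_r(K_n)\le r$. That pipeline is exactly what yields the known bound $\cp_r(K_n)=O(r\log r)$, and it cannot be pushed to $r$ because no argument can; any viable program must first decide what the true value of $\cp_r(K_n)$ is. The path case $\pp_r(K_n)=r$ does remain open for $r\ge 4$ and your sketch there is a fair summary of the landscape, but calling the component-to-path conversion ``routine'' (even conditionally on $\tp_r(K_n)\le r-1$) oversells it: the cycle case shows this is precisely the step where the piece count can blow past $r$.

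Two smaller problems. Your lower-bound sketch for paths and cycles does not work as written: if $v$ sees everything in color $1$ and the rest is colored with the remaining colors, then $v$ cannot lie on a monochromatic cycle, but it \emph{can} lie on a color-$1$ path of length two, so $v$ is not forced to be a singleton in a path partition. You also wrote the inequality backwards: since deleting one edge from each cycle turns a cycle partition into a path partition, one has $\pp_r(K_n)\le\cp_r(K_n)$, so a valid $\cp_r\ge r$ construction does not automatically give $\pp_r\ge r$. Finally, ``the first open case is $r=6$'' is the status of Ryser's \emph{cover} conjecture $\tc_r(K_n)\le r-1$; for the \emph{partition} version $\tp_r(K_n)\le r-1$ only $r\le 3$ is settled, so the open territory is larger than your remark suggests.
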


For countably infinite complete graphs, Conjecture \ref{con:EGP} is known to be true for paths and cycles for all $r$, and known to be true for trees when $r=2,3$ (with the appropriate notion of paths, cycles, and trees).  Rado \cite{Rad} proved $\pp_r(K_{\mathbb{N}})=r$; Elekes, D. Soukup, L. Soukup, and Szentmikl\'ossy \cite{ESSS} proved $\cp_r(K_{\mathbb{N}})=r$; and Nagy and Szentmikl\'ossy (see \cite{EGP}) proved $\tp_3(K_{\mathbb{N}})=2$.  However, for finite complete graphs the story is more complicated.

For trees, an old remark of Erd\H{o}s and Rado says that a graph or its complement is connected, i.e.~$\tp_2(K_n)=1$.  Erd\H{o}s, Gy\'arf\'as, and Pyber \cite{EGP} proved that $\tp_3(K_n)=2$.  Later, Haxell and Kohayakawa \cite{HK} proved the following. 

\begin{theorem}[Haxell, Kohayakawa 1995]\label{thm:HK}
Let $r\geq 2$.  If $n\geq \frac{3r^4r!\log r}{(1-1/r)^{3(r-1)}}$, then $\tp_r(K_n)\leq r$. 
\end{theorem}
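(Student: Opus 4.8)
The plan is to argue by contradiction. Suppose $K_n$ admits an $r$-coloring in which every partition into monochromatic connected pieces uses at least $r+1$ pieces, and fix such a partition $\mathcal P=\{C_1,\dots,C_k\}$ with $k\ge r+1$ as small as possible; among all minimum partitions I would in addition take one that is extremal for a suitable secondary parameter (for instance one minimizing $\sum_i|C_i|^2$, so the pieces are as balanced as possible, or one maximizing $|C_1|$), the precise choice to be pinned down once the main estimate is in hand. I would first record the easy bound $\tc_r(K_n)\le r$: for any vertex $v$, letting $D_i$ be the color-$i$ component containing $v$, every other vertex $u$ lies in $D_{c(uv)}$, where $c(uv)$ is the color of the edge $uv$, so $\{D_1,\dots,D_r\}$ covers $V(K_n)$. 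Thus the whole content of the theorem is the passage from such a cover to a partition, the obstruction being that deleting vertices from a monochromatic component can disconnect it.

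Since $k\ge r+1$, by pigeonhole two pieces of $\mathcal P$ receive the same color; say $C_1,C_2$ are monochromatic in color $1$. As $C_1$ and $C_2$ lie in distinct components of the color-$1$ graph, every edge between $C_1$ and $C_2$ has a color in $\{2,\dots,r\}$, so we face an $(r-1)$-coloring of a large complete bipartite graph. I would apply the bipartite analogue of Gy\'arf\'as's large-component theorem — or, more robustly, prove the needed instance directly by a Cauchy--Schwarz/double-counting argument locating a monochromatic double star — to find a color $j\in\{2,\dots,r\}$ and a color-$j$ connected subgraph $F$ (taken to be the full color-$j$ component that meets both $C_1$ and $C_2$) covering all but a small fraction of $C_1\cup C_2$. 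The leftover set $L=(V(C_1)\cup V(C_2))\setminus V(F)$ then has no color-$j$ edge to $V(F)$ at all (else those vertices would lie in $F$), which forces extra structure on $L$ and sets up a recursion with one fewer color.

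The crux — and the step I expect to be the main obstacle — is disposing of $L$ so as to obtain a partition with at most $k-1$ pieces: either $L$ is empty and we simply replace $C_1,C_2$ by $F$, or we must reassign the vertices of $L$ among the remaining pieces $C_3,\dots,C_k$ (or into $F$) while keeping each piece monochromatic and connected. Minimality of $\mathcal P$ already rules out the naive reassignments and restricts which edges can join $L$ to the other pieces — e.g.\ a vertex of $C_1$ cannot send a color-$1$ edge to any color-$1$ piece, or those two pieces would merge — but turning this into an actual contradiction requires $|L|$ to be negligible compared with the sizes of the pieces absorbing it, and after imposing the balance condition those sizes are of order $n/r$. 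Iterating the bipartite estimate over the (up to $r$) colors, each round costing roughly a factor $(1-1/r)$ in the relevant sizes and a factor of order $r$ in the number of cases, together with a $\log$-type loss from a union/concentration bound in the double-counting step, is what I would expect to reproduce the threshold $n\ge\frac{3r^4 r!\log r}{(1-1/r)^{3(r-1)}}$; I would work out the $r=2$ and $r=3$ cases first to calibrate the extremal parameter and the constants before treating general $r$.
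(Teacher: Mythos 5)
Your proposal takes a genuinely different route from the paper's, but it contains a genuine gap, and it is exactly the step you flag as ``the crux.'' For orientation: the paper in fact proves the sharper Theorem~\ref{thm:completepartition} (which implies the stated bound) by a \emph{direct construction}, not by contradiction. One starts at an arbitrary vertex $x_1$, sets $Y_1$ to be its largest monochromatic neighborhood, and forms nested cores $Y_{i+1}=Y_i\cap N_{i+1}(x_{i+1})$ for carefully chosen $x_{i+1}$, stopping as soon as every vertex outside the current core $Y_i$ has more than $i\log n$ neighbors in $Y_i$ in colors from $[i]$; a random $k$-coloring of the core (Lemma~\ref{lem:Ypartition}) then splits it so that every outside vertex hits some part in the matching color, and one reads off a partition into at most $r$ monochromatic trees of radius $\le 2$. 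No minimal counterexample, no bipartite Gy\'arf\'as step, and no absorption of a leftover set.

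In your outline, after finding two color-$1$ parts $C_1,C_2$ and a large color-$j$ component $F$ in the $(r-1)$-colored bipartite graph between them, you must dispose of $L=(V(C_1)\cup V(C_2))\setminus V(F)$ and produce a partition with $k-1$ parts. Two concrete obstacles are left unaddressed. First, since you take $F$ to be the \emph{full} color-$j$ component, $F$ may contain vertices of $C_3,\dots,C_k$; substituting $F$ into $\mathcal P$ forces you to delete $F\cap C_i$ from each $C_i$, and that deletion can disconnect $C_i$ in its color, so you no longer even have a partition into monochromatic connected pieces to continue with. Second, there is no mechanism to reattach a vertex $u\in L$: $u$ has no color-$j$ edge into $F$ (by maximality of $F$), minimality of $\mathcal P$ blocks the obvious color-$1$ attachments, and $K_n$ gives no guarantee that any of $u$'s edges into a given $C_i$ ($i\ge 3$) carry $C_i$'s color. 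Making $|L|$ small does not by itself make these reassignments possible. The choice of secondary extremal parameter, the proposed iteration over colors, and the claim that the accumulated losses reproduce the threshold $n\ge 3r^4 r!\log r/(1-1/r)^{3(r-1)}$ are all left undetermined, and I do not see how to close them; the absorption of $L$ is where the real work would have to happen, and the proposal does not do it.
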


Later, Fujita, Furuya, Gy\'arf\'as, and T\'oth \cite{FFGT} conjectured that the partition version of Conjecture \ref{con:LovRys} is true and proved it in the case when $r=2$.


For paths, Gerencs\'er and Gy\'arf\'as gave a simple proof of $\pp_2(K_n)=2$ (see the footnote in \cite{GG}).  
Much later, Pokrovskiy \cite{P} proved $\pp_3(K_n)=3$.

For cycles, Lehel conjectured that $\cp_2(K_n)=2$ (in fact, with cycles of different colors).  This was proved for large $n$ by \L uczak, R\"odl, and Szemer\'edi \cite{LRS}, and then for smaller, but still large $n$ by Allen \cite{Al}, and finally for all $n$ by Bessy and Thomass\'e \cite{BT}.  For general $r$, Gy\'arf\'as, Ruszink\'o, S\'ark\"ozy, Szemer\'edi \cite{GyRSS1} improved the result from \cite{EGP} to 
$$\pp_r(K_n)\leq \cp_r(K_n)\leq 100r\log r.$$ 
However for $r\geq 3$, Pokrovskiy \cite{P} proved $\cp_r(K_n)>r$.

\subsection{Large minimum degree}

Motivated by a new class of Ramsey-Tur\'an type problems raised by Schelp \cite{Schelp}, Balogh, Bar\'{a}t, Gerbner, Gy\'arf\'as, and S\'ark\"ozy \cite{BBGGS} conjectured (and proved an approximate version of) a significant strengthening of Bessy and Thomasse's result.  That is, if $\delta(G)>3n/4$, then $\cp_2(G)\leq 2$ (with cycles of different colors); they also provided an example which shows that the conjecture would be best possible.  DeBiasio and Nelsen \cite{DN} proved that this holds for $G$ with $\delta(G)>(3/4+o(1))n$ and then Letzter \cite{L2} proved that it holds exactly for sufficiently large $n$.

In Observation \ref{obs:alphar}, we note that there are graphs with minimum degree $n-r$ for which $\tp_r(G)\geq \tc_r(G)\geq r$ and thus it is natural to wonder how small we can make $\delta(G)$ while maintaining $\tp_r(G)\leq r$.  In Theorem \ref{thm:mindegpartition}, we prove a strengthening of Theorem \ref{thm:HK} for graphs with large minimum degree.  A corollary of our result is the following.

\begin{corollary}
For all $r\geq 2$ there exists $n_0$ such that if $G$ is a graph on $n\geq n_0$ vertices with $\delta(G)> (1-\frac{1}{er!})n$, then $\tp_r(G)\leq r$.
\end{corollary}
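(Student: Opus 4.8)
The corollary follows directly from Theorem \ref{thm:mindegpartition}, so the plan is to invoke that theorem with the right minimum-degree bound and then dispatch one elementary inequality. First I would recall the precise threshold in Theorem \ref{thm:mindegpartition}: for each fixed $r\ge 2$ it produces an $n_0$ such that every $n$-vertex graph $G$ with $n\ge n_0$ and $\delta(G)>\big(1-\tfrac{(1-1/r)^{r-1}}{r!}\big)n$ has $\tp_r(G)\le r$. (If that theorem is instead stated with an arbitrarily small additive slack $\varepsilon$ in the degree condition, the reduction below is unchanged, with the slack supplied by the strict gap $(1-1/r)^{r-1}>1/e$.) Taking the same $n_0$, to prove the corollary it then suffices to check that the corollary's hypothesis $\delta(G)>(1-\tfrac{1}{er!})n$ implies the theorem's, i.e.\ that $1-\tfrac{1}{er!}\ge 1-\tfrac{(1-1/r)^{r-1}}{r!}$.

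After cancelling the common factor $1/r!$, this is exactly the claim $(1-1/r)^{r-1}\ge 1/e$ for all integers $r\ge 2$, and this is the only computation. I would prove it by taking logarithms: with $x=1/r\in(0,\tfrac12]$ the inequality reads $\tfrac{1-x}{x}\ln(1-x)\ge -1$, equivalently $\ln\tfrac{1}{1-x}\le\tfrac{x}{1-x}$, which is immediate from $\ln\tfrac{1}{1-x}=\sum_{k\ge 1}\tfrac{x^k}{k}\le\sum_{k\ge 1}x^k=\tfrac{x}{1-x}$ (valid for $x\in(0,1)$). Equivalently, $(1-1/r)^{r-1}$ is decreasing in $r$ with limit $1/e$. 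Hence any $G$ on $n\ge n_0$ vertices with $\delta(G)>(1-\tfrac{1}{er!})n$ satisfies the hypothesis of Theorem \ref{thm:mindegpartition}, so $\tp_r(G)\le r$.

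The main obstacle does not lie in the corollary itself --- it is entirely absorbed into Theorem \ref{thm:mindegpartition}, the minimum-degree strengthening of Theorem \ref{thm:HK}, whose proof is the real work of this section. If one wanted a proof of the corollary not routed through that theorem, the hard part would be precisely the content of Theorem \ref{thm:mindegpartition}: showing that a minimum degree of $(1-\tfrac{1}{er!})n$ already forces enough structure --- a large monochromatic component on every large vertex subset, plus a fractional/probabilistic covering step that absorbs the remaining vertices into at most $r$ monochromatic pieces --- to run the Haxell--Kohayakawa argument as in Theorem \ref{thm:HK}.
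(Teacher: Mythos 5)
Your overall route is exactly the paper's: the corollary is obtained by invoking Theorem \ref{thm:mindegpartition} (together with the remark following the definition of an absorbing tree partition, that a $(k,l,n)$-absorbing tree partition yields $\tp_r(G)\le r$ by assigning the common leaf set to the trees arbitrarily). However, the theorem you ``recall'' is not the one the paper proves. Theorem \ref{thm:mindegpartition} does not have threshold $\bigl(1-\frac{(1-1/r)^{r-1}}{r!}\bigr)n$; its hypothesis is $\delta(G)\ge(1-\epsilon)n$ for a fixed $0<\epsilon<\frac{1}{er!}$, with $n_0$ depending on $\alpha=\frac{1}{er!}-\epsilon$ (the expression $(1-1/r)^{3(r-1)}$ you may have in mind occurs only in the bound on $n$ in Haxell--Kohayakawa's Theorem \ref{thm:HK}, not in any degree condition). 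So the constant in the degree hypothesis is exactly $\frac{1}{er!}$: your inequality $(1-1/r)^{r-1}\ge 1/e$, while correct, is comparing against a threshold that is not there, and your fallback remark --- that the strict gap $(1-1/r)^{r-1}>1/e$ supplies the additive slack if the theorem carries an $\varepsilon$ --- has nothing to spend, since in the actual theorem the relevant constant is $1/e$ itself.

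This matters because the slack is the only nontrivial point in the deduction. With the theorem as actually stated one must fix $\epsilon<\frac{1}{er!}$ in advance (so that $\alpha>0$ and $n_0=n_0(r,\epsilon)$ is finite), and one then covers exactly the graphs with $\delta(G)\ge(1-\epsilon)n$; a graph whose minimum degree exceeds $(1-\frac{1}{er!})n$ by only an additive constant forces $\epsilon\to\frac{1}{er!}$, hence $\alpha\to 0$ and $n_0\to\infty$, so the theorem cannot be applied uniformly to it. The corollary is therefore to be read (as the paper intends) with a fixed linear slack above $(1-\frac{1}{er!})n$, and your write-up should say this rather than derive the slack from a gap that does not exist in Theorem \ref{thm:mindegpartition}. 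Concretely: fix $\epsilon<\frac{1}{er!}$, take $n_0$ from the theorem, apply it to get either a $(1,l,n)$- or a $(k,l,n)$-absorbing tree partition with $2\le k\le r$, and cite the remark after the definition to convert that into a partition into at most $r$ monochromatic trees; the $(1-1/r)^{r-1}$ computation can be deleted.
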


Furthermore, in Example \ref{obs:tcrlb} we show that the minimum degree in the above result cannot be improved beyond $\sim (1-\frac{1}{r+1})n$.  Additionally, in Theorem \ref{thm:2colorcover} we show that for \emph{covering} 2-colored graphs with two monochromatic trees, this lower bound on the minimum degree is tight.

Theorem \ref{thm:mindegpartition} actually gives a ``robust" tree partition; that is, a collection of trees together with a linear sized set $L$ such that after deleting any subset of $L$, the remaining graph has a tree partition.  This is important as we will use it to obtain results on the tree partition number of the random graph $\gnp$.

Finally, as a consequence of our method of proof, we are able to improve the bound on $n$ in Theorem \ref{thm:HK}.

\begin{theorem}\label{thm:completepartition}
For $r\geq 2$ and $n\geq 3r^2r!\log r$, $\tp_r(K_n)\leq r$. 
\end{theorem}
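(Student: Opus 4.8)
The plan is to rerun the proof of Theorem~\ref{thm:mindegpartition} in the special case $G=K_n$ and to optimize the resulting threshold on $n$. The observation driving the improvement is that $K_n$ has minimum degree $n-1$, so the hypothesis of Theorem~\ref{thm:mindegpartition} is met with the maximum possible slack; consequently every place in the general argument where one loses a factor to account for edges missing at low-degree vertices --- in particular the $(1-1/r)^{\Theta(r)}$ losses that appear in Haxell--Kohayakawa's bound as the denominator $(1-1/r)^{3(r-1)}$ --- now costs nothing, and with a tighter accounting in the recursive step the superfluous polynomial factor comes down from $r^4$ to $r^2$ as well.

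The mechanism is an induction on $r$, which is what produces the $r!$: the base case $r=2$ is the Erd\H{o}s--Rado observation that $K_n$ or its complement is connected, so $\tp_2(K_n)=1\le 2$, and at the step from $r-1$ to $r$ the threshold gets multiplied by a factor of order $r$. For the inductive step one fixes an $r$-coloring of $K_n$ with $n\ge 3r^2r!\log r$ and reduces to the $(r-1)$-color statement by removing a near-spanning monochromatic component $C$ in some color, together with the appropriate structure, so that the remaining complete graph on $V(K_n)\setminus C$ is effectively an $(r-1)$-color instance; one then applies the inductive hypothesis to $V(K_n)\setminus C$ to partition it into at most $r-1$ monochromatic components, which together with $C$ gives at most $r$ in all. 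The recursion tolerates discarding roughly a $(1/r)$-fraction of the vertices at the $r$-th stage, so $|V(K_n)\setminus C|$ will still exceed $3(r-1)^2(r-1)!\log(r-1)$ as required, provided $C$ can be taken of order at least $(1-1/r+o(1))n$.

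The heart of the matter, and the main obstacle, is producing that near-spanning monochromatic component: Gy\'arf\'as's theorem guarantees only a monochromatic component of order $\ge n/(r-1)$, far short of spanning, so one cannot simply take the largest one. This is exactly the structural content of the proof of Theorem~\ref{thm:mindegpartition}, where a sampling and concentration argument isolates a color whose component structure becomes connected after deleting a small exceptional set; the $\log r$ in the threshold enters through this concentration step. The completeness of $K_n$ is what forces the exceptional set to be empty, so that $C$ is genuinely near-spanning and no degree-deficiency factor is paid --- this is the single ingredient by which the bound of Theorem~\ref{thm:HK} is sharpened.

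It then remains only to check the bookkeeping: that $n\ge 3r^2r!\log r$ does propagate to $|V(K_n)\setminus C|\ge 3(r-1)^2(r-1)!\log(r-1)$ down the recursion --- essentially the inequality $(1-1/r)\cdot 3r^2r!\log r\ge 3(r-1)^2(r-1)!\log(r-1)$, valid for all $r\ge 3$ --- and that the $r=2$ base case is consistent with the claimed threshold. Everything past the extraction of the near-spanning component is routine estimation.
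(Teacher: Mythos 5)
Your proposal is not the paper's proof, and it has a genuine gap. The paper's proof of Theorem~\ref{thm:completepartition} is \emph{not} an induction on $r$: it is a direct, iterative construction. Starting from $x_1$ with largest monochromatic neighborhood $Y_1$, it builds a shrinking chain $Y_1\supseteq Y_2\supseteq\cdots$ together with vertices $x_1,x_2,\dots$ such that $Y_i\subseteq N_j(x_j)$ for all $j\le i$, stopping the first time every vertex outside $\{x_1,\dots,x_i\}\cup Y_i$ has more than $i\log n$ neighbors in $Y_i$ of colors in $[i]$. It then invokes Lemma~\ref{lem:Ypartition} to randomly partition $Y_i$ so that each outside vertex can attach to some $x_j$ via a color-$j$ edge into the $j$-th part. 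The $r!$ comes from the repeated division of $|Y_i|$ by $r,r-1,\dots$ within this single pass, and the $\log r$ comes from the requirement $|Y_r|\ge r\log n$ against the threshold $n\ge 3r^2r!\log r$ --- not from any induction or concentration-over-$r$ argument.

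Two steps of your strategy fail as stated. First, the reduction to an $(r-1)$-color instance does not hold: after deleting the vertex set of a maximal monochromatic component $C$ of, say, color $1$, the complete graph on $V(K_n)\setminus V(C)$ can still contain color-$1$ edges (there may be other color-$1$ components), so it is still a genuine $r$-coloring and the inductive hypothesis about $(r-1)$-colorings cannot be applied to it. Without an argument that one color disappears from the residual graph, the induction does not close, and with $r$ colors remaining you would need the stronger (unproved) bound $\tp_r\le r-1$ on the residual, which is circular. Second, the premise that an $r$-colored $K_n$ contains a near-spanning monochromatic component is false in general: the affine-plane coloring (when $r-1$ is a prime power) has every monochromatic component of size only about $n/(r-1)$. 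Completeness of $K_n$ does not rescue this --- it is exactly the extremal example for the Gy\'arf\'as bound $\tm_r(K_n)\ge n/(r-1)$ you cite. The ``near-spanning component after deleting a small exceptional set'' you attribute to Theorem~\ref{thm:mindegpartition} is not what that theorem produces; it produces a collection of $k\le r$ trees sharing a common leaf set, with the probabilistic step (Lemma~\ref{lem:Ypartition}) being a random coloring of the common leaf set rather than a concentration argument isolating a single color. Your bookkeeping inequality $(1-1/r)\cdot 3r^2r!\log r\ge 3(r-1)^2(r-1)!\log(r-1)$ is arithmetically fine but is checking the wrong recursion, since no such recursion occurs.
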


\subsection{Random graphs}

An active area of current research concerns sparse random analogs of combinatorial theorems (see the survey of Conlon \cite{Con}).  An early example of such a result is the so called Random Ramsey Theorem.
Say $G\to_r H$ if every $r$-coloring of $G$ contains a monochromatic copy of $H$.  For fixed graphs $H$, R\"odl and Ruci\'nski \cite{RR1} determined the threshold for which a.a.s.\footnote{We say that a sequence of events $A_n$ happens \tbf{a.a.s.} if $\lim_{n\to\infty}\Pr{A_n} \to 1$.}~$\gnp \to_r H$.  For the case of paths, Letzter \cite{L1} proved that for $p=\frac{\omega(1)}{n}$ a.a.s., $\gnp \to_2 P_{(2/3-o(1))n}$. 
Random analogs of asymmetric Ramsey problems, hypergraph Ramsey problems, and van der Waerden's Theorem have also been studied (again, see \cite{Con}).

In light of these results, it is a natural question to ask whether monochromatic partitioning problems can be extended to the realm of random graphs in an interesting way. Towards this, we prove the following results which provide partial analogs of Conjecture \ref{con:LovRys}, 
and Theorem \ref{thm:HK} for random graphs.

\begin{theorem}\label{thm:gnp_threshold}
For all $r\geq 2$, there exists $C\geq r$ such that a.a.s.
\begin{enumerate}
\item  ~if~  $p\ge \bfrac{27\log n}{n}^{1/3}$ ~then~ $\tp_2(G(n,p)) \le 2$, and
\item  ~if~ $p\geq \bfrac{C\log n}{n}^{1/{(r+1)}}$, ~then~ $\tc_r(\gnp )\leq r^2$, and
\item \label{thm:gnp_old_one} ~if~ $p\ge \bfrac{C\log n}{n}^{1/r}$, then there is a collection of $r$ vertex disjoint monochromatic trees which cover all but at most $9r\log n /p=O(n^{1/r}(\log n)^{1-1/r})$ vertices.
\end{enumerate}
\end{theorem}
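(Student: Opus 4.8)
The plan is to prove the three parts of Theorem~\ref{thm:gnp_threshold} by combining the robust tree-partition machinery of Theorem~\ref{thm:mindegpartition} with standard sparse-random-graph tools (concentration of degrees, Chernoff bounds, and an analysis of how monochromatic trees in a dense ``core'' can be extended to absorb sparse leftover vertices). Throughout, write $p_0=(C\log n/n)^{1/(r+1)}$ (resp.\ $(27\log n/n)^{1/3}$, resp.\ $(C\log n/n)^{1/r}$), and recall that since the events are monotone it suffices to prove each statement at $p=p_0$.

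\medskip
\noindent\textbf{Part (i).} Fix a $2$-coloring of $G=\gnp$ with $p=(27\log n/n)^{1/3}$; I would split the vertex set into a set $U$ of $u:=\lceil (3\log n)/p^2\rceil$ vertices (chosen to make the relevant union bounds work) and its complement $W=V\setminus U$ of size $n-u$. The key observation is that a.a.s.\ \emph{every} vertex $w\in W$ and every pair of vertices in $W$ has its expected number of common neighbours inside $U$, namely $up^2\sim 3\log n$, concentrated by Chernoff, so that $G[U]$ together with $W$ ``looks'' like a complete bipartite-ish structure from the point of view of degrees into $U$: after deleting any set $L$ of up to, say, $u/2$ vertices from $U$, every $w\in W$ still has a neighbour in each colour class of $U$ of reasonable size. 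I would then apply the $r=2$ case of Theorem~\ref{thm:mindegpartition} (or Theorem~\ref{thm:completepartition}) to the $2$-colored graph $G[U]$ — since $u$ is of order $n^{1/3}(\log n)^{2/3}\to\infty$, $U$ is large enough — to obtain a robust partition of $U$ into two monochromatic trees $T_1,T_2$ with a linear-sized ``robust'' set $L\subseteq U$. Finally, each vertex $w\in W$ is coloured by the majority colour of its edges to $U$; a counting/probabilistic argument (using that $w$ has $\sim up$ neighbours in $U$, many of which lie in $T_1\setminus L$ or $T_2\setminus L$) shows we can attach $w$ to whichever of $T_1,T_2$ shares an edge of the right colour with $w$, possibly after removing a bounded-size bad set from the trees and re-absorbing it using robustness. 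The output is a partition of $V(G)$ into two monochromatic trees.

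\medskip
\noindent\textbf{Parts (ii) and (iii).} These follow the same template but with $r$ colours. For part~(iii), with $p=(C\log n/n)^{1/r}$, take $U$ of size $\Theta((\log n)/p^{r-1})=\Theta(n^{(r-1)/r}(\log n)^{1/r})$; then for each vertex $w\notin U$ and each $r$-tuple of potential ``connectors'' the expected number of length-$(r-1)$ common structures into $U$ is $\Theta(\log n)$, so a.a.s.\ $G[U]$ has minimum degree close to $|U|$ relative to the thresholds needed by Theorem~\ref{thm:mindegpartition}, giving a robust partition of $U$ into $r$ monochromatic trees with robust set $L$. A greedy argument then attaches all but $O((\log n)/p)=O(n^{1/r}(\log n)^{1-1/r})$ of the outside vertices to these $r$ trees: a vertex $v\notin U$ fails only if, in every colour $c$, $v$ has too few $c$-coloured edges into the part of the appropriate tree — and a union bound over colours and vertices shows the number of such failures is $O((\log n)/p)$ a.a.s. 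For part~(ii), one first invokes part~(iii) (with the appropriate $C$) to cover all but $O(n^{1/r}(\log n)^{1-1/r})$ vertices with $r$ trees using edges present when $p=(C\log n/n)^{1/(r+1)}\gg (C\log n/n)^{1/r}$ (so the hypothesis of~(iii) is satisfied), and then one covers the small leftover set $S$ of size $O(n^{1/r}(\log n)^{1-1/r})$ separately: since at this higher density every vertex of $S$ has $\gg r$ neighbours inside any given large subset of $V\setminus S$ in each colour (the gain from the exponent $1/(r+1)$ instead of $1/r$ is exactly what is needed to kill the union bound over the $\le r^{|S|}$ many colourings restricted to $S$), one can cover $S$ using at most $r^2-r$ additional monochromatic trees (partitioning $S$ by colour-profile and star-covering each class), for a total of at most $r^2$.

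\medskip
\noindent\textbf{Main obstacle.} The genuinely delicate point is the attachment/absorption step: after building the monochromatic trees inside $U$, one needs that \emph{every} outside vertex can be joined to a tree \emph{of the matching colour}, and the naive bound fails because a vertex can conspire against any single colour. This is precisely where the ``robust'' feature of Theorem~\ref{thm:mindegpartition} is essential — we have freedom in choosing which $L$-vertices to keep, so we can first commit a few vertices of each tree as ``docking sites,'' argue via a second-moment or Chernoff estimate that a.a.s.\ all but $O((\log n)/p)$ outside vertices dock successfully, and then fold the remaining exceptional vertices either into the robust slack (parts (i),(iii)) or into $O(r^2)$ extra components (part (ii)). Making the three union bounds (over vertices, over colours, and over the $\le r^{|U|}$ relevant colourings of $U$ when invoking robustness) all close simultaneously is what pins down the exponents $1/3$, $1/r$, and $1/(r+1)$ and the constants $27$ and $C$.
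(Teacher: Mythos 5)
Your plan has several gaps and does not match the paper's approach. The most basic issue is the proposal to apply Theorem~\ref{thm:mindegpartition} (or Theorem~\ref{thm:completepartition}) to the induced subgraph $G[U]$ for a small core $U$ of size $\Theta((\log n)/p^2)$ or $\Theta((\log n)/p^{r-1})$: the induced graph $G[U]$ is itself distributed as $G(|U|,p)$, so a.a.s.\ its minimum degree is $\Theta(|U|p)=o(|U|)$, far below the $(1-\epsilon)|U|$ required by Theorem~\ref{thm:mindegpartition}, and it is certainly not complete. The ``attach by majority colour'' step for part (i) also has no protection against the very obstruction you flag yourself: an adversary can colour $w$'s edges to $T_1$ with colour~$2$ and to $T_2$ with colour~$1$, and nothing in your sketch rules this out for a genuine partition. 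The paper avoids this entirely by a short structural argument: pick $u,v$ in no common monochromatic component, observe that their common neighbourhood then splits into colour-opposite classes $A$ and $B$, use Lemma~\ref{lem:rgs-local-conn} (local connectivity of $N^\cap(\{u,v\})$) to force $B=\emptyset$, and finish via Lemma~\ref{lem:Ypartition} with $Y=N^\cap(\{u,v\})$. Part (ii) in the paper is a two-line consequence of Proposition~\ref{prop:comneigh} combined with Lemma~\ref{lem:rgs-common-nbrs}; your detour through part (iii) and covering a leftover set $S$ with $r^2-r$ extra trees is unsubstantiated, since the claim that every vertex of $S$ has $\gg r$ neighbours in each colour is false under an adversarial colouring.

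For part (iii), even granting a partition of some dense core, a single round of absorption cannot reach the exponent $1/r$: the whole point of the ingredient you omit, the method of \emph{multiple exposures}, is to iterate. The paper writes $G(n,p)$ as $G_0\cup\cdots\cup G_r$ with each $G_i\sim G(n,\hat p)$, uses $G_0$ via Lemma~\ref{lem:linsizecommonleaves} (sparse regularity applied to \emph{all} of $G(n,p)$ together with Theorem~\ref{thm:mindegpartition} on the reduced graph) to build an $(s,\alpha n,n')$-absorbing tree partition with a \emph{linear-size} common leaf set $|L_0|\geq\alpha n$, and then in up to $r-s$ further rounds exposes fresh randomness $G_i$, checks via Lemma~\ref{lem:gnp-lose-nbrs} that all but at most $9\log n/p$ remaining vertices have enough neighbours in $L_{i-1}$, and either applies Lemma~\ref{lem:Ypartition} to finish or starts a new tree $T_{s+i}=\{x_i\}$ and shrinks the common leaf set to $L_i=N_{s+i}(x_i,L_{i-1})$, costing only a factor of roughly $\hat p/(2r)$. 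It is this iterated re-exposure that lets the leaf set survive up to $r-s$ potential shrinks while keeping the total leftover bounded by $9r\log n/p$; a one-shot scheme has no way to counter the adversary's freedom to concentrate the colours of a troublesome vertex away from the current trees.
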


\begin{theorem}\label{thm:gnp_threshold_low}
For all $r\geq 2$,
\begin{enumerate}
\item  ~if~ $p=\bfrac{r\log n-\omega(1)}{n}^{1/r}$, ~then~ $\tc_r(\gnp )> r$, and
\item ~if~ $p = o\of{ \bfrac{r\log n}{n}^{1/r}}$, ~then~ $\tc_r(\gnp )\to \infty$. 
\end{enumerate}
\end{theorem}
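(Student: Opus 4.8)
The plan is to exhibit, for a suitably sparse $p$, an $r$-coloring of $G(n,p)$ with no small tree-cover by finding many vertices that are ``color-isolated'' — i.e.\ vertices whose entire neighborhood (or almost all of it) is monochromatic in a single color, and moreover these monochromatic stars are forced to be distinct components. The natural source of such vertices is the following: if one colors the edges so that the color classes behave like $r$ independent copies of $G(n,q)$ with $q = p/r$ (heuristically), then a vertex $v$ is isolated in color class $i$ if it has no neighbor in that color, and the probability a given vertex is isolated in \emph{every} color except possibly one is governed by $(1-q)^{n}$-type quantities. When $p = \bfrac{r\log n}{n}^{1/r}$ we have $p^r = \frac{r\log n}{n}$, so $q^r \sim \frac{\log n}{n}$, and a standard second-moment / Poisson-paradigm computation should show that the number of vertices isolated in all $r$ colors tends to $\infty$ when $p = o\bfrac{(r\log n/n)^{1/r}}$, and is at least some growing function (or at least positive, after a more careful first-moment truncation) when $p = \bfrac{r\log n - \omega(1)}{n}^{1/r}$. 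Two vertices that are each totally isolated in the coloring must lie in separate monochromatic components, which forces $\tc_r(G(n,p))$ to exceed (resp.\ go to infinity with) the count; part (i) needs this count to be at least $r+1$, part (ii) needs it to tend to infinity.

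First I would fix the coloring scheme precisely: sample $G(n,p)$, then color each edge independently and uniformly from $[r]$; equivalently, reveal the color classes as $r$ correlated subgraphs. I would then define $X$ to be the number of vertices $v$ that are \emph{totally isolated}, meaning $v$ has no monochromatic component of size $\ge 2$ through it — for a tree-cover, each such $v$ must be covered by the trivial tree $\{v\}$, hence contributes a distinct component, so $\tc_r \ge X$. Actually a cleaner and slightly stronger target: let $v$ be \emph{$r$-rainbow-free-isolated} if for each color $i\in[r]$, $v$ has degree $0$ in color $i$ — but that just says $v$ is isolated in $G(n,p)$, which is too rare. The right notion (this is the subtle modeling point) is: $v$ is \emph{bad} if every edge at $v$ has a color $c(v)$ that appears on \emph{no other edge incident to the rest of a would-be large component}, so I will instead lower-bound $\tc_r$ by the number of vertices $v$ all of whose edges receive colors from a set that makes $v$ unextendable — the simplest rigorous version is to count $v$ with $\deg_{G(n,p)}(v)\le 1$ together with the isolated vertices, but the threshold $p\sim (r\log n/n)^{1/r}$ indicates the intended event is ``$v$ sees at most one color and that color class restricted near $v$ is a single edge,'' giving the exponent $r$ through the $r$ choices of which single color survives. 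I will settle the exact event by matching it to the stated threshold: we want $\Pr{v \text{ bad}} \asymp r \cdot (1-p)^{?}\cdot(\text{stuff}) = \Theta\bfrac{(r\log n/n)}{\cdot}$ so that $n\cdot\Pr{v\text{ bad}} \to \infty$ exactly when $p = o\bfrac{(r\log n/n)^{1/r}}$.

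The two parts then follow from standard concentration: for part (ii) ($p = o((r\log n/n)^{1/r})$) a first-moment lower bound gives $\E{X}\to\infty$ and a second-moment (or Chebyshev / Janson-type) argument gives $X\to\infty$ a.a.s., since the events ``$u$ bad'' and ``$v$ bad'' are nearly independent for $u\ne v$ (they share only the single potential edge $uv$, contributing a negligible correction). For part (i) ($p = ((r\log n-\omega(1))/n)^{1/r}$) the same first-moment estimate gives $\E{X}\to\infty$ as well — in fact this regime is strictly sparser in the relevant sense — so again $X\to\infty$ a.a.s., and in particular $X\ge r+1$ a.a.s., yielding $\tc_r(G(n,p)) > r$. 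The main obstacle I anticipate is \emph{pinning down the correct ``bad vertex'' event} so that (a) badness genuinely forces a separate component in every tree-cover (not just every tree-partition), and (b) its probability scales as exactly the $r$-th power of the threshold quantity; once that event is correctly identified, the probabilistic estimates are routine first- and second-moment computations, and the only care needed is handling the mild dependence between the color of edge $uv$ and the badness of both endpoints, plus a union bound argument to rule out large components absorbing these vertices (a bad vertex has the property that \emph{no} monochromatic edge at it can be extended, so it is automatically its own component — this is what must be verified).
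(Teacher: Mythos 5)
Your proposal is based on a fundamentally different (and, as written, incomplete) mechanism from the paper's, and you yourself flag that you cannot pin down the ``bad vertex'' event. The paper does \emph{not} use a random coloring. Instead it isolates a deterministic structure in $G(n,p)$: an independent set $S$ of size $s$ that is not dominating and such that every vertex outside $S$ has at most $r-1$ neighbors in $S$ (equivalently, no $r$-subset of $S$ has a common neighbor). Given such an $S$, Observation~\ref{obs:need_s_trees} constructs an \emph{adversarial} coloring explicitly: put color $r$ on all of $G[V\setminus X]$, and distribute the at most $r-1$ edges from each $v\in V\setminus X$ into $X$ among colors $1,\dots,r-1$ so that no $v$ repeats a color toward $X$. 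This makes each vertex of $X$ its own monochromatic component, and since $X$ is not dominating a color-$r$ tree is also needed, so $\tc_r(G)>s$. The random-graph input is Lemma~\ref{lem:0statement}, a second-moment computation showing that such an $S$ of size $r$ exists a.a.s.\ when $p=((r\log n-\omega(1))/n)^{1/r}$ (and of size $s\to\infty$ when $p=o((r\log n/n)^{1/r})$). The exponent $r$ in the threshold comes from $\Pr{\text{an $r$-set has no common neighbor}}\approx e^{-np^r}$, not from $r$ color choices as you conjecture.

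The gap in your proposal is therefore not merely a missing detail but a misdirected mechanism. Under a uniformly random coloring, a vertex $v$ of degree $\ge 1$ always has some colored edge, so it is always in a monochromatic component of size $\ge 2$; your candidate events either reduce to ``$v$ is isolated in $G(n,p)$'' (far too rare at this $p$) or do not actually force $v$ into its own component in every tree \emph{cover}, since $v$'s neighbors can still connect the star through $v$ to other parts. The ability to choose the coloring adversarially based on the graph is essential: the theorem asserts the existence of \emph{some} bad coloring, and the paper realizes this by tailoring colors to the found independent set $S$. If you want to salvage a random-coloring route you would need to show that the uniform coloring is a.a.s.\ bad, which is a stronger and different claim not matching the stated threshold; the natural and correct move is to first locate $S$ and then color deterministically as in Observation~\ref{obs:need_s_trees}.
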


It is interesting to compare Conjecture \ref{con:LovRys} to Theorem \ref{thm:gnp_threshold_low} as our results imply that almost every graph $G\sim G(n,1/2)$ (the uniform distribution on all graphs with $n$ vertices) satisfies $\tc_r(G)\leq r^2$, which is much smaller than the conjectured upper bound of $(r-1)\alpha(G)$ since it is known (see e.g.~\cite{FK}) that a.a.s.~$\alpha(G) \sim 2\log_2 n$.  So not only are tightness examples rare, examples for which $\tp_r(G)\geq \tc_r(G)>r^2$ are rare.

\subsection{Large monochromatic components}

We consider one further related line of research. Note that if an $r$-colored graph $G$ can be covered by $t$ monochromatic components, then $G$ contains a monochromatic component of order at least $|V(G)|/t$.  So we may directly ask how large of a monochromatic component we may find in an $r$-colored graph\footnote{Historically, the monochromatic partitioning problems mentioned in the first part of the introduction were motivated by their implications for graph Ramsey problems (see \cite{P2}).}.  Given a positive integer $r$ and a graph $G$, we let $tm_r(G)$ be the maximum integer $s$ such that the following holds: in every $r$-coloring of the edges of $G$, there exists a monochromatic component with at least $s$ vertices.  For $r\geq 2$, Gyarf\'as \cite{Gy77} proved $\tm_r(K_n)\geq \frac{n}{r-1}$ and F\"uredi \cite{Fur} proved $\tm_r(G)\geq \frac{n}{(r-1)\alpha(G)}$ for all graphs $G$ (see Theorem 5.6 in \cite{GySurvey}).  Furthermore, this is tight when $r-1$ is a prime power using the same affine plane example mentioned before.  Given the discussion above, note that F\"uredi's result would be implied by Conjecture \ref{con:LovRys}. 

Concerning random graphs, two sets of authors \cite{Spo}, \cite{Boh} independently found the threshold for $\tm_r(\gnp) = \Theta(n).$ Specifically, they prove that there exists an analytically computable constant $\psi_r$ such that if $c<\psi_r$, then $\tm_r(G(n,c/n)) = o(n)$ and if $c > \psi_r$ then $\tm_r(G(n,c/n)) = \Omega(n)$.

We prove\footnote{Essentially the same result was independently discovered by Dudek and Pra{\l}at \cite{DuPr}.} the following random analog of the fact that $\tm_r(K_n)\geq \frac{n}{r-1}$. 

\begin{theorem}\label{thm:randomlargemono}
For all $r\geq 2$ and sufficiently small $\ep>0$, there exists $C$ such that for $p\geq \frac{C}{n}$, a.a.s.~every $r$-coloring of $\gnp$ contains either a monochromatic tree of order at least $(1-\ep)n$ or a monochromatic tree with at least $(1-\ep)\frac{n}{r-1}$ leaves, which implies $\tm_r(\gnp )\geq (1-\ep)\frac{n}{r-1}$.
\end{theorem}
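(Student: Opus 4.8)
The plan is to mimic Gyárfás's proof that $\tm_r(K_n) \ge n/(r-1)$, replacing the use of complete-graph connectivity by the robust expansion properties of $\gnp$ when $p \ge C/n$. Recall the classical argument: given an $r$-coloring of $K_n$, build an auxiliary ``affine-plane-like'' structure on the color classes; one shows that there is a color $i$ and a partition of the vertex set into parts each of which is either a single $i$-colored component or can be merged, forcing a monochromatic component on at least $n/(r-1)$ vertices. Since in $\gnp$ a single color class need not be spanning or even connected, I will instead work with a \emph{reduced graph / near-spanning structure}: condition on the a.a.s.\ event that $\gnp$ is an expander in the sense that every subset $S$ with $|S| \le (1-\ep/2)n$ has $|N(S) \setminus S| \ge$ (some constant fraction of) $\min(|S|, n/(r-1))$, and that $\gnp$ has no small ``bad'' separators — both hold for $p \ge C/n$ with $C = C(\ep)$ large by standard first-moment/union-bound estimates on the number of subsets with small boundary.

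The key steps, in order. First, fix an $r$-coloring and, for each color $i$, consider the components of the color-$i$ subgraph; call a component \emph{big} if it has $\ge \ep n /(2r)$ vertices and \emph{small} otherwise. Second, argue that the big components cover all but $O(\ep n)$ vertices: any vertex $v$ not in a big component of any color has all $\deg(v)$ incident edges inside small components, but in $\gnp$ with $p \ge C/n$ a.a.s.\ every vertex $v$ lies in a connected subgraph that grows to linear size unless it is cut off by deleting $\Theta(n)$ vertices, so the number of such $v$ is $o(n)$ — this is where the expansion hypothesis is used. Third, restrict to the set $W$ of vertices lying in big components, $|W| \ge (1-\ep/3)n$, and set up the Gyárfás-style contraction argument on $W$: form an auxiliary bipartite incidence between ``colors'' and ``big components'', and observe that the big components of any fixed color partition $W$ into at most $r$ classes (actually we only need a bounded number). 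Fourth, apply the combinatorial core of Gyárfás's theorem (a double-counting / pigeonhole on how the big components of the $r$ colors interleave on $W$) to extract one big monochromatic component $T$, say in color $i$, together with a large set $U \subseteq W$ of vertices each of which sends an $i$-colored edge to $V(T)$; then either $|V(T)| \ge (1-\ep)n$ and we are done with the first alternative, or $|U \setminus V(T)|$ is large and $U$ contributes $\ge (1-\ep)n/(r-1)$ \emph{leaves} that can be attached to $T$, giving the second alternative. The final bookkeeping converts the $O(\ep n)$ loss from steps two and three into the $(1-\ep)$ factor.

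The main obstacle I anticipate is step two together with making the Gyárfás contraction rigorous on the non-complete host graph: in $K_n$ every pair of vertices in ``the same part'' is automatically adjacent, which is exactly what lets Gyárfás merge parts into a single monochromatic component, whereas in $\gnp$ two vertices in the same candidate part may be non-adjacent, so I cannot directly contract. The fix is to only ever merge along \emph{edges that are actually present}, and to use expansion to guarantee that within any linear-sized candidate part the color-$i$ subgraph is itself connected (or connected after discarding an $\ep$-fraction): concretely, a linear-sized vertex set in which one color, say $i$, is ``dense enough'' will have its $i$-colored subgraph spanning all but $o(n)$ of it, because otherwise that set would split into two linear pieces with few $i$-edges between them, and one checks via a union bound over bipartitions that a.a.s.\ no such configuration exists for $p \ge C/n$. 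Once this ``local connectivity from expansion'' lemma is in place, the rest of Gyárfás's counting goes through verbatim with $\ep$-losses, and the monochromatic-tree-with-many-leaves alternative is exactly what absorbs the vertices that the contraction cannot reach. I would also remark that the two alternatives in the statement are genuinely needed: when the extremal color class is ``star-like'' we get the leaves alternative rather than a genuinely large component, and the hypothesis $p \ge C/n$ (rather than a weaker bound) is what makes the expansion estimates in steps two and the local-connectivity lemma hold a.a.s.
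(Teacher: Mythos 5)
Your proposal takes a genuinely different route from the paper's, but it has a gap that I do not see how to repair.

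The paper proves this via the sparse regularity lemma: apply Lemma~\ref{lem:ssrlc} to the coloring, pass to a cleaned-up reduced graph $\Gamma'$ with minimum degree $(1-o(1))|\Gamma'|$, apply the \emph{dense} minimum-degree result (Theorem~\ref{thm:mindeglargemono}, whose engine is the Liu--Morris--Prince double-star lemma) to $\Gamma'$, and then lift the resulting reduced-graph tree back to $\gnp$ via Lemma~\ref{lem:leafytree}, which turns a weakly-regular pair into a spanning tree with almost all of each side as leaves. The crucial point is that regularity converts the sparse problem into a dense one on the reduced graph, where the Gy\'arf\'as-style double-counting actually produces a \emph{linear}-order structure. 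Your plan instead tries to run Gy\'arf\'as's argument directly on $\gnp$, substituting edge-expansion for completeness. This is a reasonable instinct, but the two places where you invoke expansion do not survive the adversarial coloring, and there is a quantitative obstruction you do not address.

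Concretely, your ``local connectivity from expansion'' lemma is false: for a linear set $S$ and a cut $S=A\cup B$ with $|A|,|B|=\Theta(n)$, expansion of $\gnp$ guarantees $\Theta(pn^2)$ edges across the cut, but it says nothing about how they are colored -- the adversary can color \emph{every} crossing edge with a color $j\neq i$, so ``color $i$ is dense on $S$'' does not force $G_i[S]$ to be nearly connected, and no union bound over bipartitions of the (uncolored) random graph can rule this out. The same issue afflicts step two: the statement that every vertex ``lies in a connected subgraph that grows to linear size unless cut off by $\Theta(n)$ vertices'' is a property of $G$, not of the monochromatic subgraphs $G_i$, so it does not imply that the big monochromatic components cover $(1-O(\eps))n$ vertices. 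Finally, there is a counting obstruction your outline never confronts: at $p=\Theta(1/n)$ the bipartite graph between any two linear sets $X,Y$ has only $\Theta(p|X||Y|)=\Theta(n)$ edges, so the LMP/double-star pigeonhole applied directly to $\gnp$ yields a tree of order $\Theta(pn)$, not $\Theta(n)$. Recovering a linear-order monochromatic tree with linearly many leaves from $\Theta(n)$ edges is exactly what requires either the regularity reduction (as in the paper) or a substantially different mechanism for attaching leaves (as in the direct arguments of Dudek and Pra{\l}at or Kohayakawa--Mota--Schacht); your proposal supplies neither.
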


Again, it is interesting to compare Theorem \ref{thm:randomlargemono} to the corresponding deterministic version, as our result implies that almost every graph $G$ satisfies $\tm_r(G)\geq (1-\ep)\frac{n}{r-1}$, which is much larger than the bound of $\frac{n}{(r-1)\alpha(G)}$ given by F\"uredi's result for which there are examples showing tightness.

\section{Overview and notation}

\subsection{Overview}
We consider large minimum degree versions and random versions of some classic results for edge colored complete graphs.  In certain cases we will use the large minimum degree results together with the sparse regularity lemma to obtain results for random graphs.  In these cases our approach is as follows:  First, prove that edge colored graphs of high minimum degree contain (a robust version of) the desired structure.  Second, applying the sparse regularity lemma to the random graph gives a reduced graph with high minimum degree and thus we can apply the high minimum degree result.  This structure in the reduced graph corresponds to an approximate spanning structure in the original graph. As a simple application of this approach we obtain Theorem \ref{thm:randomlargemono}.  

A less standard application of this approach is given in the proof of Theorem \ref{thm:gnp_threshold}(iii) where we are trying to improve the exponent from $1/(r+1)$ to $1/r$.  We use the method of multiple exposures to build a tree cover while maintaining a set of vertices which are leaves in each of the monochromatic trees.  On each step, the leaf set shrinks by a factor of $p$ and at the end of the possibly $r$ steps, we require the leaf set to contain more than $\log n$ vertices.  By using sparse regularity together with the large minimum degree result we are able to begin this process with a tree having $\Theta(n)$ leaves as opposed to the $\Theta(pn)$ leaves we would be able to guarantee without sparse regularity.

In Section \ref{ssec:examples}, we provide examples of graphs and colorings which give lower bounds on $\tc_r(G)$ (and hence $\tp_r(G)$).  In Section \ref{ssec:distinct} we consider the variant where we require that the components in the cover must be of distinct colors. In Section \ref{ssc:comneigh}, we give a simple upper bound on $\tc_r(G)$ and prove a result about graphs in which every $r+1$ vertices have a common neighbor.

Section \ref{sec:mindeg} is devoted to proving the large minimum degree versions (including complete versions) of our results.  In Section \ref{ssec:part} we prove Theorems \ref{thm:completepartition} and \ref{thm:mindegpartition}. The first provides a slight improvement on the bound in Theorem \ref{thm:HK} and the second extends the theorem to graphs with large minimum degree.  In Section \ref{ssec:cov} we prove Theorem \ref{thm:2colorcover}, which provides a tight minimum degree condition on $G$ such that $\tc_2(G)\le 2.$  In Section \ref{ssec:lmcdeg}, we prove that $r$-colored graphs with large minimum degree have large monochromatic components.  

In Section \ref{sec:srlba}, we continue with the second step of the method described above by stating the sparse regularity lemma of Kohayakawa \cite{Koh} and R\"{o}dl (see \cite{Con})  as well as collecting various lemmas which will be useful for the proof. Lemma \ref{lem:linsizecommonleaves} shows that sparse edge colored random graphs have nearly spanning ``robust'' tree partitions.   

In Section \ref{ssc:randomlem} we deduce some properties of $\gnp$ which will be used in Section \ref{sec:mainthmpf}.

In Section \ref{sec:mainthmpf} we prove Theorem \ref{thm:gnp_threshold} and \ref{thm:gnp_threshold_low}.  Theorem \ref{thm:gnp_threshold}(ii) and Theorem \ref{thm:gnp_threshold_low}(i),(ii) will follow from the results of Section \ref{ssc:randomlem}.  For Theorem \ref{thm:gnp_threshold}(i), we are able to exploit the fact that there are only two colors to improve the general result of Theorem \ref{thm:gnp_threshold}(ii).  The proof of Theorem \ref{thm:gnp_threshold}(iii) is discussed in the second paragraph of this section. Finally, in Section \ref{ssec:lmc}, we prove Theorem \ref{thm:randomlargemono}.


In Section \ref{sec:conc} we collect some conjectures and open problems. 

\subsection{Notation}
We use the following notation throughout the paper. As usual, $N(v)$ represents the neighborhood of $v$ and as we deal mainly with colored graphs, if $c$ is a color then  $N_c(v)$ represents the neighborhood of $v$ in the subgraph of $c$ colored edges and $\deg_c(v) = |N_c(v)|$.  If $S$ is a set of vertices, then  $N_c(v,S) = N_c(v)\cap S$ and $\deg_c(v,S) = |N_c(v,S)|$. 
We let $N^{\cap}(S)=\bigcap_{v\in S}N(v)$ and $N^\cup(S)=\bigcup_{v\in S}N(v)$.
For two sets of vertices $X$ and $Y$, $e(X,Y)$ represents the number of edges with one endpoint in $X$ and the other in $Y$. 
For two sequences $a_n, b_n$, we write $a_n =o(b_n)$  if $a_n/b_n \to 0$ as $n\to \infty$ and $a_n = \omega(b_n)$ if $a_n /b_n \to \infty$  as $n\to \infty.$  
For constants $a$ and $b$, we write $a \ll b$ to mean that given $b$, we can choose $a$ small enough so that $a$ satisfies all of necessary conditions throughout the proof.  More formally, we say that a statement holds for $a\ll b$ if there is a function $f$ such that it holds for every $b$ and every $a\le f(b)$ 
In order to simplify the presentation, we will not determine these functions explicitly.  We will ignore floors and ceilings when they are not crucial to the calculation.  Logarithms are assumed to be base $e$ unless otherwise noted.

\section{Examples and observations}\label{sec:examples}

\subsection{Lower bounds on the tree cover number}\label{ssec:examples}

In this section we provide examples which give lower bounds on $\tc_r(G)$ in various settings considered throughout the paper.  We remind the reader that for all $r\geq 1$ and all graphs $G$, $\tp_r(G)\geq \tc_r(G)$.

\begin{observation}\label{obs:alphar}
Let $r\geq 1$.  For all graphs $G$, if $\alpha(G)\geq r$, then $\tc_r(G)\geq r$.  In particular, there exists a graph $G$ with $\delta(G)\geq n-r$ with $\tc_r(G)\geq r$.
\end{observation}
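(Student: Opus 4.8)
The plan is to exhibit, for an arbitrary graph $G$ with $\alpha(G)\ge r$, a single $r$-coloring of $E(G)$ that cannot be covered by fewer than $r$ monochromatic components, and then to point to a specific graph with $\delta(G)\ge n-r$ to which this applies.

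First I would fix an independent set $I=\{v_1,\dots,v_r\}$, which exists since $\alpha(G)\ge r$. I define the coloring as follows: every edge with an endpoint in $I$ has exactly one endpoint in $I$ (because $I$ is independent), say $v_i$; color such an edge with color $i$. Color every remaining edge (both endpoints outside $I$) with color $1$. This assigns one of the colors $\{1,\dots,r\}$ to every edge. The key feature is that \emph{every} edge incident to $v_i$ has color $i$; hence in the subgraph of color-$i$ edges, each $v_j$ with $j\ne i$ is an isolated vertex.

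Next I would show that no monochromatic component contains two vertices of $I$. Suppose some component of the color-$c$ subgraph contains both $v_i$ and $v_j$ with $i\ne j$. If $c\ne i$, then $v_i$ is isolated in color $c$, so its color-$c$ component is the single vertex $\{v_i\}$, which cannot also contain $v_j$ — a contradiction. Hence $c=i$; but then $v_j$ is isolated in color $i$, so it cannot lie in the same color-$i$ component as $v_i$ — again a contradiction. Therefore each monochromatic component meets $I$ in at most one vertex, so any collection of monochromatic components covering $V(G)$ must contain at least $|I|=r$ of them; that is, $\tc_r(G)\ge r$. For the ``in particular'' statement I would take $G$ to be the join of $K_{n-r}$ with the empty graph on $r$ vertices (equivalently $K_n$ with all edges inside a fixed $r$-set deleted): this graph has $\delta(G)=n-r\ge n-r$ and contains an independent set of size $r$, so the argument above gives $\tc_r(G)\ge r$.

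I do not expect a real obstacle here; the only point needing care is that singleton subgraphs count as monochromatic components in a cover, which is precisely why the coloring is designed so that each $v_i$ is isolated in all colors but one — this is what forces every component to capture at most one vertex of $I$.
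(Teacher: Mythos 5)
Your proof is correct and takes essentially the same approach as the paper: fix an independent set of size $r$, color each edge incident to $v_i$ with color $i$, and observe that no two vertices of the independent set can share a monochromatic component, forcing at least $r$ components in any cover. You simply spell out the case analysis the paper compresses into one sentence, and you explicitly supply $K_n$ minus a clique on $r$ vertices for the ``in particular'' part.
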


\begin{proof}
Choose an independent set $\{x_1,\dots, x_r\}$ and color every edge incident to $x_i$ with color $i$, then color the remaining edges arbitrarily.  None of the vertices $x_1,\dots, x_r$ are in a tree of the same color.
\end{proof}

\begin{observation}\label{obs:need_s_trees}
Let $G=(V, E)$ be a graph.  For all $1\leq r\leq s$, if $G$ contains an independent set $X$ of size $s$ such that every vertex in $V\setminus X$ has at most $r-1$ neighbors in $X$ and $X$ is not a dominating set, 
then $\tc_r(G)>s$.
\end{observation}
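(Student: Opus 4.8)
The plan is to exhibit one $r$-coloring of $G$ that witnesses $\tc_r(G)>s$. First I would use the hypothesis that $X$ is not dominating to fix a vertex $z\in V\setminus X$ with $N(z)\cap X=\emptyset$. Then I would color the edges of $G$ as follows: every edge lying inside $V\setminus X$ gets color $1$; and for each $v\in V\setminus X$, the at most $r-1$ edges joining $v$ to $N(v)\cap X$ get pairwise distinct colors chosen from $\{2,\dots,r\}$. This is well defined because $X$ is independent — so every edge of $G$ is either inside $V\setminus X$ or meets $X$ in exactly one endpoint and $V\setminus X$ in a unique endpoint that dictates its color — and it is possible because $|N(v)\cap X|\le r-1$ is exactly the number of colors in $\{2,\dots,r\}$.

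Next I would establish two structural facts about this coloring. (Fact~1) Color $1$ never appears on an edge incident to $X$, so each color-$1$ component is a connected component of $G[V\setminus X]$ and hence disjoint from $X$; since every edge at $z$ has color $1$, it follows that no monochromatic connected subgraph containing $z$ meets $X$. (Fact~2) Every monochromatic connected subgraph contains at most one vertex of $X$: for color $1$ it contains none; for a color $c\in\{2,\dots,r\}$, every color-$c$ edge joins $X$ to $V\setminus X$, so a connected color-$c$ subgraph is bipartite with one side in $X$, and if it contained two vertices of $X$ then some internal vertex $v\in V\setminus X$ of an $X$–$X$ path in it would be incident to two color-$c$ edges going to $X$, contradicting the construction.

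To finish, suppose $H_1,\dots,H_t$ with $t\le s$ is a cover of $V$ by monochromatic connected subgraphs. By Fact~2 the assignment of each $x\in X$ to an index $j$ with $x\in V(H_j)$ is injective, so $t\ge|X|=s$; hence $t=s$ and each $H_j$ contains exactly one vertex of $X$. But then the $H_j$ that covers $z$ also contains a vertex of $X$, contradicting Fact~1. Thus no such cover exists and $\tc_r(G)>s$. The only place that really needs care is the $X$–$X$ path argument in Fact~2, which is where independence of $X$ (giving the bipartite/alternating structure) and the per-vertex injectivity of the coloring are both used; I would also note that the degenerate case where $z$ is isolated in $G$ causes no trouble, since then $z$ can only be covered by the singleton $\{z\}$, which carries no vertex of $X$ and the same contradiction applies.
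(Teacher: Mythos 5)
Your proof is correct and is essentially the same as the paper's: you use the identical coloring (monochromatic $G[V\setminus X]$, with each $v\in V\setminus X$ sending its edges to $X$ in distinct remaining colors, up to a relabeling of which color plays the special role), show that each of the $s$ vertices of $X$ must lie in a distinct component, and then use the non-dominated vertex $z$ to force an $(s+1)$-st component. Your write-up is somewhat more explicit than the paper's — in particular in spelling out the alternating $X$--$(V\setminus X)$ path argument behind Fact~2 and in noting the harmless singleton/isolated-vertex edge cases — but the idea is the same.
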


\begin{proof}
Start by coloring every edge in $G[V\setminus X]$ with color $r$.  The only edges not yet colored are those going between $V\setminus X$ and $X$.  Since each $v\in V\setminus X$ is incident with at most $r-1$ such edges, we can assign colors so that no vertex in $V\setminus X$ is incident with more than one edge of color $i$ for any $i\in [r-1]$.

To see that $G$ cannot be covered with $s$ monochromatic trees, note that for any pair $x,x'\in X$, $x$ and $x'$ must be in different trees; this follows since $x$ and $x'$ are not incident with any edges of color $r$ and $x$ and $x'$ have no neighbors of the same color (by the way colors were assigned to edges from $V\setminus X$ to $X$), so there are no monochromatic paths from $x$ to $x'$.  Furthermore, since $X$ is not a dominating set, there must exist at least one tree of color $r$ (since every edge in $G[V\setminus X]$ has color $r$).  This implies that $tc_r(G)\geq s+1$.  
\end{proof}

\begin{example}
\label{obs:tcrlb}
For $r\geq 1$ and $n\geq 2r+2$, there exists a graph $G$ on $n$ vertices with $\delta(G)=\ceiling{\frac{r(n-r-1)+1}{r+1}}-1$ such that $\tc_r(G)>r$.
\end{example}

\begin{proof}
There exist unique integers $m$ and $q$ such that $n=(r+1)m+q$ with $0\leq q\leq r$.  Set aside vertices $u_1,\dots, u_{r+1}$ and then equitably partition the remaining $n-(r+1)$ vertices into sets $V_1,\dots, V_{r+1}$; that is, partition the remaining vertices into sets $V_1,\dots, V_{r+1}$ so that $|V_1|=\dots=|V_{r+1-q}|=m-1$ and if $q\geq 1$, $|V_{r+1-q+1}|=\dots=|V_{r+1}|=m$.

Now add the following colored edges: 
\begin{itemize}
\item $u_i\sim V_{j}$ in color $i$ for $1\le i< j \le r+1$ 
\item $u_i\sim V_{j}$ in color $i-1$ for $1\le j< i\le r+1 $
\item $V_i \sim V_j$ in color $r$ for $1\le i<j \le r$
\item $V_{r+1} \sim V_i$ in color $1$ for $2\le i\le r$
\item $V_i\sim V_i$ with arbitrary colors for all $i$
\end{itemize}
Note that if $i\neq j$, then $u_i$ and $u_j$ cannot be in the same monochromatic component. Among $u_1,\ldots, u_{r+1}$, color $i$ only appears incident to $u_i$ and $u_{i+1}$ but their neighborhoods in color $i$ are disjoint; these neighborhoods remain disjoint in color $i$ even when the edges between the $V_i$'s are considered. 

By construction, it is clear that $u_{r+1}$ has the smallest degree of the $u_i$ and vertices in $V_1$ have the smallest degree of the vertices in the $V_i$. Note that $\deg(u_{r+1}) = |V_1|+\cdots+|V_r|$ which is $r(m-1)$ if $q =0$ and $r(m-1)+q-1$ if $q=1,\ldots, r$. Now since $n=(r+1)m+q$, we have 
\[\ceiling{\frac{r(n-r-1)+1}{r+1}}-1 = r(m-1)+ \cefrac{rq+1}{r+1} -1.\]
Since $q \ge \frac{rq+1}{r+1} >q-1$ for all $1\le q \le r$ and $\cefrac{1}{r+1}=1$, $u_{r+1}$ satisfies the claimed degree condition for all $0\le q\le r$. If $v_1\in V_1$, then $\deg(v_1) \ge (r-1)(m-1) + (m-2) + r = r(m-1) + r-1 \ge \deg(u_{r+1})$. 
\begin{figure}[!htb]
    \centering
    \begin{minipage}{.5\textwidth}
        \centering
        \includegraphics[scale=1]{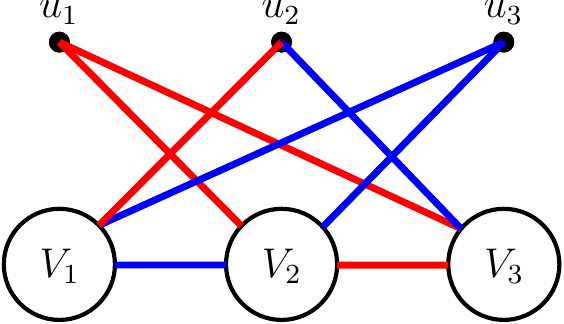}

    \end{minipage}%
    \begin{minipage}{0.5\textwidth}
        \centering
        \includegraphics[scale=.7]{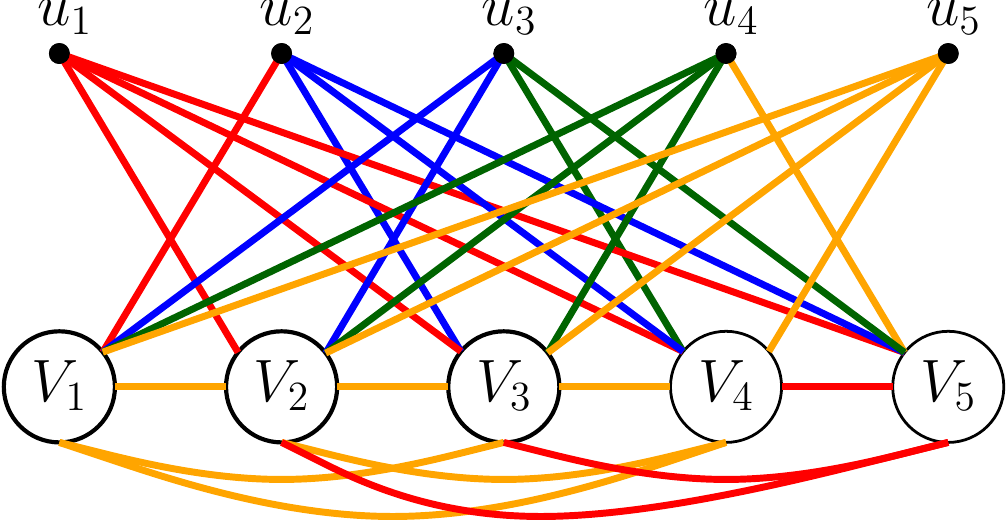}
    \end{minipage}
     \caption{Graphs with $\tc_2(G)>2$ and $\tc_4(G)>4$ respectively.}
\end{figure}

\end{proof}

\subsection{Covering with trees of distinct colors}\label{ssec:distinct}

\begin{definition} Let $G$ be a (multi)graph.
 Say $G$ has property $\cT\cP_r\, (\cT\cC_r)$ if in every $r$-coloring of the edges of $G$ there is a \emph{partition} \emph{(cover)} of $V(G)$ with at most $r$ trees of \emph{distinct} colors.
\end{definition}

Next we provide examples of graphs which cannot be covered by $r$ trees of distinct colors. We note however, that these graphs can be partitioned into just two components of the same color.

\begin{example}\label{ex:tcrG}
For all $r\geq 1$ and $n\geq 2^r$, there exists a graph $G$ on $n$ vertices with $\delta(G)= \floor{(1-\frac{1}{2^r})n}-1$ such that $G$ does not have property $\cT\cC_r$.
\end{example}

\begin{proof}
We construct a graph $G$ with $n = m\cdot 2^r$ vertices.  The vertices are partitioned into $2^r$ sets of size $m$.  We index these sets by binary strings of length $r$. So $V = \dot{\bigcup}_{\tbf{b}\in\set{0,1}^r}V_{\tbf{b}}$. Every vertex within such a set will also be referred to by the index of its set. For a binary string $\tbf{b}$, let $\neg\tbf{b}$ represent the string with all bits flipped.

Include every edge between vertices which agree on at least one index. So $V_{\tbf{b}}\sim V_{\tbf{b}'}$ iff $\tbf{b}\neq \neg\tbf{b}'$ (all edges within the $V_\tbf{b}$ are present as well). This graph has $\delta(G) = n-1-m = (1-\frac{1}{2^r})n -1$. Now color each edge with the smallest coordinate on which the endpoints indices agree. For example, two vertices with indices $(0,1,0,0)$ and $(1,0,0,1)$, would be connected by an edge of color 3.

We claim that $G$ cannot be covered by $r$ monochromatic components of distinct colors.  Any connected subgraph of color $i$ can only contain vertices which agree on the $i$th coordinate. Suppose the component of color $i$ only covers vertices with $b_i$ in the $i$th component. Then the vertices with index $\neg(b_1,\ldots, b_r)$ are not covered by any of the components.
When $n = m\cdot 2^r +q$  with $q < 2^r$, we proceed in the same way, but partition the vertices into $q$ sets of size $m+1$ and $2^r-q$ sets of size $m$. 

\end{proof}

\subsection{Simple upper bounds on the tree cover number}\label{ssc:comneigh}

\begin{observation}\label{rysertrivial}
Let $r\geq 1$. For all (multi)graphs $G$, $\tc_r(G)\leq r\alpha(G)$.
\end{observation}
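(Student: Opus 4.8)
The plan is to exhibit an explicit cover of $V(G)$ by at most $r\alpha(G)$ monochromatic components, organized around a single maximal independent set. First I would fix an $r$-edge-coloring of $G$ and choose an inclusion-maximal independent set $X\subseteq V(G)$; since $X$ is independent, $|X|\le\alpha(G)$. For each vertex $x\in X$ and each color $i\in[r]$, let $T_{x,i}$ be the (unique) monochromatic component in color $i$ containing $x$ — a monochromatic connected subgraph, possibly the trivial one-vertex tree $\{x\}$ when $x$ is incident to no edge of color $i$. The collection $\{T_{x,i}:x\in X,\ i\in[r]\}$ then has at most $r|X|\le r\alpha(G)$ members.

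It remains to verify that these components cover $V(G)$. Any $v\in X$ lies in $T_{v,i}$ for every color $i$. Any $v\notin X$ has, by maximality of $X$, a neighbor $x\in X$; letting $i$ be the color of the edge $vx$, the vertices $v$ and $x$ lie in a common monochromatic component of color $i$, so $v\in T_{x,i}$. Hence every vertex is covered and $\tc_r(G)\le r\alpha(G)$. The only points needing any care are that $X$ should be taken merely maximal (not of maximum size), so that the count is bounded by $\alpha(G)$ while every vertex outside $X$ still has a neighbor in $X$, and that single-vertex trees are counted as monochromatic components; I do not expect any genuine obstacle, and the argument goes through verbatim for multigraphs.
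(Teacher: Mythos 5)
Your proof is correct and takes essentially the same approach as the paper: fix an independent set that dominates the graph and cover with the $r$ monochromatic pieces through each of its vertices. The paper takes a maximum (hence automatically maximal) independent set and uses monochromatic stars rather than full components, but these are only cosmetic differences.
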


\begin{proof}
Let $a:=\alpha(G)$ and let $X=\{x_1, \dots, x_a\}$ be a maximum independent set.  So every vertex in $V(G)\setminus X$ has a neighbor in $X$.  Taking the stars centered at $x_1,\dots, x_a$ gives a collection of at most $r\alpha(G)$ monochromatic components which cover $V(G)$.
\end{proof}

\begin{proposition}\label{prop:comneigh}
Let $r\geq 2$ and let $G$ be a graph having the property that every set of $r+1$ vertices have a common neighbor, then $\tc_r(G)\leq r^2$.
\end{proposition}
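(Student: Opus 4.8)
The plan is to build a small cover greedily, using the common-neighbor hypothesis to absorb leftover vertices. Fix an $r$-coloring of $G$. First I would try to find a monochromatic component that covers almost everything, and if that fails, produce a small independent-like set $S$ with $|S|\le r+1$ whose vertices are spread across many colors, then exploit the common neighbor of $S$ together with Observation~\ref{rysertrivial}-style star arguments. More concretely: start with any vertex $v_1$ and consider its $r$ monochromatic neighborhoods $N_c(v_1)$. The stars centered at $v_1$ cover $N^\cup(\{v_1\})$; repeat greedily, picking at each step a vertex $v_{i+1}$ not yet covered by the stars chosen so far. If this process runs for at least $r+1$ steps, we obtain vertices $v_1,\dots,v_{r+1}$ such that no $v_j$ lies in a neighborhood $N_c(v_i)$ for $i<j$ — in particular the $v_i$ form an independent set (no edge, since an edge would put one in a neighborhood of the other in its color). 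By hypothesis $\{v_1,\dots,v_{r+1}\}$ has a common neighbor $w$; the edge $wv_i$ has some color $c_i\in[r]$, so by pigeonhole two indices, say $i<j$, satisfy $c_i=c_j$, giving a monochromatic path $v_i\,w\,v_j$ — contradicting that $v_j\notin N_{c_i}(v_i)\cup\dots$. Wait: $v_j$ is only guaranteed to avoid the neighborhoods of the *earlier* $v_i$'s, and $w$ is a later-introduced vertex, so I need to be more careful about what "covered" means; the cleaner statement is that $v_j$ is not adjacent to $v_i$, and I must derive the contradiction from the cover structure rather than from adjacency alone.

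So let me restructure. The greedy process should instead be: maintain a partial cover $\mathcal{C}$ by monochromatic stars; while some vertex $x$ is uncovered, if $x$ has $\le r+1$ uncovered "witnesses" we can finish, otherwise add a new star center. The key numeric fact to establish is that the process terminates after at most $r$ center choices, OR else we find $r+1$ vertices pairwise in distinct-color-forced components, contradicting the common neighbor property. Since $r+1$ vertices with a common neighbor $w$ must have two of the edges to $w$ sharing a color, any set of $r+1$ vertices must contain two that are joined by a monochromatic path of length $2$; so no $r+1$ vertices can be "pairwise monochromatically separated." This caps at $r$ the number of distinct monochromatic-component "classes" we are forced to open, and each class contributes at most $r$ stars (one per color available at that center), giving $\tc_r(G)\le r\cdot r=r^2$.

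The main obstacle I expect is making the greedy/absorption bookkeeping precise: specifically, ensuring that each "new center" genuinely forces a new obstruction (so the count is bounded by $r$) while simultaneously controlling how many trees each center spawns, so that the product is exactly $r^2$ and not something larger. One clean way to handle this is to pick centers $v_1,\dots,v_k$ so that for $i<j$, $v_j$ is in no monochromatic component built from $\{v_1,\dots,v_{j-1}\}$ — then argue via the common neighbor that $k\le r$ — and to observe that the stars centered at a single $v_i$, one per color class, cover $N^\cup(\{v_i\})$ using at most $r$ trees; the vertices $v_1,\dots,v_k$ themselves are covered by these same stars (each $v_i$ for $i>1$ lies in some $N_c(v_{i'})$ with $i'<i$ by the greedy choice... or is itself a center). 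Finishing the argument then amounts to verifying every vertex of $G$ is covered: an uncovered vertex at the end of the process would be a legal $(k+1)$-th center, so the process only stops when $V(G)$ is covered, and it must stop by step $r$. This yields $\tc_r(G)\le r^2$.
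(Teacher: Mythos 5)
Your argument is correct and is essentially the paper's proof, phrased without the explicit auxiliary construction. The paper defines a multigraph $H$ on $V(G)$ in which $u\sim v$ in color $i$ iff they are joined by a monochromatic $i$-path in $G$, observes that $\alpha(H)\le r$ by exactly the common-neighbor-plus-pigeonhole step you give, and then applies Observation~\ref{rysertrivial} to $H$; your greedy choice of centers $v_1,\dots,v_k$ that are pairwise not in any common monochromatic component is precisely a maximal independent set in $H$, so the two arguments coincide. One small wording note: where you say ``stars'' covering $N^\cup(\{v_i\})$, you should say the (at most $r$) full monochromatic components of $G$ that contain $v_i$ — stars in $G$ only reach neighbors and would not cover everything, but the components do, and that is also what a ``star in $H$'' unpacks to. With that substitution your bookkeeping (at most $r$ centers, at most $r$ components each) is exactly right.
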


\begin{proof}
Consider any $r$-coloring of $G$ and let $H$ be an $r$-colored auxiliary (multi)graph on $V(G)$ where $uv\in E(H)$ of color $i$ if and only if there is a path in $G$ of color $i$ from $u$ to $v$.  Since every set of $r+1$ vertices of $G$ have a common neighbor and there are at most $r$ colors, this implies $\alpha(H)\leq r$.  Thus by Observation \ref{rysertrivial}, we have $\tc_r(H)\leq r\alpha(H)\leq r^2$.  Note that a monochromatic component in $H$ corresponds to a monochromatic component in $G$ giving the result.
\end{proof}

\section{Monochromatic trees in graphs with large minimum degree} \label{sec:mindeg}

\subsection{Partitions}
\label{ssec:part}

We start by proving a lemma which we will use in the proofs of Theorems \ref{thm:completepartition}, \ref{thm:mindegpartition}, and  \ref{thm:gnp_threshold}(iii). 

\begin{lemma}\label{lem:Ypartition}
Let $k\geq 2$.  If $G$ is an $Y,Z$-bipartite graph such that for all $v\in Z$, $\deg(v, Y)>k\log|Z|$, then in every $k$-coloring of the edges of $G$, there exists a partition $\{Y_1, \dots, Y_k\}$ of $Y$ such that for all $v\in Z$, there exists $i\in [k]$ such that $N_i(v)\cap Y_i\neq \emptyset$.
\end{lemma}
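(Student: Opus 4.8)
The plan is to use a straightforward probabilistic argument: color the vertices of $Y$ randomly and independently, and show that with positive probability the resulting partition has the desired property.

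Concretely, assign to each vertex of $Y$ one of the $k$ colors uniformly at random and independently, and let $Y_i$ be the set of vertices assigned color $i$. Fix $v\in Z$, and for $i\in[k]$ write $d_i=\deg_i(v,Y)=|N_i(v)|$, so that $\sum_{i\in[k]}d_i=\deg(v,Y)$. Say $v$ is \emph{bad} if $N_i(v)\cap Y_i=\emptyset$ for every $i\in[k]$. The key observation is that the sets $N_1(v),\dots,N_k(v)$ are pairwise disjoint (each edge at $v$ has exactly one color), so the events ``$N_i(v)\cap Y_i=\emptyset$'' for different $i$ depend on disjoint sets of random choices and are therefore mutually independent; moreover each has probability $(1-1/k)^{d_i}$. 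Hence
\[
\Pr{v \text{ is bad}}=\prod_{i\in[k]}\of{1-\tfrac1k}^{d_i}=\of{1-\tfrac1k}^{\deg(v,Y)}<\of{1-\tfrac1k}^{k\log|Z|}\le e^{-\log|Z|}=\frac1{|Z|},
\]
using $\deg(v,Y)>k\log|Z|$ and $(1-1/k)^k\le e^{-1}$.

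A union bound over the (at most $|Z|$) vertices of $Z$ then gives $\Pr{\text{some } v\in Z \text{ is bad}}<|Z|\cdot\frac1{|Z|}=1$, so there exists an outcome in which no $v\in Z$ is bad; the corresponding $\{Y_1,\dots,Y_k\}$ is the required partition. (The degenerate cases $Z=\emptyset$, or $|Z|=1$ where the hypothesis just says $\deg(v,Y)\ge1$, are handled by the same computation.)

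I do not anticipate a real obstacle here; the only point that needs care is the independence step, i.e.\ recording explicitly that distinct color classes $N_i(v)$ occupy disjoint vertex sets so that the per-color failure probabilities multiply, together with the clean inequality $(1-1/k)^k\le 1/e$ to convert the degree bound $k\log|Z|$ into the factor $1/|Z|$ needed for the union bound.
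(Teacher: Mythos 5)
Your proof is correct and follows essentially the same probabilistic argument as the paper: randomly color $Y$, bound the per-vertex failure probability by $(1-1/k)^{\deg(v,Y)} < 1/|Z|$, and apply a union bound. The only difference is that you spell out the disjointness/independence observation justifying the product formula, which the paper leaves implicit.
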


\begin{proof}
Randomly color the vertices of $Y$ with colors from $[k]$, giving us a partition $\{Y_1,\dots, Y_k\}$ of $Y$ (with possibly empty parts).  The probability that some vertex $v\in Z$ has $N_i(v)\cap Y_i=\emptyset$ for all $i\in [k]$ is 
$$(1-\frac{1}{k})^{\deg(v, Y)}<(1-\frac{1}{k})^{k\log|Z|}\leq e^{-k\log|Z|/k}=\frac{1}{|Z|}.$$ 
So by the union bound the probability of at least one failure is less than 1, and thus there exists a partition of $Y$ with the desired property.
\end{proof}


We now prove Theorem \ref{thm:completepartition} which says that $\tp_r(K_n)\leq r$ holds provided $n\geq 3r^2r!\log r$ (improving the lower bound of $n\geq \frac{3r^4r!\log r}{(1-1/r)^{3(r-1)}}$ from Theorem \ref{thm:HK})
and illustrates the idea for both Theorem \ref{thm:mindegpartition} and Theorem \ref{thm:gnp_threshold}(iii).  We note that our proof follows a similar procedure as the proof in \cite{HK}, except that at the end of the process we use Lemma \ref{lem:Ypartition} instead of a greedy algorithm.

\begin{proof}[Proof of Theorem \ref{thm:completepartition}]
\noindent
\textbf{Step 1:} Let $x_1\in V(G)$ and let $Y_1$ be the largest monochromatic neighborhood of $x_1$, say the color is $1$.  Note that $|Y_1|\geq (n-1)/r$.  If every vertex in $V\setminus Y_1$ has a neighbor of color $1$ in $Y_1$, then stop as we would already have the desired tree partition.  So some vertex $x_2$ has at least $\frac{1}{r-1}|Y_1|$ neighbors of say color $2$ in $Y_1$.  Set $Y_2:=Y_1\cap N_2(x_2)$.  

For $2\leq i\leq r-1$, assuming $Y_i$ has already been defined, we do the following: if for all $v\in V\setminus Y_{i}$, $|(\bigcup_{j=1}^iN_j(v))\cap Y_i|>i\log n,$ 
then set $k:=i$, $Y:=Y_k$, and $Z=V\setminus (\{x_1,\dots, x_k\}\cup Y_k)$ then proceed to Step 2. Otherwise some vertex $x_{i+1}\in V\setminus Y_{i}$ has at most $i\log n$ neighbors having colors from $[i]$ in $Y_{i}$ and thus $x_{i+1}$ has at least $\frac{1}{r-i}(|Y_{i}|-i\log n)$ neighbors of color say $i+1$, in $Y_{i}$.  Set $Y_{i+1}:=Y_{i}\cap N_{i+1}(x_{i+1})$.  Continue in this manner until we go to Step 2 or until $Y_r$ has been defined.  After we complete the $i=(r-1)$-th step, we have  
$$|Y_r|\geq \frac{n-1}{r!}-\log n\sum_{j=1}^{r-2}\frac{r-j}{j!}\geq r\log n$$
where the last inequality holds provided $\frac{n}{\log n}\geq r!\sum_{j=0}^{r-2}\frac{r-j}{j!}$.  Recall that $n\geq 3r^2r!\log r$ and note that $\sum_{j=0}^{r-2}\frac{r-j}{j!}\leq er$.  For $r\geq 6$, we have $\frac{n}{\log n}\geq err!\geq r!\sum_{j=0}^{r-2}\frac{r-j}{j!}$, and for $2\leq r\leq 5$ we directly verify $\frac{n}{\log n}\geq r!\sum_{j=0}^{r-2}\frac{r-j}{j!}$.  Now set $k:=r$, $Y:=Y_k$, and $Z=V\setminus (\{x_1,\dots, x_r\}\cup Y_r)$ then proceed to Step 2.

\noindent
\textbf{Step 2:}
Note that for all $v\in Z$, $|(\bigcup_{j=1}^kN_j(v))\cap Y_k|\geq k\log n>k\log|Z|$.  Thus we may apply Lemma \ref{lem:Ypartition} to get a partition $\{Y_1, \dots, Y_k\}$ of $Y$ such that for all $v\in Z$, there exists $i\in [k]$ such that $N_i(v)\cap Y_i\neq \emptyset$.  
Let each $v\in Z$ choose an arbitrary such $i$ and an arbitrary neighbor in $N_i(v) \cap Y_i$. Then $x_i$ along with $Y_i$ and all the $v\in Z$ which chose neighbors in $Y_i$ form a tree of color $i$ and radius at most $2$. Thus we have a partition into $k\leq r$ monochromatic trees.

\end{proof}

A key element in the preceding proof was to first build a monochromatic tree cover in which the common intersection of all of the trees was a large enough set of leaves.  We now explicitly define this structure.

\begin{definition}
A \tbf{$(k,l,n)$-absorbing tree partition} is a collection of trees $T_1,\dots, T_k$ together with a \tbf{common leaf set} $L$ of size $l$ such that
\begin{enumerate} 
\item $|\bigcup_{i\in [k]}V(T_i)|=n$,
\item the edges of $T_i$ have color $i$ for all $i\in [k]$, 
\item every vertex in $L$ is a leaf of $T_i$ for all $i \in [k]$, and \item for all $i\neq j$, $V(T_i)\cap V(T_j)=L$.
\end{enumerate}
\end{definition}

Note that if every $r$-coloring of a graph $G$ on $n$ vertices contains a $(k,l,n)$-absorbing tree partition for some $1\leq k\leq r$ and $l\geq 0$, then by arbitrarily assigning the leaves to the trees we have $\tp_r(G)\le r$ (with trees of distinct colors).

We will consider absorbing tree partitions in two different settings: first, in Theorem \ref{thm:mindegpartition} we wish to optimize the bound on the minimum degree so that $\tp_r(G)\leq r$, and second, we will apply Theorem \ref{thm:mindegpartition} in a setting where the graph is nearly complete, in which case we do not need so much control over the minimum degree as we need control over the size of the common leaf set.  So for the purposes of streamlining, we combine everything we want into the following statement, which has a parameter $\ep$ related to the minimum degree and a parameter $\alpha$ which is related to the size of the leaf set and the lower bound on $n$.  The method of proof will be similar to that of Theorem \ref{thm:completepartition}; however, the calculations are different as here we are attempting to optimize the minimum degree instead of the lower bound on $n$.

\begin{theorem}\label{thm:mindegpartition}
Let $r\geq 2$, $0<\ep<\frac{1}{er!}$, $\alpha=\frac{1}{er!}-\ep$, and $n_0=\max\{\frac{12}{\alpha^2}\log(\frac{6}{\alpha^2}), \frac{4r}{\alpha}\log(\frac{2r}{\alpha})\}$.  If $G$ is a graph on $n\geq n_0$ vertices with $\delta(G)\geq (1-\ep)n$, then in every $r$-coloring of $G$ there either exists a $(1,l,n)$-absorbing tree partition with $l\geq n-\frac{2}{\alpha}\log n$ (i.e. a monochromatic spanning tree with at least $n-\frac{2}{\alpha}\log n$ leaves) or a $(k,l,n)$-absorbing tree partition with $2\leq k\leq r$ and $l\geq \alpha n/2$.
\end{theorem}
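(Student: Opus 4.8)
The plan is to follow the same two-step scheme as in the proof of Theorem~\ref{thm:completepartition}, but to track the minimum degree carefully rather than the lower bound on $n$, and to produce a genuine common leaf set rather than just a partition. Fix an $r$-coloring of $G$. \textbf{Step 1 (greedy construction of the trees).} Start with an arbitrary vertex $x_1$ and let $Y_1$ be its largest monochromatic neighborhood, say in color $1$; since $\delta(G)\ge(1-\ep)n$ we have $|Y_1|\ge (1-\ep)n/r$. If every vertex outside $Y_1$ has a color-$1$ neighbor in $Y_1$ we are (essentially) done with $k=1$; otherwise some $x_2\notin Y_1$ has at least $\tfrac{1}{r-1}\big((1-\ep)n - |Y_1|^c\big)$-ish neighbors of a new color $2$ inside $Y_1$, where the loss accounts for the at-most-$\ep n$ non-neighbors of $x_2$, and we set $Y_2 = Y_1\cap N_2(x_2)$. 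Iterate: having defined $Y_i$ with $|Y_i|$ controlled, either every $v\notin Y_i\cup\{x_1,\dots,x_i\}$ already has more than $i\log n$ neighbors in $Y_i$ using colors from $[i]$ (go to Step~2 with $k=i$), or some $x_{i+1}$ has at most $i\log n$ such neighbors and hence $\ge \tfrac{1}{r-i}(|Y_i| - i\log n - \ep n)$ neighbors of a fresh color $i+1$ in $Y_i$; set $Y_{i+1} = Y_i\cap N_{i+1}(x_{i+1})$. The key point that differs from the complete case: each step the size drops by a factor roughly $\tfrac{1}{r-i}$ and also loses an additive $\ep n$ (from the non-neighbors of the new center) plus a negligible $i\log n$. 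One runs the recursion to see $|Y_r|\ge \alpha n$ up to lower-order terms; this is exactly where $\alpha = \tfrac{1}{er!}-\ep$ comes from, since $\tfrac{1}{r!}\ge$ the product of the shrink factors and the accumulated $\ep n$ losses sum to something comparable to $\ep n$ times a factor like $e$, and the $\log n$ terms are swallowed by the hypothesis $n\ge n_0$.

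\textbf{Step 2 (splitting the leaf set and keeping a common core).} When we stop, we have colors $1,\dots,k$, centers $x_1,\dots,x_k$, a set $Y=Y_k$ with $|Y|\ge \alpha n$ (for $k\ge 2$; for $k=1$ the set $Y_1$ is already huge, $|Y_1|\ge (1-\ep)n/r \ge n - \tfrac{2}{\alpha}\log n$, and we directly get the $(1,l,n)$-absorbing partition), and a leftover set $Z = V\setminus(\{x_1,\dots,x_k\}\cup Y)$ every vertex of which has more than $k\log n > k\log|Z|$ neighbors in $Y$ using colors from $[k]$. Apply Lemma~\ref{lem:Ypartition} to split $Y = Y_1\cup\dots\cup Y_k$ so that every $v\in Z$ has $N_i(v)\cap Y_i\neq\emptyset$ for some $i$; assign each such $v$ to one such tree via one chosen neighbor. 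This gives trees $T_1,\dots,T_k$ of the right colors with pairwise intersection only in... well, not quite: the $x_i$'s and the $Z$-vertices need to not be shared, which they aren't by construction (each $x_i$ only in $T_i$, each $Z$-vertex assigned to exactly one tree), and the $Y_i$'s are disjoint. \textbf{The extra wrinkle:} to get a \emph{common leaf set} $L$ with $L\subseteq V(T_i)$ for every $i$, I would \emph{reserve} a subset $L\subseteq Y$ of size $\sim\alpha n/2$ \emph{before} running Lemma~\ref{lem:Ypartition}: since every vertex of $L$ lies in $N_i(x_i)$ for all $i\le k$ (as $L\subseteq Y_k\subseteq N_i(x_i)$ by the construction), attaching every vertex of $L$ to each center $x_i$ makes $L$ a set of common leaves; then apply Lemma~\ref{lem:Ypartition} only to $Y\setminus L$ (which still has $\ge \alpha n/2 > k\log|Z|$ vertices per vertex of $Z$, using $n\ge n_0$) to handle $Z$. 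This yields a $(k,l,n)$-absorbing tree partition with $l\ge \alpha n/2$.

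\textbf{Main obstacle.} The delicate part is the bookkeeping in Step~1: proving that the combination of multiplicative shrinking (total factor at least $1/r!$ after dividing by $r, r-1, \dots, 2$ — note we pay a full factor $1/r$ at the first split) and the additive $\ep n$ losses at each of the $\le r-1$ splits still leaves $|Y_k|\ge \alpha n$ with $\alpha = \tfrac{1}{er!}-\ep$, and that the $i\log n$ correction terms are dominated once $n\ge n_0 = \max\{\tfrac{12}{\alpha^2}\log\tfrac{6}{\alpha^2},\ \tfrac{4r}{\alpha}\log\tfrac{2r}{\alpha}\}$. Concretely one shows by induction that $|Y_i|\ge \tfrac{(1-\ep)^{?}n}{r(r-1)\cdots(r-i+1)} - (\text{small})$, bounds the accumulated additive error by a geometric-type sum that is at most, say, $e\ep n + o(n)$, and checks the resulting inequality $\tfrac{n}{r!} - e\ep n - (\text{log terms}) \ge (\tfrac{1}{er!}-\ep)n$; the factor $e$ appearing in $\alpha$ strongly suggests the intended bound on the error sum is $\sum_{j} \tfrac{1}{j!}\,\ep n$-like, i.e.\ at most $(e-1)\ep n$ or so, and one must be a little careful that the constant works out. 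A secondary, easier obstacle is confirming the $n_0$ bound suffices both for ``$\log n$ terms are negligible'' in Step~1 and for ``$Y\setminus L$ is still large enough to invoke Lemma~\ref{lem:Ypartition}'' in Step~2; the two terms in the definition of $n_0$ are evidently tuned to these two needs respectively. I would also double-check the boundary case $k=1$: there every vertex outside $Y_1$ has a color-$1$ neighbor in $Y_1$, so $V(T_1)=V$, $T_1$ has radius $\le 2$, and $Y_1$ (minus possibly a few internal vertices) is a leaf set of size $\ge (1-\ep)n/r \ge n - \tfrac{2}{\alpha}\log n$ once $n\ge n_0$, using $(1-\ep)/r \ge \alpha$ loosely — this inequality needs a quick sanity check since $\alpha$ could in principle be larger than $(1-\ep)/r$, but for $r\ge 2$ and $\ep$ small, $\tfrac{1-\ep}{r}$ is of order $1/r$ while $\alpha$ is of order $1/(er!)$, so it holds comfortably.
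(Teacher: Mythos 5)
Your overall scheme matches the paper's (greedy nesting of common neighborhoods $Y_1\supseteq Y_2\supseteq\cdots$, then split the final set via Lemma~\ref{lem:Ypartition}), but you've imported the \emph{wrong stopping condition} from the complete-graph proof of Theorem~\ref{thm:completepartition}, and this breaks both of your endgames.

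In Step~1 you stop when every leftover vertex has more than $i\log n$ colored neighbors in $Y_i$. In the paper's proof of Theorem~\ref{thm:mindegpartition} the stopping condition is that every such vertex has at least $\alpha n$ colored neighbors in $Y_i$ — a linear, not logarithmic, threshold. This matters in two places. First, in your Step~2 you reserve a subset $L\subseteq Y$ of size $\sim\alpha n/2$ and then assert that every $v\in Z$ still has $\ge\alpha n/2$ suitably colored neighbors in $Y\setminus L$; but your own Step~1 condition only guarantees $v$ has $>k\log n$ such neighbors in all of $Y$, which could lie entirely inside $L$. With the paper's $\alpha n$ threshold the reservation is immediate: $\alpha n - |L|\ge\alpha n/2>k\log|Z|$ survives. (This also changes the Step~1 recursion: the per-step loss becomes $\ep n+\alpha n$, not $\ep n + i\log n$, giving $|Y_r|\ge(\tfrac{1}{r!}-(e-1)(\ep+\alpha))n$, which is exactly calibrated so that $|Y_r|-\ep n\ge\alpha n$.)

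Second, your $k=1$ case is wrong. You claim $|Y_1|\ge(1-\ep)n/r\ge n-\tfrac{2}{\alpha}\log n$, but the second inequality is false in general: the left side is roughly $n/r$ while the right side is roughly $n$. (Your ``sanity check'' confirms $(1-\ep)/r\ge\alpha$, which is a different and irrelevant inequality.) What is actually needed, and what the paper does, is a separate lemma (Lemma~\ref{lem:manyleaves}): under the hypothesis that \emph{every} vertex has $\ge\alpha n$ color-$1$ neighbors in $Y_1$, one greedily extracts a dominating set of size $\le\tfrac{2}{\alpha}\log n$ inside $\{x_1\}\cup Y_1$ and builds a spanning color-$1$ tree with that many non-leaves. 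Merely knowing that every outside vertex has \emph{a} neighbor in $Y_1$ (your stated condition) produces a tree whose leaf set is only $|Y_1|\approx n/r$, far short of $n-\tfrac{2}{\alpha}\log n$. Note that this lemma also explains the first term $\tfrac{12}{\alpha^2}\log(\tfrac{6}{\alpha^2})$ in the definition of $n_0$, which your sketch has no use for.
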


We will need the following two statements in the proof of Theorem \ref{thm:mindegpartition}.

\begin{observation}\label{logn/n}
Let $x\in \mathbb{R}$ with $x\geq 2$.  If $n\geq 2x\log x$, then $\frac{\log n}{n}<\frac{1}{x}$.  
\end{observation}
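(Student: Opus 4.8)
The plan is to reduce the statement to a single elementary inequality in $x$. First I would record that $f(t) := \frac{\log t}{t}$ is strictly decreasing on $(e,\infty)$, since $f'(t) = \frac{1-\log t}{t^2} < 0$ for $t > e$. Then I would check that for every real $x \geq 2$ one has $2x\log x \geq 4\log 2 > e$; here I use that $t \mapsto 2t\log t$ is increasing for $t \geq 1$, so $x = 2$ is the worst case, and $4\log 2 \approx 2.77 > 2.72 \approx e$. Hence the hypothesis $n \geq 2x\log x$ forces $n$ into the region where $f$ is strictly decreasing, and so it is enough to verify the inequality at the left endpoint, i.e.\ to show $f(2x\log x) < \frac{1}{x}$.

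Unwinding this, $f(2x\log x) < \frac1x$ is equivalent to $\frac{\log(2x\log x)}{2\log x} < 1$, i.e.\ to $2x\log x < x^2$, i.e.\ to $2\log x < x$. Thus the whole statement comes down to the fact that $2\log x < x$ for all $x \geq 2$, which I would prove by setting $g(x) := x - 2\log x$, noting $g(2) = 2 - 2\log 2 > 0$ and $g'(x) = 1 - \frac 2x \geq 0$ for $x \geq 2$, so $g(x) > 0$ on $[2,\infty)$. Finally, since $f$ is \emph{strictly} decreasing past $e$ and $f(2x\log x) < \frac1x$, for every $n \geq 2x\log x$ we get $f(n) \leq f(2x\log x) < \frac 1x$, which is the claim.

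There is no real obstacle: the argument is a monotonicity reduction followed by a one-variable calculus fact. The only place warranting a moment of care is confirming $2x\log x > e$ so that the monotonicity of $f$ may be invoked — numerically this is the slightly tight comparison of $4\log 2$ with $e$ — but it is clean once one observes that $x = 2$ is extremal for $t\mapsto 2t\log t$. If one wishes to bypass even that comparison, an alternative is to write $n = 2sx\log x$ with $s \geq 1$, expand $\log n = \log 2 + \log x + \log\log x + \log s$, bound $\log s \leq s - 1 \leq 2(s-1)\log x$ using $\log x \geq \log 2 \geq \frac12$, and again reduce to $2\log x < x$; the monotonicity route is shorter, so I would present that one.
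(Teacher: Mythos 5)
Your proof is correct and follows essentially the same route as the paper: both use that $\frac{\log t}{t}$ is strictly decreasing beyond $e$ and then compare $2x\log x$ with $x^2$ (equivalently $2\log x < x$) to bound the value at the left endpoint. You simply spell out the auxiliary facts ($2x\log x > e$ and $2\log x < x$ for $x\geq 2$) that the paper leaves implicit.
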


\begin{proof}
We first note that $\frac{\log n}{n}$ is strictly decreasing since $n\geq 2x\log x>e$.  Now since $2x\log x<x^2$, we have $\frac{\log(2x\log x)}{2x\log x}=\frac{\log(2x\log x)}{x\log x^2}<\frac{1}{x}$.
\end{proof}

\begin{lemma}\label{lem:manyleaves}
Let $0<\alpha\leq 1$ and $n_0=\frac{12}{\alpha^2}\log(\frac{6}{\alpha^2})$ and let $G$ be a graph on $n\geq n_0$ vertices.  If there exists $x\in V(G)$ such that for all $v\in V(G)$, $\deg(v, N(x))\geq \alpha n$, then $G$ has a spanning tree with at least $n-\frac{2}{\alpha}\log n$ leaves.
\end{lemma}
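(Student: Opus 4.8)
The idea is to build the spanning tree by first selecting a small "core" set $S \subseteq N(x)$ that dominates all of $V(G)$, then attaching every remaining vertex as a leaf via an edge to $S$, and finally spanning $S \cup \{x\}$ by a tree rooted at $x$ (which is possible since $S \subseteq N(x)$, so $x$ is adjacent to all of $S$; in fact the star from $x$ to $S$ works, or one uses that the tree on the at most $|S|+1$ core vertices contributes at most $|S|$ internal vertices). Then the number of internal (non-leaf) vertices is at most $|S| + 1$ (the vertices of $S$ together with $x$), so the number of leaves is at least $n - |S| - 1$. Hence it suffices to find a dominating set $S \subseteq N(x)$ with $|S| \le \tfrac{2}{\alpha}\log n - 1$, say $|S| \le \tfrac{2}{\alpha}\log n$ after absorbing the additive constant into the bound; I will track constants so the claimed $n - \tfrac{2}{\alpha}\log n$ comes out.

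To produce such a dominating set I would use the standard probabilistic/greedy argument. Let $m = |N(x)|$; note $m \ge \alpha n$ since $x$ itself has all of $V(G)$ (in particular $x\in N(x)$? — more carefully, every $v$ has $\deg(v,N(x))\ge \alpha n$, so $|N(x)| \ge \alpha n$). Pick each vertex of $N(x)$ independently with probability $q = \tfrac{c\log n}{\alpha n}$ for a suitable constant $c$; the expected size of the chosen set $S$ is $qm$, and for any fixed $v \in V(G)$ the probability that $v$ has no chosen neighbor in $N(x)$ is at most $(1-q)^{\deg(v,N(x))} \le (1-q)^{\alpha n} \le e^{-q\alpha n} = e^{-c\log n} = n^{-c}$. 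Taking $c = 2$, the union bound over the $n$ vertices $v$ gives failure probability at most $n \cdot n^{-2} = 1/n < 1$, and simultaneously a Chernoff bound (or Markov with a small slack, then a union with the domination event) shows $|S| \le 2qm \le 2q n \cdot (1) $, i.e.\ $|S| \lesssim \tfrac{2\log n}{\alpha}$ except with probability $o(1)$; here is where the hypothesis $n \ge \tfrac{12}{\alpha^2}\log(\tfrac{6}{\alpha^2})$ is used, to guarantee that $qm$ is large enough for the Chernoff concentration to kick in and that the two bad events together still have probability $<1$. So there exists a choice of $S$ that is both dominating and of size at most $\tfrac{2}{\alpha}\log n$ (modulo the additive $O(1)$, which I will arrange to be swallowed by the $\log n$ term for $n\ge n_0$).

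Assembling: take such an $S$, add $x$, and form any spanning tree $T'$ of $G[S \cup \{x\}]$ (connected because $x$ is adjacent to everything in $S$). Then for each $v \in V(G) \setminus (S \cup \{x\})$, pick one neighbor of $v$ lying in $S$ (exists by domination) and add that edge; these additions create no cycles and make every such $v$ a leaf. The internal vertices of the resulting spanning tree $T$ are contained in $S \cup \{x\}$, so $T$ has at least $n - |S| - 1 \ge n - \tfrac{2}{\alpha}\log n$ leaves (using $n\ge n_0$ to absorb the $+1$; if one prefers, note $\tfrac{2}{\alpha}\log n \ge |S|+1$ already for $n$ this large since $\alpha \le 1$).

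\textbf{Main obstacle.} The only real work is the concentration step: choosing $q$ and verifying that for $n \ge \tfrac{12}{\alpha^2}\log(\tfrac{6}{\alpha^2})$ the events "$S$ dominates" and "$|S|$ is not much larger than its mean" hold simultaneously with positive probability — i.e.\ pinning down the constants $6$ and $12$ in $n_0$ so that a Chernoff bound on $|S|$ (whose mean is $qm \ge q\alpha n = 2\log n$, comfortably large once $n \ge n_0$) gives a deviation probability below $1/2$ while the domination failure is below $1/2$. This is a routine Chernoff computation but is exactly where $n_0$ is consumed, so it must be done with care; everything else is bookkeeping.
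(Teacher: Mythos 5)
Your high-level plan (pick a small dominating set $S\subseteq N(x)$, hang every other vertex off $S\cup\{x\}$ as a leaf, root the tree at $x$) is exactly the paper's plan, but the way you construct $S$ is genuinely different: the paper builds the dominating set by a deterministic greedy argument (repeatedly pick the vertex of $N(x)$ with the most neighbors among the still-undominated vertices; a double-counting bound shows the undominated set shrinks by a factor $1-\tfrac{2\alpha}{3}$ per step, so $\lceil\tfrac{3}{2\alpha}\log n\rceil$ steps suffice, and the hypothesis on $n_0$ is fed in via the elementary Observation $\log n/n<\alpha^2/6$). Your route is the probabilistic one. Conceptually it is a legitimate alternative and can be made to work, but the constants you actually wrote down do not close, and patching them is less routine than you suggest.

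Concretely, with $q=\tfrac{2\log n}{\alpha n}$ (your $c=2$) you assert that Chernoff gives $|S|\le 2qm\le 2qn\lesssim \tfrac{2}{\alpha}\log n$. But $2qn=\tfrac{4\log n}{\alpha}$, which is twice your target; the number of non-leaves you get is bounded by $\tfrac{4}{\alpha}\log n$, not $\tfrac{2}{\alpha}\log n$. Note also that $m=|N(x)|$ can genuinely be close to $n$ (only $m\ge\alpha n$ is forced), so one cannot argue $qm\ll qn$. To hit the stated bound you must take $c<2$, and then pure union-bounding on domination (failure $\le n^{1-c}$) plus a Chernoff upper tail on $|S|$ is delicate: for $n$ near the threshold $n_0$ and $\alpha$ not small, the upper-tail bound on $|S|$ above its mean $\approx\tfrac{c\log n}{\alpha}$ is not strong enough to push below $\tfrac{2\log n}{\alpha}-1$ with probability $>1/2$. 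What does work is the alteration variant you parenthetically mention: take $q=\tfrac{3\log n}{2\alpha n}$, let $U$ be the set of undominated vertices, note $\mathbb{E}[|S|+|U|]\le\tfrac{3\log n}{2\alpha}+n\cdot n^{-3/2}$, and apply Markov to the single random variable $|S|+|U|$ to get a realization with $|S|+|U|\le\tfrac{2\log n}{\alpha}-1$; this needs $\tfrac{\log n}{2\alpha}>1+n^{-1/2}$, which holds for $n\ge n_0$. Attach each $v\in U$ to an arbitrary neighbor in $N(x)$ (which exists since $\deg(v,N(x))\ge\alpha n>0$), so the non-leaves are contained in $\{x\}\cup S\cup$ (the $\le|U|$ repair vertices). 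That closes the gap, but it is the alteration/Markov route, not the Chernoff route you outlined, and the constants need the recomputation above. The paper's greedy argument avoids all of this by producing the dominating set of size $\le\tfrac{3}{2\alpha}\log n$ deterministically, with the $n_0$ bound used only once in a one-line estimate.
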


\begin{proof}
We will show that $x$ along with at most $\frac{2}{\alpha}\log n$ of its neighbors form a dominating set.
Set $Y_1:=N(x)$ and $Z_1=V\setminus (\{x\}\cup Y_1)$.  For $1\leq i \leq \ceiling{\frac{3}{2\alpha}\log n}-1$,  
do the following: If $Z_i\neq \emptyset$, let $y_{i}$ be the vertex in $Y_i$ with the largest degree to $Z_i$. Set $Y_{i+1}=Y_i\setminus \{y_{i}\}$ and $Z_{i+1}=Z_i\setminus N(y_{i})$.  Since $\deg(y_i, Z_i)\geq \frac{\alpha n-i}{|Y_1|-i}|Z_i|> \frac{2\alpha}{3} |Z_i|$ (where the second inequality holds by Observation \ref{logn/n} and the bound on $n$) and thus $$|Z_{i+1}|< (1-2\alpha/3)|Z_i|\leq (1-2\alpha/3)^{i}|Z_1|\leq (1-2\alpha/3)^{i+1}n\leq 1$$ when $i+1\geq \frac{3}{2\alpha}\log n$.  Thus when the process stops, we have a spanning tree with at most $\ceiling{\frac{3}{2\alpha}\log n}\leq \frac{2}{\alpha}\log n$ non-leaves.
\end{proof}

\begin{proof}[Proof of Theorem \ref{thm:mindegpartition}]
\noindent
\textbf{Step 1:} Let $x_1\in V(G)$ and let $Y_1$ be the largest monochromatic neighborhood of $x_1$, say the color is $1$.  Note that $|Y_1|\geq \frac{1}{r}(1-\ep)n$.  If for all $v\in V\setminus Y_1$, $\deg_1(v, Y_1)\geq \alpha n$, then since $n\geq \frac{12}{\alpha^2}\log(\frac{6}{\alpha^2})$, we may apply Lemma \ref{lem:manyleaves} to get a  monochromatic spanning tree (in color 1) with at least $n- \frac{2}{\alpha}\log n$ leaves and we are done.  Otherwise some vertex $x_2$ has at least $\frac{1}{r-1}(|Y_1|-\ep n-\alpha n)$ neighbors of say color $2$ in $Y_1$.  Set $Y_2:=Y_1\cap N_2(x_2)$.  

For $i\geq 2$, do the following: if for all $v\in V\setminus Y_{i}$, $|(\bigcup_{j=1}^iN_j(v))\cap Y_i|\geq \alpha n$,
then set $k:=i$, $Y:=Y_k$, and $Z=V\setminus (\{x_1,\dots, x_k\}\cup Y_k)$ then proceed to Step 2. Otherwise some vertex $x_{i+1}\in V\setminus Y_{i}$ has at least $\frac{1}{r-i}(|Y_{i}|-\ep n-\alpha n)$ neighbors of color say $i+1$, in $Y_{i}$.  Set $Y_{i+1}:=Y_{i}\cap N_{i+1}(x_{i+1})$.  Continue in this manner, until we go to Step 2 or until $i=r$.  If $i=r$, then 
$$|Y_r|\geq \left(\frac{1}{r!}-\ep\sum_{j=1}^{r}\frac{1}{j!}-\alpha\sum_{j=1}^{r-1}\frac{1}{j!}\right)n\geq \left(\frac{1}{r!}-\ep (e-1)-\alpha(e-1)\right)n$$ 
and thus every vertex in $V\setminus Y_r$ has at least 
$$|Y_r|-\ep n\geq \left(\frac{1}{r!}-\ep (e-1)-\alpha(e-1)\right)n-\ep n=\alpha n$$ 
neighbors in $Y_r$.  Now set $k:=r$, $Y:=Y_k$, and $Z=V\setminus (\{x_1,\dots, x_k\}\cup Y_k)$ then proceed to Step 2.

\noindent
\textbf{Step 2:} First set aside $\alpha n/2$ vertices from $Y$ to be the common leaf set of the absorbing tree partition and let $Y'$ be the remaining vertices in $Y$.  Every vertex in $Z$ still has at least $\alpha n/2$ neighbors in $Y'$.  Since $n\geq \frac{4r}{\alpha}\log(\frac{2r}{\alpha})$, Observation \ref{logn/n} implies that $\alpha n/2>r\log n$ and thus we can apply Lemma \ref{lem:Ypartition} to get a partition $\{Y_1', \dots, Y_k'\}$ of $Y'$ such that for all $v\in Z$, there exists $i\in [k]$ such that $N_i(v)\cap Y_i'\neq \emptyset$.  By arbitrarily choosing such a $Y_i'$ for each $v\in Z$, we have a $(k,l,n)$-absorbing tree partition with $2\leq k\leq r$ and $l\geq \alpha n/2$.
\end{proof}

While we are not able to prove that the bound on the minimum degree in Theorem \ref{thm:mindegpartition} is optimal, Observation \ref{obs:alphar} shows that there are graphs with $\delta(G)\geq n-r$ for which $\tp_r(G)\geq \tc_r(G)\geq r$, and thus the goal in the minimum degree version of the problem (optimizing $\delta(G)$ while maintaining $\tp_r(G)\leq r$) is different from the goal in the case of complete graphs (proving $\tp_r(K_n)\leq r-1$).  

In Theorem \ref{thm:mindegpartition} we actually prove that the trees have distinct colors, so it is natural to ask the question of how the minimum degree threshold for partitioning (covering) into $r$ trees compares to the minimum degree threshold for partitioning (covering) into $r$ trees of distinct colors (see Section \ref{ssec:distinct}).

\subsection{Covers}
\label{ssec:cov}

For the cover version of the $r=2$ case, we can actually prove a tight bound on the minimum degree (see Example \ref{obs:tcrlb}).

\begin{theorem}\label{thm:2colorcover}
Let $n\geq 1$.  For all graphs $G$ on $n$ vertices, if $\delta(G)\geq \frac{2n-5}{3}$, then $\tc_2(G)\leq 2$.  
\end{theorem}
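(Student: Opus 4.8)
The plan is to argue the contrapositive, reducing the failure of a $2$-cover to a small rigid obstruction via K\H{o}nig's theorem. Suppose $\tc_2(G)>2$. If the subgraph of one colour (say red) has at most two connected components, then those (at most two) red monochromatic components already cover $V(G)$, a contradiction; the same applies to the other colour. So I may assume the red subgraph has components $A_1,\dots,A_k$ with $k\ge 3$ and the blue subgraph has components $B_1,\dots,B_\ell$ with $\ell\ge 3$.

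Next I would form the bipartite ``component graph'' $H$ with parts $\{A_1,\dots,A_k\}$ and $\{B_1,\dots,B_\ell\}$, putting $A_iB_j\in E(H)$ exactly when $A_i\cap B_j\ne\emptyset$ (note $H$ is simple and loopless, and has no isolated vertices since every component is nonempty). The point is that a $2$-cover of $G$ by one red component $A_i$ and one blue component $B_j$ is equivalent to $\{A_i,B_j\}$ being a vertex cover of $H$: indeed $\{A_i,B_j\}$ covers all edges of $H$ iff $A_{i'}\cap B_{j'}=\emptyset$ for all $i'\ne i,\ j'\ne j$, iff $\bigcup_{i'\ne i}A_{i'}\subseteq B_j$, iff $A_i\cup B_j=V(G)$. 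Moreover a vertex cover of $H$ of size $0$ or $1$, or one consisting of two $A$'s (resp.\ two $B$'s), is impossible: it would force all but one component of some colour to be empty, contradicting $k,\ell\ge 3$. Since $\tc_2(G)>2$ there is no $2$-cover at all, so $H$ has no vertex cover of size $\le 2$; by K\H{o}nig's theorem $H$ has a matching of size $3$. This produces pairwise distinct red components $A_{i_1},A_{i_2},A_{i_3}$ and pairwise distinct blue components $B_{j_1},B_{j_2},B_{j_3}$ with $A_{i_t}\cap B_{j_t}\ne\emptyset$; choose $w_t\in A_{i_t}\cap B_{j_t}$ for $t=1,2,3$ (these three vertices play the role of $u_1,u_2,u_3$ in Example~\ref{obs:tcrlb}).

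The argument then closes with a degree count. The red component of $w_t$ is $A_{i_t}$ and its blue component is $B_{j_t}$, so $\deg_G(w_t)=\deg_R(w_t)+\deg_B(w_t)\le (|A_{i_t}|-1)+(|B_{j_t}|-1)$. Summing over $t=1,2,3$ and using that the three $A_{i_t}$ are pairwise disjoint, so $\sum_t|A_{i_t}|\le n$, and likewise the three $B_{j_t}$, I get $3\,\delta(G)\le\sum_{t=1}^{3}\deg_G(w_t)\le 2n-6$, hence $\delta(G)\le\frac{2n-6}{3}<\frac{2n-5}{3}$, contradicting the hypothesis. I do not expect a genuine obstacle here: the only step that is not immediately obvious is to pass to the component graph $H$ and invoke K\H{o}nig, after which everything is a routine count; the small points to be careful about are keeping the components nonempty so that $H$ is a bona fide bipartite graph, dealing with tiny $n$ (where $k,\ell\ge 3$ is already very restrictive), and checking that $\frac{2n-6}{3}<\frac{2n-5}{3}$ is an honest contradiction for integer $\delta(G)$ and every $n$. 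This proof also makes transparent why $\frac{2n-5}{3}$ is the exact threshold, matching the construction in Example~\ref{obs:tcrlb}.
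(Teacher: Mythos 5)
Your proof is correct, and it takes a genuinely different route from the paper's. The paper argues by taking a monochromatic component cover $\cT$ that is minimum and, among those, uses as many colours as possible; it then splits into two cases depending on whether both colours appear, uses the sets $\phi(T)$ of vertices covered only by $T$, and in each case derives a contradiction via common neighbourhood counts. Your argument instead reduces the failure of a $2$-cover to the statement that the bipartite ``intersection graph'' $H$ of red components vs.\ blue components has no vertex cover of size two, invokes K\H{o}nig's theorem to extract a $3$-matching $\{A_{i_t}\cap B_{j_t}\ne\emptyset\}_{t=1}^3$, and closes with the degree count $3\delta(G)\le\sum_t\deg(w_t)\le\sum_t(|A_{i_t}|+|B_{j_t}|)-6\le 2n-6$. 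This is shorter and more transparent: it isolates exactly the obstruction (three pairwise ``crossing'' component pairs), explains immediately why the bound is sharp against Example~\ref{obs:tcrlb}, and avoids the paper's case split. One tiny inaccuracy in your write-up (which does not affect correctness): a size-$2$ vertex cover consisting of two $A$'s forces all but \emph{two} (not all but one) red components to be empty, which still contradicts $k\ge 3$. It is also worth saying explicitly that a cover by two monochromatic connected subgraphs can be upgraded to a cover by two maximal monochromatic components, so testing only covers of the form $\{A_i,A_{i'}\}$, $\{B_j,B_{j'}\}$, $\{A_i,B_j\}$ is without loss of generality; you implicitly use this when translating ``no $2$-cover'' into ``no vertex cover of $H$ of size $\le 2$.''
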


\begin{proof}
Suppose that $n=3m+q$ where $q\in\set{0,1,2}$. Then $\delta(G)\ge \frac{2n-5}{3}$ translates to $\delta(G)\ge 2m-1+\flfrac{q}{2}$.

Suppose $G$ is $2$-colored and let $\cT=\{R_1,\dots, R_k$, $B_1,\dots, B_l\}$ be a monochromatic component cover of $G$ with the fewest number of components, where each component is maximal; and with respect to this, choose $\cT$ so that as many different colors are represented as possible.  Without loss of generality suppose $k\geq l$.  We are done unless $|\cT|\geq 3$.  It is clear from minimality of the number of components, that for each component $T\in \cT$, there is a non-empty subset of vertices $\phi(T)$ such that every vertex in $\phi(T)$ is not contained in any other component $S\in \cT\setminus \{T\}$.

\textbf{Case 1} (There is at least one component of each color). 

Since $k\geq l$ and $k+l\geq 3$, we have $k\geq 2$.  Suppose first that there exist vertices $u_i\in \phi(R_i)$, $u_j\in \phi(R_j)$, and $v_h\in \phi(B_h)$ such that $u_iu_j\not\in E(G)$.  By the maximality of the components, $u_iv_h, u_jv_h\not\in E(G)$.  So 
$$|N(u_i)\cap N(u_j)\cap N(v_h)|\geq n-3-3(n-3-\delta(G))=3\delta(G)-2n+6\geq 1$$
So let $w\in N(u_i)\cap N(u_j)\cap N(v_h)$.  If $w$ is in a blue component $B'\in \cT$, then $w$ cannot be adjacent to $u_i$ or $u_j$ via a blue edge (as this would imply that $u_i$ or $u_j$ is contained in $B'$).  So $w$ is adjacent to $u_i$ and $u_j$ via red edges, but this contradicts the fact that $u_i$ and $u_j$ are in different red components.  So suppose that for all blue components $B'\in \cT$, $w\not\in V(B')$.  This implies that $wv_h$ must be a red edge and that $w$ is in a red component of $\cT$, but again this contradicts the fact that $v_h$ is not contained in any red component of $\cT$.  In either case, we get a contradiction. 

So we may assume that the vertices of $\phi(R_1),\dots, \phi(R_k)$ induce a complete $k$-partite blue graph $B'$.  However, this implies that $\{B_1, \dots, B_l, B'\}$ is a cover with fewer components.

\textbf{Case 2} (All of the components have the same color).  

Since $k\geq l$, $\cT=\{R_1,\dots, R_k\}$. Without loss of generality suppose $|R_1|\leq |R_2|\leq \dots\leq |R_k|$.  Note that since $k\geq 3$ and all components are red, we have $2\leq |R_1|\leq m$.  Since $R_1$ is maximal, every edge leaving $R_1$ is blue and thus for all $v\in R_1$ we have
\begin{equation}\label{eq:R1blue}
|N_B(v)\cap (V(G)\setminus R_1)|\geq \delta(G)-(|R_1|-1)\geq 2m+\floor{\frac{q}{2}}-|R_1|\geq m+\floor{\frac{q}{2}}.
\end{equation}

From \eqref{eq:R1blue}, and the fact that $|V(G)\setminus R_1|\leq 3m+q-2$, we see that for any set of three vertices $\{x,y,z\}$ in $R_1$, some pair of $\{x,y,z\}$ must have a common blue neighbor in $V(G)\setminus R_1$. This implies that there are either one or two blue components which cover the vertices of $R_1$. 
If there were only one, we would be in Case 1. So assume that there are two blue components $B_1$ and $B_2$ which cover every vertex in $R_1$.  Now using \eqref{eq:R1blue}, we get that 
$$|B_1|+|B_2|\geq |R_1|+2(2m+\floor{\frac{q}{2}}-|R_1|)=4m+2\floor{\frac{q}{2}}-|R_1|\geq 3m+2\floor{\frac{q}{2}}\geq 3m+q-1.$$
So either $B_1$ and $B_2$ form a cover with two components, or $B_1$, $B_2$, and the red component containing the leftover vertex form a cover with two blue components and a red component and thus we are in Case 1.


\end{proof}

\subsection{Large monochromatic components}
\label{ssec:lmcdeg}

We use the following lemma of Liu, Morris, and Prince \cite{LMP} (an essentially equivalent version of this lemma was independently proved by Mubayi \cite{Mub}).  A \tbf{double-star} is a tree having at most two vertices which are not leaves.  

\begin{lemma}[Lemma 9 in \cite{LMP}]\label{lem:LMP}
Let $c\geq 0$ and let $G$ be an $X,Y$-bipartite graph on $n$ vertices.  If $e(G)\geq c|X||Y|$, then $G$ has a double-star of order at least $cn$.
\end{lemma}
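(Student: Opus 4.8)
\textbf{Plan for the proof of Lemma \ref{lem:LMP}.}

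The plan is to prove the lemma by iteratively discarding low-degree vertices until the bipartite graph left over has a large minimum degree on both sides, and then to extract the double-star from two adjacent high-degree vertices, one on each side. Concretely, suppose for contradiction that $G$ has no double-star of order at least $cn$. First I would repeatedly delete any vertex in $X$ whose degree (into the current $Y$) is less than $\tfrac{c}{2}|Y|$, and symmetrically any vertex in $Y$ whose degree into the current $X$ is less than $\tfrac{c}{2}|X|$, where $|X|$ and $|Y|$ always refer to the current sides; one has to be a little careful about whether the thresholds are stated with respect to the original or the current sizes, so the precise form of the cleaning will need tuning. The key accounting point is that each deletion of an $X$-vertex removes fewer than $\tfrac{c}{2}|Y|$ edges and each deletion of a $Y$-vertex removes fewer than $\tfrac{c}{2}|X|$ edges, so the total number of edges removed is strictly less than $\tfrac{c}{2}|X_0||Y_0| + \tfrac{c}{2}|X_0||Y_0| = c|X_0||Y_0| \le e(G)$ (here $X_0,Y_0$ are the original sides). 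Hence the process terminates with a nonempty subgraph $G'$ on sides $X',Y'$, and in $G'$ every remaining vertex of $X'$ has degree at least $\tfrac{c}{2}|Y'|$ into $Y'$ and every remaining vertex of $Y'$ has degree at least $\tfrac{c}{2}|X'|$ into $X'$ — with the exact constants to be reconciled with the target bound $cn$.

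Next I would pick any edge $xy$ of $G'$ with $x\in X'$, $y\in Y'$ (one exists since $G'$ is nonempty and has positive minimum degree), and form the double-star consisting of $x$ joined to all of $N_{G'}(y)\subseteq X'$ and $y$ joined to all of $N_{G'}(x)\subseteq Y'$. Its order is at least $\deg_{G'}(x) + \deg_{G'}(y) \ge \tfrac{c}{2}|Y'| + \tfrac{c}{2}|X'| = \tfrac{c}{2}(|X'|+|Y'|)$. To finish I need this to be at least $cn$, which does not follow directly because $|X'|+|Y'|$ can be much smaller than $n$; so the argument as sketched actually yields only a double-star of order $\tfrac{c}{2}|V(G')|$, not $cn$. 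The fix is to be less wasteful: rather than using the threshold $\tfrac{c}{2}$ on both sides, one should delete an $X$-vertex only when its degree is below $c|Y_0|$ (thresholds measured against the \emph{original} side sizes), and likewise for $Y$; then the edges removed total less than $c|Y_0|\cdot|X_0| $... — this over-counts, so instead one deletes $X$-vertices with degree $< c|Y_0|$ first in one pass (removing $< c|X_0||Y_0|$... still too much). The honest version, which I would write up, charges removals more carefully: one shows that after cleaning, the surviving graph $G'$ satisfies $\deg_{G'}(x)\ge c|Y_0|$ for $x\in X'$ and the number of surviving vertices on the $Y$ side together with a surviving $X$-vertex's neighborhood already has size $\ge c|Y_0|$, and symmetrically; then the double-star at an edge $xy$ of $G'$ has order $\ge \deg_{G'}(x)+\deg_{G'}(y) - O(1)$, and one argues $|Y'|\ge$ something comparable to $n$ is \emph{not} needed because $\deg_{G'}(x)\ge c|Y_0|$ directly and similarly $\deg_{G'}(y)\ge c|X_0|$, giving order $\ge c(|X_0|+|Y_0|) = cn$ up to the double-count of $\{x,y\}$ itself.

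The main obstacle, then, is getting the constants exactly right: a naive symmetric cleaning loses a factor of $2$, so the delicate point is to set up the deletion rule (and the edge-counting bookkeeping, keeping track of whether thresholds are against original or current side sizes, and whether to clean one side fully before the other or alternate) so that the surviving graph has each vertex of $X'$ adjacent to at least $c|Y_0|$ vertices and each vertex of $Y'$ adjacent to at least $c|X_0|$ vertices while the total number of deleted edges stays strictly below $e(G)\le c|X_0||Y_0|$. Once that bookkeeping is arranged, choosing any surviving edge $xy$ and taking the union of the two stars $N_{G'}(x)\cup N_{G'}(y)\cup\{x,y\}$ immediately yields a double-star on at least $\deg_{G'}(x)+\deg_{G'}(y) \ge c|Y_0| + c|X_0| - (\text{overlap at }x,y) \ge c|V(G)| = cn$ vertices (noting $x$ and $y$ are each counted once), contradicting the assumption and completing the proof. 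I would also double-check the degenerate cases $c=0$ (trivial, the empty double-star or a single vertex works) and the case where the cleaning deletes everything (impossible, since that would require deleting more than $e(G)$ edges).
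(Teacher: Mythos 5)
The paper does not prove Lemma~\ref{lem:LMP}; it is cited directly from Liu, Morris, and Prince (Lemma~9 in \cite{LMP}), so there is no in-paper proof to compare against. Judged on its own terms, your proposal has a genuine gap that you yourself identify but never close. The deletion argument is caught between two incompatible requirements: to guarantee a surviving edge you need the total number of removed edges to stay strictly below $e(G)\geq c|X_0||Y_0|$, which forces a threshold of roughly $\tfrac{c}{2}$ (each side can lose up to $|X_0|$ or $|Y_0|$ vertices, so the budget splits in half); but to get a double-star of order $cn$ from a surviving edge $xy$ you need $\deg_{G'}(x)\geq c|Y_0|$ and $\deg_{G'}(y)\geq c|X_0|$, which forces a threshold of $c$ against \emph{original} side sizes. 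With the aggressive threshold the edge budget is $2c|X_0||Y_0|$, which can exceed $e(G)$, so the graph may empty out; your closing claim that emptying ``would require deleting more than $e(G)$ edges'' is false for that threshold. Your ``honest version'' asserts without argument that the surviving degrees are $\geq c|Y_0|$ and $\geq c|X_0|$; this is exactly the step that the budget does not permit, and no choice of deletion order or normalization (original vs.\ current sizes, one side first vs.\ alternating) resolves the factor-of-two loss you correctly diagnosed in your first paragraph.

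The clean route is an averaging argument rather than a cleaning argument, and it sidesteps the issue by asking only for $\deg(x)+\deg(y)\geq cn$ rather than for each degree separately to clear its own threshold. Assume $e(G)>0$ (if $e(G)=0$ then $c|X||Y|=0$ and the statement is trivial or degenerate). A double-star centered at an edge $xy$ with $x\in X$, $y\in Y$ has vertex set $N(x)\cup N(y)$, which is a disjoint union since $N(x)\subseteq Y$ and $N(y)\subseteq X$, so its order is exactly $\deg(x)+\deg(y)$. Averaging over edges,
\[
\frac{1}{e(G)}\sum_{xy\in E(G)}\bigl(\deg(x)+\deg(y)\bigr)
=\frac{1}{e(G)}\left(\sum_{x\in X}\deg(x)^2+\sum_{y\in Y}\deg(y)^2\right)
\geq \frac{1}{e(G)}\left(\frac{e(G)^2}{|X|}+\frac{e(G)^2}{|Y|}\right)
= e(G)\cdot\frac{n}{|X||Y|}\geq cn,
\]
where the middle inequality is Cauchy--Schwarz (or convexity of $t\mapsto t^2$). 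Hence some edge $xy$ has $\deg(x)+\deg(y)\geq cn$, and its double-star has order at least $cn$. I would recommend replacing the deletion scheme with this two-line convexity computation; if you do want to keep a deletion-style exposition, you must first prove (not just assert) the existence of an edge both of whose endpoints clear the full threshold, and the edge-counting budget you have does not do that.
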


\begin{theorem}\label{thm:mindeglargemono}
Let $r\geq 2$, and let $0<\ep\leq \frac{1}{2}$.  
If $G$ is a graph on $n$ vertices with $\delta(G)\geq (1-\ep)n$,
then every $r$-coloring of $G$ contains either a monochromatic tree of order at least $(1-\ep)n$ or a monochromatic double-star of order at least $(1-2\ep)\frac{n}{r-1}$.
\end{theorem}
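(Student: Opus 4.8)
The plan is to dispose of the first alternative for free and then prove the second. So suppose that \emph{every} monochromatic component of $G$ has fewer than $(1-\ep)n$ vertices; I want to produce a monochromatic double-star of order at least $(1-2\ep)\tfrac{n}{r-1}$ by feeding a dense monochromatic bipartite graph into Lemma \ref{lem:LMP}. The guiding idea is a minimum-degree adaptation of Gyárfás's classical argument: if all colour classes are "disconnected enough", then across a suitable cut one colour must be responsible for a constant fraction of the (many, by the degree hypothesis) crossing edges, and Lemma \ref{lem:LMP} turns that density into a double-star. I may as well assume $\ep<\tfrac12$, since for $\ep=\tfrac12$ the required double-star has order $0$.

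First I would fix one colour, say colour $r$, and consider the components $A_1,\dots,A_k$ of the colour-$r$ subgraph (if colour $r$ is absent, these are the $n$ singletons). Each $A_j$ has fewer than $(1-\ep)n<n$ vertices, so $k\ge 2$. Sorting the $A_j$ by size, I would choose a nonempty union $A$ of some of them with $|A|\le n/2$: if the largest component already has size at most $n/2$, take $A$ to be that component; otherwise take $A$ to be the union of all the others, which is nonempty (as $k\ge2$) and has size $n-|A_1|<n/2$. Put $B=V(G)\setminus A$, so $|A|+|B|=n$ and, by construction, every edge between $A$ and $B$ has a colour in $[r-1]$.

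Next, the minimum degree condition forces many such crossing edges: each $v\in A$ has at least $(1-\ep)n-|A|$ neighbours in $B$, hence $e(A,B)\ge |A|\big((1-\ep)n-|A|\big)$, which is positive because $|A|\le n/2<(1-\ep)n$. By pigeonhole there is a colour $j\in[r-1]$ with $e_j(A,B)\ge \tfrac{1}{r-1}|A|\big((1-\ep)n-|A|\big)$. Applying Lemma \ref{lem:LMP} to the $A,B$-bipartite graph formed by the colour-$j$ edges between $A$ and $B$ (an $n$-vertex bipartite graph of density $c:=e_j(A,B)/(|A||B|)$) yields a monochromatic double-star of order at least
\[
cn \;=\; \frac{e_j(A,B)}{|A||B|}\,n \;\ge\; \frac{n\big((1-\ep)n-|A|\big)}{(r-1)(n-|A|)}.
\]
To finish I would check that this is at least $(1-2\ep)\tfrac{n}{r-1}$: cancelling $n/(r-1)$ and clearing the positive denominator $n-|A|$, this is equivalent to $\ep\big(n-2|A|\big)\ge 0$, which holds precisely because $|A|\le n/2$. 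The resulting double-star is a subgraph of $G$ all of whose edges have colour $j$, as required.

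I expect there is no genuine obstacle here; the only point needing care is the simultaneous bookkeeping around the set $A$ — it must be a nonempty union of colour-$r$ components (so the cut $(A,B)$ avoids colour $r$) and satisfy $|A|\le n/2$ (so the closing inequality holds) and $|A|<(1-\ep)n$ (so $e(A,B)>0$) — together with the minor boundary cases ($\ep=\tfrac12$ trivial, colour $r$ possibly absent, $n$ small). Everything else is the one double-counting bound on $e(A,B)$ and a single invocation of Lemma \ref{lem:LMP}.
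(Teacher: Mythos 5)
Your proof is correct and follows essentially the same route as the paper's: pick a cut across which one fixed colour is absent (a union of monochromatic components of that colour), use the minimum-degree hypothesis to lower-bound the number of crossing edges, take the densest of the remaining $r-1$ colours, and apply Lemma~\ref{lem:LMP}. The only cosmetic difference is that the paper takes the cut to be the vertex set of a single largest monochromatic component and counts crossing edges from whichever side is smaller, whereas you arrange in advance a union $A$ of colour-$r$ components with $|A|\le n/2$; both choices yield the same density bound $(1-2\ep)/(r-1)$ and hence the same double-star size.
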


\begin{proof}
Let $H$ be the largest monochromatic tree in $G$, say of color $r$ and suppose that $|V(H)|<(1-\ep)n$.  Set $X=V(H)$ and $Y=V(G)\setminus X$ and let $B=G[X,Y]$ be the bipartite graph induced by the bipartition $\{X,Y\}$.  Without loss of generality suppose $|X|\geq |Y|$ and note that $e(B)\geq |Y|(|X|-\ep n)\geq (1-2\ep)|X||Y|$. Note that by the maximality of $H$, there are no edges of color $r$ in $B$, so at least $(1-2\ep)\frac{1}{r-1}|X||Y|$ of the edges of $B$ are say color 1.  So by Lemma \ref{lem:LMP}, we have a monochromatic double-star with at least $(1-2\ep)\frac{n}{r-1}$ vertices. 

\end{proof}

\section{Sparse Regularity and Basic Applications}
\label{sec:srlba}

We will use of a variant of Szemer\'edi's regularity lemma \cite{Szem} for sparse graphs, which was proved independently by Kohayakawa \cite{Koh} and R\"odl (see \cite{Con}) and later generalized by Scott \cite{Scott}.

Say that a $U,V$-bipartite graph is \tbf{weakly-$(\ep, q)$-regular} if for all $U'\subseteq U$ and $V'\subseteq V$ with $|U'|\geq \ep |U|$ and $|V'|\geq \ep |V|$, $e(U', V')\geq q$.  
Given disjoint sets $X,Y$ and $0<p\leq 1$, define the \tbf{$p$-density} of $(X,Y)$, denoted $d_p(X,Y)$, by $$d_p(X,Y) = \frac{e(X,Y)}{p|X||Y|}.$$  We say the bipartite graph $G[U,V]$ induced by disjoint sets $U$ and $V$ is \tbf{$(\ep,p)$-regular} or that $(U,V)$ is an \tbf{$(\ep,p)$-regular pair} if for all subsets $U' \subseteq U$, $V'\subseteq V$ with $|U'| \ge \ep|U|$,  $|V'| \ge \ep|V|$ we have $|d_p(U',V') - d_p(U,V)| \le \ep$. Given a graph $G=(V,E)$, a partition $\{V_1, \ldots, V_k\}$ of $V$ is said to be an \tbf{$(\ep, p)$-regular partition} if it is an equitable partition (i.e. $| |V_i| - |V_j|  | \le 1$ for all $i,j\in [k]$) and all but at most $\ep \binom{k}{2}$ pairs $(V_i, V_j)$ induce $(\ep, p)$-regular pairs.

%

We will use the following $r$-colored version of the sparse regularity lemma due to Scott \cite[Theorem 4.1]{Scott}.
 
\begin{lemma}\label{lem:ssrlc}
For every $\ep>0$ and $m,r \geq 1$, there exists $M$ such that if $G_1,\ldots, G_r$ are edge-disjoint graphs on vertex set $V$ with $|V|\geq m$ and where $p_i=\frac{e(G_i)}{\binom{|V|}{2}}>0$ is the density of $G_i$ for all $i\in [r]$, there exists a partition $\{V_1, \dots, V_k\}$ of $V$ with $m \le k \le M$ such that for all $i\in [r]$, $\{V_1, \dots, V_k\}$ is an $(\ep, p_i)$-regular partition of $G_i$. 
\end{lemma}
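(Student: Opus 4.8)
The plan is to derive this multicolored statement from the single-graph sparse regularity lemma by a simultaneous energy-increment argument, exactly as one derives the multicolored classical regularity lemma from the one-colored version. First I would recall the mean-square $p$-density (``energy'') of a partition $\mathcal{P}=\{V_1,\dots,V_k\}$ of $V$ with respect to a graph $H$ of density $p$, namely
$$q_H(\mathcal{P}) = \frac{1}{|V|^2}\sum_{a,b\in[k]}|V_a|\,|V_b|\,d_p(V_a,V_b)^2,$$
and record the two standard facts: (i) $q_H$ never decreases when $\mathcal{P}$ is refined; and (ii) (defect form of Cauchy--Schwarz) if $\mathcal{P}$ has more than $\ep\binom{k}{2}$ pairs that fail to be $(\ep,p)$-regular for $H$, then there is a refinement $\mathcal{P}'$ of $\mathcal{P}$, obtained by splitting each part into at most $2^{k}$ pieces, with $q_H(\mathcal{P}')\ge q_H(\mathcal{P})+\ep^5/4$.

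Next I would run the following loop, starting from an arbitrary equitable partition of $V$ into $m$ parts (possible since $|V|\ge m$). If the current partition is $(\ep,p_i)$-regular for $G_i$ for every $i\in[r]$, stop. Otherwise pick a color $i$ witnessing failure and replace $\mathcal{P}$ by the refinement $\mathcal{P}'$ from fact (ii) applied to $G_i$; by fact (i), $q_{G_j}(\mathcal{P})$ does not decrease for any $j$, while $q_{G_i}$ jumps by a definite amount. Hence the total energy $\Phi(\mathcal{P})=\sum_{i=1}^r q_{G_i}(\mathcal{P})$ increases by at least $\ep^5/4$ at each step, so the loop halts after a bounded number of steps; since each step multiplies the number of parts by at most $2^{k}$ with $k$ itself bounded at each stage by the iteration count, the final partition has $k\le M$ parts for some $M=M(\ep,m,r)$, while $k\ge m$ since we never coarsen. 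A final cosmetic step equitizes: cut every part into blocks of a common size $t$ chosen small relative to $|V|/M$ and distribute the fewer than $Mt$ leftover vertices arbitrarily; this perturbs every $d_{p_i}(\cdot,\cdot)$ by $o(1)$, so after mildly adjusting constants it preserves $(\ep,p_i)$-regularity for all $i$ at once.

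The one genuinely nontrivial point --- the reason this is ``due to Scott'' rather than folklore --- is that in the sparse setting $q_H(\mathcal{P})$ is not automatically bounded by $1$ the way it is in the dense case: a pair $(V_a,V_b)$ spanning few vertices can have $p$-density as large as $1/p$, so a priori $\Phi$ could grow without bound and the loop might not terminate. I would handle this exactly as Scott does for a single graph: the contribution to $q_H(\mathcal{P})$ coming from pairs of $p$-density exceeding a large threshold $K$ is controlled (such pairs carry at most a $1/K$ fraction of the $p$-weighted pair mass), so one works with a truncated energy --- equivalently, one discards a bounded number of exceptional clusters each round --- and the index-increment argument goes through with $\Phi$ bounded by $O(r/\ep)$. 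This boundedness step is the only place anything beyond the classical proof is needed; the rest is a verbatim multicolored adaptation, and since Scott's Theorem~4.1 is already stated for finitely many graphs simultaneously, in the write-up I would simply quote it rather than repeat the argument.
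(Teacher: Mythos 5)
The paper does not prove this lemma at all: it simply cites Scott's Theorem~4.1, which is already stated for a finite family of graphs simultaneously, and your proposal correctly ends by doing exactly the same. Your preamble --- the simultaneous energy-increment argument and, in particular, the observation that the real difficulty in the sparse setting is that the mean-square $p$-density is not a priori bounded (which Scott circumvents, whereas the earlier Kohayakawa--R\"odl formulation assumed an upper-uniformity/``no dense spots'' hypothesis to get boundedness) --- is an accurate account of why the citation is to Scott rather than to the older sparse regularity lemma, so this matches the paper's approach.
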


Let $G$ be an $r$-colored graph on $n$ vertices with $p=\frac{e(G)}{\binom{n}{2}}$ and $r$-coloring $\{G_1, \dots, G_r\}$, where for all $i\in [r]$, $p_i=\frac{e(G_i)}{\binom{n}{2}}$; note that $p=\sum_{i=1}^rp_i$.  We define the \tbf{$(\ep, p, \delta)$-reduced graph} $\Gamma$ as follows:  Let $\{V_1, \dots, V_k\}$ be the partition of $G$ obtained from an application of Lemma \ref{lem:ssrlc}.  Let $V(\Gamma)=\{V_1, \dots, V_k\}$ and say that $\{V_i, V_j\}$ is a $c$-colored edge of $\Gamma$ if the $p_c$-density of $(V_i, V_j)$ in $G_c$ is at least $\delta$.  Note that $\Gamma$ is a (possibly) multicolored graph.

The following simple lemma is typically applied to the reduced graph obtained after an application of Lemma \ref{lem:ssrlc}.

\begin{lemma}\label{lem:nicereduced}
Let $\alpha>0$.  If $H$ is a graph on $k\geq \frac{2}{\sqrt{\alpha}}$ vertices with $e(H)\geq (1-\alpha)\binom{k}{2}$, then there exists a subgraph $H'\subseteq H$ such that $|V(H')|\geq (1-\sqrt{\alpha})k$ and $\delta(H')\geq (1-2\sqrt{\alpha})k$.  
\end{lemma}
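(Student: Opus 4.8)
The plan is to delete the small number of low-degree vertices from $H$ and show that the induced subgraph $H'$ on the remaining vertices has the required properties. First, I would let $A = \{v \in V(H) : \deg_H(v) < (1 - 2\sqrt{\alpha})k\}$ be the set of ``bad'' vertices and set $H' = H - A$. The first task is to bound $|A|$. Counting non-edges: each vertex of $A$ is missing more than $(k-1) - (1 - 2\sqrt{\alpha})k \ge 2\sqrt{\alpha}k - 1$ edges, so $A$ accounts for more than $|A|(2\sqrt{\alpha}k - 1)/2$ non-edges of $H$ (dividing by $2$ in case both endpoints are bad, which only helps the count go the wrong way — so more carefully, $\sum_{v \in A} (\text{non-degree of } v) \le 2 \cdot (\text{number of non-edges})$, giving $|A|(2\sqrt{\alpha}k - 1) \le 2\alpha\binom{k}{2} = \alpha k(k-1)$). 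Since $k \ge 2/\sqrt{\alpha}$, we have $2\sqrt{\alpha}k - 1 \ge \sqrt{\alpha}k$ (as $\sqrt{\alpha}k \ge 2$), hence $|A| \le \alpha k(k-1)/(\sqrt{\alpha}k) = \sqrt{\alpha}(k-1) \le \sqrt{\alpha}k$. This gives $|V(H')| = k - |A| \ge (1 - \sqrt{\alpha})k$, which is the first conclusion.

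For the minimum degree bound, take any $v \in V(H')$. By definition of $A$, $\deg_H(v) \ge (1 - 2\sqrt{\alpha})k$. When we pass to $H'$ we lose at most $|A| \le \sqrt{\alpha}k$ neighbors, so na\"ively $\deg_{H'}(v) \ge (1 - 2\sqrt{\alpha})k - \sqrt{\alpha}k = (1 - 3\sqrt{\alpha})k$, which is slightly weaker than claimed. To recover the stated bound I would instead define the threshold defining $A$ with a bit more room — or, cleaner, observe that the two conclusions should be proved with a single consistent choice. The right move is to set $A = \{v : \deg_H(v) < (1 - \alpha')k\}$ for a suitable $\alpha'$ and track constants so that both $|A| \le \sqrt{\alpha}k$ and $(1-\alpha')k - |A| \ge (1 - 2\sqrt{\alpha})k$ hold; choosing the non-edge counting threshold at ``missing at least $\sqrt{\alpha}k$ edges'' makes $A$ account for $\ge |A|\sqrt{\alpha}k/2$ non-edges out of $\le \alpha k^2/2$ total, so $|A| \le \sqrt{\alpha}k$, while each surviving vertex has $H$-degree $\ge k - \sqrt{\alpha}k$ and hence $H'$-degree $\ge k - \sqrt{\alpha}k - \sqrt{\alpha}k = (1 - 2\sqrt{\alpha})k$. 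This closes the gap.

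The main obstacle — really the only subtlety — is the bookkeeping of which threshold to use when defining the bad set $A$, so that the loss incurred by the two steps (discarding $A$, and then edges to $A$) adds up to exactly the claimed $2\sqrt{\alpha}$ rather than $3\sqrt{\alpha}$; the hypothesis $k \ge 2/\sqrt{\alpha}$ is exactly what is needed to absorb the additive ``$-1$'' terms coming from $\binom{k}{2}$ versus $k^2/2$ and from $\deg$ versus $k-1$. Everything else is a routine double-counting of non-edges and the union bound is not even needed.
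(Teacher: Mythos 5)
Your final version (discard the vertices with non-degree at least $\sqrt{\alpha}k$, bound their number by double-counting non-edges) is essentially the paper's proof; the only cosmetic difference is the threshold, which the paper takes to be $\tfrac{\sqrt{\alpha}}{2}k$ rather than $\sqrt{\alpha}k$, and that smaller threshold is precisely what lets the hypothesis $k\ge 2/\sqrt{\alpha}$ absorb the additive $-1$ coming from $\deg_H(v)\le k-1$ so the final degree bound is $(1-2\sqrt{\alpha})k$ exactly rather than modulo rounding. Your first attempt (defining $A$ by a degree threshold of $(1-2\sqrt{\alpha})k$) indeed loses a factor and lands at $(1-3\sqrt{\alpha})k$, and you correctly identified this and switched to the non-degree threshold, which is the right move.
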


\begin{proof}
As there are at most $\alpha\binom{k}{2}\leq \alpha\frac{k^2}{2}$ non-edges, there are at most $\sqrt{\alpha}k$ vertices $V'$ which have at least $\frac{\sqrt{\alpha}}{2}k$ non-neighbors.  Let $H'=H[V']$.  We have $|V(H')|\geq k-\sqrt{\alpha}k= (1-\sqrt{\alpha})k$ and $\delta(H')\geq k-1-\frac{\sqrt{\alpha}}{2}k-\sqrt{\alpha}k\geq (1-2\sqrt{\alpha})k$.
\end{proof}

We will apply Lemma \ref{lem:ssrlc} to an $r$-colored graph $G\sim \gnp$ with $p=\frac{\omega(1)}{n}$ to get a partition $\{V_1, \dots, V_k\}$.  We will only require the very weak condition that each pair which is $(\ep, p_i)$-regular for all $i\in [r]$ is weakly-$(\ep, 1)$-regular in some color $c\in [r]$.  This will follow as a simple consequence of the union bound and the Chernoff bound, which we state first for convenience (see e.g.~Corollary 21.7 in \cite{FK}).  

\begin{theorem}[Chernoff]Let $X$ be a binomially distributed random variable.  Then for $\alpha>0$, 
\[\Pr{X\leq (1-\alpha)\E{X}}\leq \exp(-\alpha^2\E{X}/2).\]
\end{theorem}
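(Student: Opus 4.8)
This is the classical Chernoff lower-tail bound, so the plan is simply to run the standard exponential-moment (Laplace transform) argument. Write $X \sim \mathrm{Bin}(n,q)$ as a sum $X = X_1 + \dots + X_n$ of independent Bernoulli$(q)$ random variables and set $\mu := \E{X} = nq$. If $\alpha \ge 1$ then $(1-\alpha)\mu \le 0$ and the bound is immediate (it is vacuous when $\mu > 0$, and equals $\Pr{X=0} = (1-q)^n \le e^{-\mu} \le e^{-\mu/2}$ when $\alpha = 1$), so I would assume $0 < \alpha < 1$ from here on.

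First I would apply Markov's inequality to the nonnegative random variable $e^{-tX}$ for a parameter $t > 0$ to be chosen later:
\[
\Pr{X \le (1-\alpha)\mu} \;=\; \Pr{e^{-tX} \ge e^{-t(1-\alpha)\mu}} \;\le\; e^{t(1-\alpha)\mu}\,\E{e^{-tX}}.
\]
Second, I would bound the moment generating function using independence together with $1+x \le e^x$:
\[
\E{e^{-tX}} \;=\; \of{1 + q(e^{-t}-1)}^{n} \;\le\; \exp\of{nq(e^{-t}-1)} \;=\; \exp\of{\mu(e^{-t}-1)},
\]
which gives $\Pr{X \le (1-\alpha)\mu} \le \exp\of{\mu\of{e^{-t}-1+t(1-\alpha)}}$ for every $t > 0$.

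Third, I would optimize the exponent over $t$: it is minimized at $t = \ln\frac{1}{1-\alpha} > 0$, yielding $\Pr{X \le (1-\alpha)\mu} \le \exp\of{-\mu\of{\alpha + (1-\alpha)\ln(1-\alpha)}}$. To recover the stated form it then suffices to verify the elementary inequality $\alpha + (1-\alpha)\ln(1-\alpha) \ge \alpha^2/2$ on $(0,1)$; expanding $\ln(1-\alpha) = -\sum_{k\ge1}\alpha^k/k$ and simplifying gives $\alpha + (1-\alpha)\ln(1-\alpha) = \sum_{k\ge 2}\frac{\alpha^k}{k(k-1)} \ge \frac{\alpha^2}{2}$, which finishes the proof. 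There is no genuine obstacle here — the result is textbook — the only points needing a little care are keeping the direction of this last inequality straight and disposing of the degenerate cases $\alpha \ge 1$ and $\mu = 0$ at the outset.
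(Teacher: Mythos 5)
Your proof is correct. Note, though, that the paper does not prove this statement at all: it is cited as a standard result (``see e.g.~Corollary 21.7 in \cite{FK}''), so there is no in-paper argument to compare against. What you give is the usual exponential-moment (Markov on $e^{-tX}$) derivation, with the optimization $t=\ln\frac{1}{1-\alpha}$ and the power-series verification that $\alpha+(1-\alpha)\ln(1-\alpha)=\sum_{k\ge 2}\frac{\alpha^k}{k(k-1)}\ge \frac{\alpha^2}{2}$; all steps check out, including the dispatch of $\alpha\ge 1$ and $\mu=0$. This is exactly the proof one finds in standard references, so it matches the spirit of the citation.
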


\begin{lemma}\label{lem:edgedist}
For all $\eta>0$, there exists $C$ such that if $p\geq \frac{C}{n}$, then a.a.s.~$G\sim \gnp $ has the property that for all $X, Y\subseteq V(G)$ with $X\cap Y=\emptyset$ and $|X|, |Y|\geq \eta n$, $$3p|X||Y|/2\geq e(X,Y)\geq p|X||Y|/2.$$
\end{lemma}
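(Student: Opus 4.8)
The plan is to prove Lemma \ref{lem:edgedist} by a standard union bound over all pairs of disjoint vertex sets of size at least $\eta n$, applying the Chernoff bound to control the number of edges between each fixed pair. First I would fix $X, Y \subseteq V(G)$ disjoint with $|X| = a \geq \eta n$ and $|Y| = b \geq \eta n$; then $e(X,Y)$ is a binomial random variable with mean $pab$. Since $pab \geq p\eta^2 n^2 \geq C\eta^2 n$, taking $C$ large makes this mean grow linearly in $n$. Applying the Chernoff bound with $\alpha = 1/2$ gives $\Pr{e(X,Y) \leq pab/2} \leq \exp(-pab/8) \leq \exp(-C\eta^2 n/8)$, and a symmetric one-sided bound (the upper-tail version of Chernoff, which the paper can cite or which follows from the same reference) gives $\Pr{e(X,Y) \geq 3pab/2} \leq \exp(-pab/12) \leq \exp(-C\eta^2 n / 12)$.

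Next I would take the union bound over all choices of $(X,Y)$. The number of such pairs is at most $4^n$ (choose for each vertex whether it is in $X$, in $Y$, or in neither), so the probability that \emph{some} pair violates either inequality is at most $4^n \cdot 2\exp(-C\eta^2 n/12) = 2\exp\big((\log 4 - C\eta^2/12)n\big)$. Choosing $C = C(\eta)$ large enough that $C\eta^2/12 > \log 4$, say $C \geq 12\log 4 / \eta^2$ plus whatever slack the two Chernoff applications need, makes this bound tend to $0$ as $n \to \infty$. Hence a.a.s.\ every disjoint pair $X, Y$ with $|X|, |Y| \geq \eta n$ satisfies $p|X||Y|/2 \leq e(X,Y) \leq 3p|X||Y|/2$, as claimed.

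One technical point worth flagging: the Chernoff statement quoted in the excerpt only gives the \emph{lower} tail $\Pr{X \leq (1-\alpha)\E{X}} \leq \exp(-\alpha^2 \E{X}/2)$, so to get the upper bound $e(X,Y) \leq 3p|X||Y|/2$ I would invoke the complementary upper-tail Chernoff inequality $\Pr{X \geq (1+\alpha)\E{X}} \leq \exp(-\alpha^2\E{X}/3)$ (valid for $0<\alpha\le 1$), which is standard and available in the same reference \cite{FK}; alternatively one could state Lemma \ref{lem:edgedist} with only the lower bound if that is all the later proofs need, but keeping both bounds costs nothing.

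There is really no serious obstacle here — this is a routine first-moment/concentration argument. The only thing to be careful about is bookkeeping: making sure $C$ is chosen after $\eta$ and large enough to beat the $4^n$ (equivalently $2^{2n}$) entropy factor from the union bound for \emph{both} tail estimates simultaneously, and noting that the bound $|X|,|Y|\ge \eta n$ is exactly what converts the trivial estimate $pab \ge p\eta^2 n^2$ into something of order $Cn$ so that the exponential decay wins. Everything else is a direct substitution into the Chernoff bound.
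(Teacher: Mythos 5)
Your argument is correct and is precisely the standard Chernoff-plus-union-bound proof that the paper leaves to the reader (the lemma is stated without proof, immediately after the Chernoff bound is quoted "for convenience"). Your observation that the paper only records the lower-tail Chernoff inequality, so the upper bound $e(X,Y)\le 3p|X||Y|/2$ needs the companion upper-tail estimate, is a fair point; that inequality is indeed in the cited reference \cite{FK}, and your bookkeeping of the $3^n\le 4^n$ entropy factor against the $\exp(-\Theta(C\eta^2 n))$ decay is exactly how the choice of $C=C(\eta)$ should be made.
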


\begin{lemma}\label{lem:weakreginsomec}
Suppose we have applied Lemma \ref{lem:ssrlc} with $0<\ep<\frac{1}{2r}$ to an $r$-colored graph $G\sim \gnp$ with $p=\frac{\omega(1)}{n}$ to get a partition $\{V_1, \dots, V_k\}$.  Then every pair $(V_i,V_j)$ which is $(\eps, p_l)$-regular for all $l\in[r]$ is weakly-$(\eps, 1)$-regular in some color $c\in[r]$.
\end{lemma}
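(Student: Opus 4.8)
The plan is to unpack the definitions and chase the inequalities. Fix a pair $(V_i, V_j)$ that is $(\eps, p_l)$-regular for every $l \in [r]$, and let $s = |V_i| = |V_j|$ (up to the usual $\pm 1$ from equitability, which I will suppress). I want to find a color $c$ such that for all $U \subseteq V_i$, $W \subseteq V_j$ with $|U| \geq \eps s$ and $|W| \geq \eps s$, we have $e_c(U,W) \geq 1$ — that is, $e_c(U,W) > 0$. The key point is that, since $p = \frac{\omega(1)}{n}$ and the $V_i$ are of size roughly $n/k$ with $k$ bounded, we have $p|U||W| \to \infty$; so it suffices to show that $e_c(U,W)$ is bounded below by a positive constant multiple of $p|U||W|$ for some $c$.

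First I would invoke Lemma~\ref{lem:edgedist} with $\eta$ chosen appropriately (roughly $\eps/k$, using that $|U|, |W| \geq \eps s \geq \eps' n$ for some $\eps' > 0$ since $k \leq M$ is bounded): a.a.s.\ $e(U,W) \geq p|U||W|/2$ for all such $U, W$ simultaneously. Next, $e(U,W) = \sum_{l=1}^r e_l(U,W)$, so by pigeonhole there is a color $c = c(U,W)$ with $e_c(U,W) \geq \frac{p}{2r}|U||W|$, hence $d_{p_c}(U,W) = \frac{e_c(U,W)}{p_c |U||W|} \geq \frac{p}{2r p_c} \geq \frac{1}{2r}$ (using $p_c \leq p$). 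A priori $c$ depends on $U, W$; to remove this dependence I would use the $(\eps, p_c)$-regularity. Applying regularity of $G_c$ to the \emph{full} pair $(V_i, V_j)$ is not directly available, but I can argue: if $d_{p_c}(V_i, V_j) < \frac{1}{2r} - \eps$, then $(\eps, p_c)$-regularity forces every large subpair to have $p_c$-density less than $\frac{1}{2r}$, contradicting the existence of $U, W$ with $d_{p_c}(U,W) \geq \frac{1}{2r}$. Hence $d_{p_c}(V_i, V_j) \geq \frac{1}{2r} - \eps > 0$ for every color $c$ that arises as $c(U,W)$ for some large subpair. Since there are only $r$ colors, I can fix such a $c$ once and for all (any one works), and then $(\eps, p_c)$-regularity gives, for every large $U, W$, that $d_{p_c}(U,W) \geq d_{p_c}(V_i, V_j) - \eps \geq \frac{1}{2r} - 2\eps > 0$ by the hypothesis $\eps < \frac{1}{2r}$. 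Therefore $e_c(U,W) \geq (\frac{1}{2r} - 2\eps) p_c |U||W|$.

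Finally I would conclude: since $|U|, |W| \geq \eps s \geq \eps' n$ and $p_c$ could in principle be small, I need $p_c |U||W| \geq 1$. This is where one must be slightly careful — if one color class $G_c$ is extremely sparse, $p_c |U||W|$ might not tend to infinity. The cleanest fix is to choose $c$ among the colors for which $p_c$ is not too small: from $e(U,W) \geq p|U||W|/2$ and $p = \sum p_l$, the pigeonhole color $c(U,W)$ satisfies $e_c(U,W) \geq \frac{p}{2r}|U||W|$, and in particular $e_c(U,W) \geq \frac{p}{2r}(\eps' n)^2 = \omega(1)$ since $p = \frac{\omega(1)}{n}$ and $n \to \infty$; so that color's edge count between $U$ and $W$ is genuinely large, not merely positive. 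Pushing this through the regularity argument of the previous paragraph (which transfers a density bound from one large subpair to the whole pair to all large subpairs), I get that the \emph{fixed} color $c$ satisfies $e_c(U,W) = \omega(1) \geq 1$ for all large $U, W$, which is exactly weak-$(\eps,1)$-regularity in color $c$.

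The main obstacle I anticipate is the quantifier juggling in the middle step: the pigeonhole color depends on the subpair, and one must use regularity to launder this into a single color that works uniformly. I expect the argument to go through because $(\eps, p_c)$-regularity propagates a density estimate from any single large subpair to all large subpairs (via the triangle inequality through $d_{p_c}(V_i,V_j)$), so one witnessing subpair per color suffices, and there are only $r$ colors. The size bookkeeping ($|V_i| \asymp n/k$, $k \leq M$ bounded, so ``$\eps$ fraction of a part'' is still ``$\Omega(1)$ fraction of $V$'') is routine given that Lemma~\ref{lem:ssrlc} outputs a bounded number of parts.
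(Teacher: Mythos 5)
Your approach is the same core idea as the paper's---Lemma~\ref{lem:edgedist} plus pigeonhole plus $(\eps,p_c)$-regularity---but you introduce an unnecessary detour that costs you a factor of $\eps$ and leaves a genuine gap under the stated hypothesis.

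Specifically, you claim that ``applying regularity of $G_c$ to the full pair $(V_i,V_j)$ is not directly available'' and therefore pigeonhole on a \emph{subpair} $(U,W)$, obtaining a subpair-dependent color $c(U,W)$, which you then try to launder into a single color by pushing the density estimate down to the full pair (losing $\eps$) and back up to all subpairs (losing another $\eps$). This lands you at $d_{p_c}(U,W)\geq \frac{1}{2r}-2\eps$, which you assert is positive ``by the hypothesis $\eps<\frac{1}{2r}$''---but it isn't: you actually need $\eps<\frac{1}{4r}$, so under the lemma's hypothesis your final lower bound could be nonpositive and the argument does not close.

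The obstruction you perceived doesn't exist. Lemma~\ref{lem:edgedist} applies to \emph{any} two disjoint sets of linear size, and $V_i,V_j$ themselves qualify (each has size $\sim n/k$ with $k\leq M$ bounded). So one should apply it to the full pair first, getting $e(V_i,V_j)\geq p|V_i||V_j|/2$, pigeonhole once to fix a color $c$ with $e_c(V_i,V_j)\geq \frac{p}{2r}|V_i||V_j|$, hence $d_{p_c}(V_i,V_j)\geq \frac{p}{2rp_c}$, and only then invoke $(\eps,p_c)$-regularity once to push down to subpairs: $d_{p_c}(V_i',V_j')\geq \frac{p}{2rp_c}-\eps$, i.e.\ $e_c(V_i',V_j')\geq(\frac{p}{2r}-\eps p_c)|V_i'||V_j'|$, which is strictly positive precisely when $\eps<\frac{1}{2r}$ (using $p_c\leq p$). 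This is the paper's proof, and it pays the $\eps$ toll only once. Also, your third paragraph's worry about $p_c|U||W|\geq 1$ is not needed: $e_c(U,W)$ is a nonnegative integer, so a strictly positive lower bound already gives $e_c(U,W)\geq 1$, which is all that weak-$(\eps,1)$-regularity asks for.
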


\begin{proof}
Suppose the pair $(V_i, V_j)$ is $(\ep, p_l)$-regular for all $l\in [r]$.  Lemma \ref{lem:edgedist} implies that $e(V_i, V_j)\geq p|V_i||V_j|/2$ and thus for some color $c\in [r]$, we have $e_c(V_i, V_j)\geq p|V_i||V_j|/2r$.  So if $V_i'\subseteq V_i$ and $V_j'\subseteq V_j$ with $|V_i'|, |V_j'|\geq \ep\frac{n}{k}$, then by $(\eps, p_c)$-regularity, we have
\[\frac{e_c(V_i', V_j')}{p_c|V_i'||V_j'|} = d_{p_c}(V_i',V_j') \ge d_{p_c}(V_i, V_j) - \eps = \frac{e_c(V_i, V_j)}{p_c|V_i||V_j|} - \eps \ge \frac{p}{2rp_c} - \eps\]
and so 
\[e_c(V_i', V_j')\geq (\frac{p}{2r}-\ep p_c)|V_i'||V_j'|>0,\]
and thus $(V_i, V_j)$ is weakly-$(\ep, 1)$-regular in color $c$.  
\end{proof}

\subsection{Nearly spanning tree partitions}

\begin{lemma}\label{lem:leafytree}
Let $0 < \eps < 1/3$ and let $G$ be a $U,V$-bipartite graph.  If $G$ is weakly $(\ep, 1)$-regular, then $G$ contains a tree $T$ with leaf set $L$ such that 
\begin{enumerate}
\item $|V(T)\cap U|\geq (1-\ep)|U|$, $|V(T)\cap V|\geq (1-\ep)|V|$, and 
\item $|L\cap U|\geq (1-4\ep)|U|$, $|L\cap V|\geq (1-4\ep)|V|$.
\end{enumerate}
\end{lemma}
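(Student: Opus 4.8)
The plan is to realize $T$ as a small connected ``skeleton'' inside $G$, with almost everything else attached as pendant leaves. First I would fix sets $I_U\subseteq U$ and $I_V\subseteq V$ with
\[\eps|U|\le|I_U|\le 3\eps|U|,\qquad \eps|V|\le|I_V|\le 3\eps|V|,\]
such that $G[I_U,I_V]$ is connected, and then define $T$ by taking a spanning tree of $G[I_U,I_V]$, attaching each vertex of $N(I_V)\setminus I_U$ as a leaf to a neighbour in $I_V$, and attaching each vertex of $N(I_U)\setminus I_V$ as a leaf to a neighbour in $I_U$ (writing $N(S)=\bigcup_{v\in S}N(v)$). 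Granting such $I_U,I_V$, the conclusion follows from weak regularity alone: since $|I_V|\ge\eps|V|$, no $\eps|U|$ vertices of $U$ can all avoid $N(I_V)$, so $|N(I_V)|>(1-\eps)|U|$, and symmetrically $|N(I_U)|>(1-\eps)|V|$. Hence $|V(T)\cap U|\ge|N(I_V)|>(1-\eps)|U|$ and $|V(T)\cap V|\ge|N(I_U)|>(1-\eps)|V|$, while the leaf set $L$ contains $N(I_V)\setminus I_U$ and $N(I_U)\setminus I_V$, so $|L\cap U|\ge|N(I_V)|-|I_U|>(1-4\eps)|U|$ and likewise $|L\cap V|>(1-4\eps)|V|$.

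So the content is in building the skeleton. Since both the hypothesis and the conclusion are symmetric in $U$ and $V$, I may assume $|U|\le|V|$. I would take an arbitrary $A\subseteq U$ with $|A|=\lceil 3\eps|U|\rceil$ and examine $G[A,N(A)]$: its components have the form $(A_i,N(A_i))$, where the $A_i$ partition $A$ and the sets $N(A_i)\subseteq N(A)$ are pairwise disjoint. I claim some $A_i$ has $|A_i|\ge\eps|U|$. If not, I could greedily group the components into two parts $J,J'$ with $\eps|U|\le\sum_{i\in J}|A_i|<2\eps|U|$ (hence also $\sum_{i\in J'}|A_i|>|A|-2\eps|U|\ge\eps|U|$); weak $(\eps,1)$-regularity applied to $\bigcup_{i\in J}A_i$ and to $\bigcup_{i\in J'}A_i$ would force $|\bigcup_{i\in J}N(A_i)|>(1-\eps)|V|$ and $|\bigcup_{i\in J'}N(A_i)|>(1-\eps)|V|$, and since these two unions are disjoint this would give $|V|>2(1-\eps)|V|>|V|$ using $\eps<1/3$, a contradiction. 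Fixing such an $A^*:=A_i$, I set $I_U:=A^*$; then $\eps|U|\le|I_U|\le 3\eps|U|$, the pair $(I_U,N(I_U))$ induces a connected graph, and applying weak regularity to $I_U$ gives $|N(I_U)|>(1-\eps)|V|$.

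It remains to trim $N(I_U)$ down to a small set that still holds $I_U$ together. I would take a spanning tree of the connected graph $G[I_U,N(I_U)]$ and repeatedly delete leaves lying in $N(I_U)$; the tree that remains contains all of $I_U$, has all of its leaves in $I_U$, and so its $V$-part $W_0\subseteq N(I_U)$ consists of vertices of degree $\ge 2$, whence counting edges gives $|W_0|\le|I_U|-1\le 3\eps|U|\le 3\eps|V|$ --- this is where $|U|\le|V|$ is used. I then let $I_V$ be $W_0$ together with (if $|W_0|<\lceil\eps|V|\rceil$) enough extra vertices of $N(I_U)$ to bring its size up to $\lceil\eps|V|\rceil$; there is room since $|N(I_U)|>(1-\eps)|V|\ge\eps|V|$, and then $\eps|V|\le|I_V|\le 3\eps|V|$, and $G[I_U,I_V]$ is connected because it contains the connected graph $G[I_U,W_0]$ and every extra vertex of $I_V$ lies in $N(I_U)$ and hence attaches to $I_U$. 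Feeding this $I_U,I_V$ into the first paragraph completes the proof. As elsewhere in the paper I would suppress floors and ceilings, which only contribute lower-order terms.

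The main obstacle is the middle step: extracting a substantial \emph{connected} piece from an arbitrary $\eps|U|$-subset of $U$. Weak $(\eps,1)$-regularity is an extremely mild hypothesis --- it does not even make $G$ connected (a large complete bipartite graph plus a few isolated vertices satisfies it) and it gives no minimum-degree control --- so the only usable feature is the ``any two large sets span an edge'' property, which here enters through the disjointness of the neighbourhoods of the components of $G[A,N(A)]$. The secondary nuisance is the size bookkeeping: one must grow the skeleton from the \emph{smaller} side so that the Steiner-type set $W_0\subseteq V$ needed to reconnect $I_U$ stays within the $3\eps|V|$ budget on the $V$-side.
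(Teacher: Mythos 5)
Your proof is correct, and it lands on the same high-level structure as the paper's proof --- a small connected bipartite skeleton with nearly all remaining vertices hung off it as pendant leaves --- but it builds the skeleton by a genuinely different route. The paper picks $U'\subseteq U$ and $V'\subseteq V$ of sizes $\lfloor 3\eps|U|\rfloor$ and $\lfloor 3\eps|V|\rfloor$ at the outset, so that any component of $G[U',V']$ is automatically within budget on both sides, and then shows via a two-sided version of your ``split the components into two groups'' contradiction that some component of $G[U',V']$ meets $U'$ in at least $\eps|U|$ vertices or $V'$ in at least $\eps|V|$ vertices. You restrict only the $U$-side, take a component of $G[A,N(A)]$ meeting $A$ in at least $\eps|U|$ vertices, and then must shrink its (potentially huge) $V$-side by pruning a spanning tree down to one whose leaves all lie in $I_U$ and counting internal vertices to get $|W_0|\le|I_U|-1\le 3\eps|V|$. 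The paper thus avoids your pruning step entirely; in exchange, your one-sided contradiction --- two disjoint $\geq\eps|U|$-size subsets of $A$ whose component-neighbourhoods in $V$ are disjoint yet each of size $>(1-\eps)|V|$ --- is a bit tidier to set up than the simultaneous two-sided split of $U'$ and $V'$ the paper invokes. The kernel is the same in both: weak $(\eps,1)$-regularity forbids two large sets, one on each side, with no edges between them.
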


\begin{proof}
Let $U'$ be a subset of $U$ of size exactly $\floor{3\ep |U|}$ and let $V'$ be a subset of $V$ of size exactly $\floor{3\ep |V|}$.  Let $G'$ be the graph induced by $U', V'$ and suppose first that no component of $G'$ intersects $U'$ in at least $\ep |U|$ vertices or intersects $V'$ in at least $\ep |V|$ vertices.  However, now we may partition $U'$ into sets $U_1$ and $U_2$ and $V'$ into sets $V_1$ and $V_2$ such that $\ep|U|\leq |U_1|\leq 2\ep|U|$, $\ep|V|\leq |V_1|\leq 2\ep |V|$, and there are no edges from $U_1\cup V_1$ to $U_2\cup V_2$; however, this contradicts the fact that $G$ is weakly $(\ep, 1)$-regular.  So suppose that some component $H'$ of $G'$ intersects say $U'$ in at least $\ep |U|$ vertices.  Then since $G$ is weakly-$(\ep, 1)$-regular, we see that all but at most $\ep |V|$ vertices of $V'$ are also in $H'$ and thus there exists a connected subgraph $H'$ of $G$ which intersects each of $U$ and $V$ in at most $3\ep |U|$ and $3\ep |V|$ vertices respectively.  

Finally, using the fact that $G$ is weakly $(\ep, 1)$-regular, we see that all but at most $\ep |U|$ vertices of $U$ and $\ep |V|$ vertices of $V$ have a neighbor in $H'$, and thus $G$ contains a tree in which at least $(1-4\ep)|U|$ vertices of $U$ and at least $(1-4\ep)|V|$ vertices of $V$ are leaves. 

%
\end{proof}

Say that a graph $G$ on $n$ vertices has property $T(r, l, \lambda)$ if every $r$-coloring of $G$ either has a monochromatic tree with at least $(1-\lambda)n$ leaves or $G$ has a $(s,l,n')$-absorbing tree partition for some $2\leq s\leq r$ and $n'\geq (1-\lambda)n$.

\begin{lemma}\label{lem:linsizecommonleaves}
Let $r\geq 1$ and $0<\ep<\frac{1}{10r}$. If $p= \frac{\omega(1)}{n}$, then a.a.s. $\gnp$ has property $T(r,\frac{n}{8er!},6\ep)$.
\end{lemma}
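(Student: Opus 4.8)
The plan is to combine the sparse regularity lemma (Lemma \ref{lem:ssrlc}) with the large-minimum-degree partition result (Theorem \ref{thm:mindegpartition}) and the leafy-tree lemma (Lemma \ref{lem:leafytree}), following the two-step strategy outlined in the overview: push the coloring down to a reduced graph, solve the problem there, then lift each reduced-graph tree back to a nearly spanning tree in $\gnp$ by blowing up each cluster-edge into a regular pair and extracting a highly leafy subtree. First I would fix $\ep$ as in the statement, choose an auxiliary constant $\ep' \ll \ep$ (small enough to absorb the later error terms, in particular $\ep'<\frac{1}{2r}$ so Lemma \ref{lem:weakreginsomec} applies, and $\sqrt{2r\ep'}$ small compared to the minimum-degree slack available in Theorem \ref{thm:mindegpartition}), and let $m$ be a large integer to be specified. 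Apply Lemma \ref{lem:ssrlc} with parameters $\ep'$, $m$, $r$ to the $r$-coloring $\{G_1,\dots,G_r\}$ of $G\sim\gnp$, where $p_i=e(G_i)/\binom n2$, to obtain an equitable partition $\{V_1,\dots,V_k\}$ with $m\le k\le M$ that is simultaneously $(\ep',p_i)$-regular for every $i$.

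Next I would build the reduced graph $\Gamma$ on vertex set $\{V_1,\dots,V_k\}$: for each of the at least $(1-\ep')\binom k2$ pairs that are $(\ep',p_i)$-regular in every color, Lemma \ref{lem:weakreginsomec} gives a color $c$ in which the pair is weakly-$(\ep',1)$-regular, and we put in that cluster-edge with that color. Then $\Gamma$ is an $r$-colored graph on $k$ vertices with at least $(1-\ep')\binom k2$ edges, so by Lemma \ref{lem:nicereduced} it contains a subgraph $\Gamma'$ on $k'\ge (1-\sqrt{\ep'})k$ vertices with $\delta(\Gamma')\ge (1-2\sqrt{\ep'})k'$ (after adjusting constants; here we use $k\ge m$ large). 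Choosing $m$ (hence $k$) large enough and $\ep'$ small enough so that $2\sqrt{\ep'}<\frac{1}{er!}$ with a definite gap, Theorem \ref{thm:mindegpartition} applied to $\Gamma'$ (with its induced $r$-coloring, $\ep_{\mathrm{Thm}}:=2\sqrt{\ep'}$ and $\alpha:=\frac{1}{er!}-2\sqrt{\ep'}$) yields either a monochromatic spanning tree of $\Gamma'$ with at least $k'-\frac{2}{\alpha}\log k'$ leaves, or a $(s,l,k')$-absorbing tree partition in $\Gamma'$ with $2\le s\le r$ and $l\ge \alpha k'/2 \ge k'/(4er!)$ common leaf clusters. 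In either case we have a family of at most $r$ monochromatic trees in $\Gamma$ covering almost all clusters, pairwise intersecting exactly in a common leaf-cluster set $L_\Gamma$ of size at least (roughly) $k/(8er!)$.

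Finally I would lift this structure back to $G$. For each cluster-edge $\{V_i,V_j\}$ of color $c$ used by tree $T_t$, the pair $(V_i,V_j)$ is weakly-$(\ep',1)$-regular in color $c$, so by Lemma \ref{lem:leafytree} the color-$c$ bipartite graph between $V_i$ and $V_j$ contains a tree covering at least $(1-\ep')$ of each side with at least $(1-4\ep')$ of each side as leaves; moreover two regular pairs sharing a cluster can be glued through their non-leaf vertices, and since each cluster is used by a bounded number ($\le k'$) of cluster-edges but we only need to reserve $O(\ep' \cdot n/k)$ vertices per cluster as "connectors" and the rest remain leaves, the total number of vertices forfeited per cluster is $O(r\ep')|V_i|$. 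Doing this simultaneously for all trees $T_1,\dots,T_s$, reserving for each cluster in $L_\Gamma$ a leaf-slot in every tree, produces $s$ monochromatic trees $T_1',\dots,T_s'$ in $G$ with $|\bigcup V(T_t')|\ge (1-O(r\ep')-\sqrt{\ep'})n \ge (1-6\ep)n$ (after choosing $\ep'$ small) and common leaf set of size at least $|L_\Gamma|\cdot (1-O(r\ep'))\cdot \lfloor n/k\rfloor \ge \frac{n}{8er!}$; in the first alternative we instead get a single monochromatic tree with at least $(1-6\ep)n$ leaves. The one subtlety — and the main technical obstacle — is the bookkeeping for the gluing step: ensuring that a single cluster can serve as a leaf in all of $T_1',\dots,T_s'$ simultaneously while only a negligible fraction of its vertices are spent on connecting the various regular pairs and on the tree-internal (non-leaf) roles; this requires choosing, within each cluster used by tree $T_t$, disjoint small connector sets for the (at most $k'$) incident regular pairs and verifying via weak regularity that almost every remaining vertex of the cluster has a neighbor in the connector set of each incident pair, so that it is a common leaf. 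I expect the regularity parameters can be nested ($\ep' \ll \ep \ll 1/(er!)$, $m$ large) so that all the error terms sum to at most $6\ep n$ and the leaf count stays above $n/(8er!)$.
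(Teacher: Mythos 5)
Your proposal is correct and follows essentially the same route as the paper's proof: apply the sparse regularity lemma, clean up via Lemma \ref{lem:nicereduced}, color the reduced graph by weak regularity (Lemma \ref{lem:weakreginsomec}), invoke Theorem \ref{thm:mindegpartition} on the reduced graph, and then lift the resulting absorbing tree partition edge-by-edge using Lemma \ref{lem:leafytree}. The gluing bookkeeping you flag is also glossed over in the paper, which simply applies Lemma \ref{lem:leafytree} to each cluster-edge and tallies the surviving common leaves via the factor $(1-4s\eps)$, with $\eps'=\eps^2/4r$ chosen so the error terms sum below $6\eps$.
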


\begin{proof}
Let $G\sim \gnp$ be $r$-colored.  Let $m$ be large enough so that $\frac{\log{m}}{m}<\frac{\ep}{4r}(\frac{1}{er!}-\ep)$ and let $\ep':=\frac{\ep^2}{4r}$.  Apply Lemma \ref{lem:ssrlc} to $G$ with $\ep'$, $m$, and $r$.   Let $\Gamma$ be the $(\ep', p, 1/2r)$-reduced graph obtained.  

By Lemma \ref{lem:nicereduced}, we can pass to a subgraph $\Gamma'\subseteq \Gamma$ with $k':=|\Gamma'|\geq (1-\sqrt{r\ep'})k=(1-\ep/2)k$ and $\delta(\Gamma')\geq (1-2\sqrt{r\ep'})k\ge (1-\ep)k'$. We color the edges of $\Gamma'$ by an arbitrary color $c$ guaranteed by Lemma \ref{lem:weakreginsomec} and recall that this says the $c$-colored edges of $\Gamma'$ represent  $c$-colored weakly-$(\eps, 1)$-regular pairs in $G$.

Now apply Theorem \ref{thm:mindegpartition} with $\ep$ to $\Gamma'$ to get an $(s,l,k')$-absorbing tree partition $\cT$ with $l\geq k'-\frac{2}{\alpha}\log k'\geq  (1-\ep)k'$ (where the second inequality holds by the choice of $m$ and since $k\geq m$) 
if $s=1$ and $l\geq k'/(4er!)$ if $2\leq s\leq r$.  Since the edges of $\Gamma'$ represent weakly-$(\eps, 1)$-regular pairs, we can apply Lemma \ref{lem:leafytree} to each edge of $\cT$
to see that $G$ a.a.s.~has property $T(r,\frac{n}{8er!},6\eps)$; that is, to get the absorbing tree partition $\cT'$ of $G$.  Note that if there are $\tau k'$ leaves in $\cT$,
each of which is a leaf in $s$ different trees, we will get (by Lemma \ref{lem:leafytree}(ii)) at least $\tau k'\cdot (1-4s\ep)\frac{n}{k}\ge (1-5s\ep)\tau n$ leaves in the original graph which are leaves in all $s$ trees. So the total number of leaves in $\cT'$ is at least
\[(1-5s\ep)\tau n \geq 
\begin{cases}
(1-5s\ep)(1-\ep) n\geq (1-6\ep)n &\textrm{ if } s=1 \\
(1-5s\ep)\frac{1}{4er!} n\geq \frac{n}{8er!} &\textrm{ if } 2\leq s\leq r
\end{cases}\]
Also note that since $\cT$ covers the $k'$ vertices of $\Gamma'$, $\cT'$  will cover (by Lemma \ref{lem:leafytree}(i)) at least $k'\cdot(1-\ep)\frac{n}{k} \ge (1-\ep/2)k\cdot (1-\ep)\frac{n}{k}\geq (1-2\ep)n$ vertices of $G$ as desired.

\end{proof}

\section{Lemmas for Random Graphs}\label{ssc:randomlem}
The following lemma follows from standard applications of the Chernoff and union bounds.

\begin{lemma}\label{lem:rgs-common-nbrs}
Let $r\geq 1$.  If $p\ge \bfrac{9r\log n}{n}^{1/r}$, then a.a.s., every set $R$ of $r$ vertices in $G(n,p)$ satisfies $|N^\cap(R)| \ge np^r / 2$. Furthermore, if $p =\omega\of{ \bfrac{\log n}{n}^{1/r} }$, then for any $\eps > 0$, a.a.s.,~every set $R$ of $r$ vertices satisfies
\[(1-\eps)np^r \le |N^\cap(R)| \le (1+\eps)np^r.\]
\end{lemma}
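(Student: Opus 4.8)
The plan is to fix a set $R$ of $r$ vertices and compute the distribution of $|N^\cap(R)|$. For a vertex $v \notin R$, the probability that $v$ is adjacent to all $r$ vertices of $R$ is exactly $p^r$, and these events are independent across the (at least $n-r$) choices of $v$, so $|N^\cap(R)|$ stochastically dominates (and is dominated by) a binomial random variable with $n-r$ trials and success probability $p^r$; in particular $\E{|N^\cap(R)|} = (n-r)p^r = (1-o(1))np^r$. So everything reduces to a concentration estimate for this binomial together with a union bound over the $\binom{n}{r} \le n^r$ choices of $R$.

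First I would prove the first assertion. With $p \ge \bfrac{9r\log n}{n}^{1/r}$ we have $np^r \ge 9r\log n$, so $\E{|N^\cap(R)|} \ge (1-o(1)) \cdot 9r \log n \ge 8r\log n$ for $n$ large. Applying the Chernoff bound stated just before the lemma with $\alpha = 1/2$ (or any fixed constant slightly less than one, chosen so that $(1-\alpha)\E{|N^\cap(R)|} \ge np^r/2$), the probability that $|N^\cap(R)| < np^r/2$ is at most $\exp(-\alpha^2 \E{|N^\cap(R)|}/2) \le \exp(-(1/8)\cdot 8r\log n) = n^{-r}$. Taking the union bound over the at most $n^r$ sets $R$ gives failure probability $o(1)$ after a slightly more careful accounting (using that $8r\log n$ can be replaced by, say, $(2r+1)\log n$, or by sharpening the constant in $\E{}$); this yields the first claim a.a.s.

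For the second assertion, $p = \omega\bfrac{\log n}{n}^{1/r}$ gives $np^r = \omega(\log n)$, so $\E{|N^\cap(R)|} = (1-o(1))np^r = \omega(\log n)$ as well. Now I would apply the Chernoff bound in both directions (the stated lower-tail bound, and the standard upper-tail bound $\Pr{X \ge (1+\delta)\E X} \le \exp(-\delta^2 \E X/3)$ for $0<\delta<1$) with a fixed $\delta$ slightly smaller than $\eps$, absorbing the $(1-o(1))$ discrepancy between $\E{|N^\cap(R)|}$ and $np^r$ into the choice of constants. Since $\E{|N^\cap(R)|} = \omega(\log n)$, each tail probability is $n^{-\omega(1)}$, which beats the $n^r$ union-bound factor comfortably, so a.a.s.~$(1-\eps)np^r \le |N^\cap(R)| \le (1+\eps)np^r$ simultaneously over all $R$.

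There is no serious obstacle here — this is, as the paper says, a standard Chernoff-plus-union-bound argument. The only point requiring minor care is matching constants: in the first part one must check that the Chernoff exponent $\alpha^2 \E{|N^\cap(R)|}/2$ genuinely exceeds $r\log n$ with room to spare (the factor $9$ in the hypothesis is exactly what provides this slack), and that the deviation $(1-\alpha)\E{|N^\cap(R)|}$ still lands above the target $np^r/2$ despite the $(n-r)$-versus-$n$ gap; both are routine once one picks $\alpha$ appropriately.
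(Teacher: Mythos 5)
Your proof is correct and takes exactly the approach the paper intends: the paper itself gives no explicit proof, merely remarking that the lemma ``follows from standard applications of the Chernoff and union bounds,'' which is precisely the argument you spell out—identify $|N^\cap(R)|$ as $\mathrm{Bin}(n-r,p^r)$, apply Chernoff with a deviation parameter near $1/2$ (noting $np^r\geq 9r\log n$ gives exponent strictly larger than $r\log n$), and union-bound over the $\binom{n}{r}\le n^r$ choices of $R$, with the second assertion following similarly from $np^r=\omega(\log n)$.
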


Erd\H{o}s, Palmer, and Robinson \cite{EPR} determined the exact threshold for when the neighborhood of every vertex (of degree at least 2) in $\gnp$ induces a connected subgraph.  We need the following lemma which gives us a bound on the value of $p$ for which the common neighborhood of every set of $r$ vertices in $\gnp$ is non-empty and induces a connected graph.

\begin{lemma}\label{lem:rgs-local-conn}
Let $r\geq 1$.  If $p\ge \bfrac{C\log n}{n}^{1/(r+1)}$ with $C$ sufficiently large, then a.a.s.~in $G\sim G(n,p)$, $G[N^\cap(R)]$ is connected for every set $R$ of $r$ vertices.
\end{lemma}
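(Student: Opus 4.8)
The plan is to show that with high probability, for every set $R$ of $r$ vertices, the common neighborhood $W := N^\cap(R)$ not only is large (of size $\sim np^{r}$, which follows from Lemma \ref{lem:rgs-common-nbrs}) but also induces a connected subgraph. The standard way to do this is a two-pronged argument: first, $G[W]$ contains no isolated vertices (in fact every vertex of $W$ has many neighbors in $W$), and second, there is no ``balanced'' partition of $W$ into two parts with no edges between them. A union bound over all choices of $R$ (there are at most $n^{r}$ of them) and over all candidate partitions of $W$ will close the argument, provided the failure probabilities decay fast enough — this is where the exponent $1/(r+1)$ rather than $1/r$ becomes essential, since we need an extra factor of roughly $p$ worth of room in the edge probabilities to absorb the union bound over subsets of $W$.

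The key steps, in order, are as follows. First, fix $R$ and condition on $|W| = w$; by Lemma \ref{lem:rgs-common-nbrs} with $\eps = 1/2$ (using $p = \omega((\log n / n)^{1/r})$, which is implied since $p \ge (C\log n/n)^{1/(r+1)} \gg (\log n/n)^{1/r}$), we may assume $np^{r}/2 \le w \le 2np^{r}$ simultaneously for all $R$. Second, bound the probability that some vertex $v \in W$ has fewer than, say, $wp/2$ neighbors in $W \setminus \{v\}$: conditionally this is a binomial with mean $(w-1)p \approx np^{r+1} \ge C\log n / 2$ (using the lower bound on $p$), so by the Chernoff bound this probability is at most $\exp(-np^{r+1}/8) \le n^{-C/16}$, and a union bound over the $\le n^{r}\cdot n$ choices of $(R, v)$ kills it for $C$ large. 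Third — the main step — bound the probability of a disconnecting partition: if $G[W]$ is disconnected, there is a partition $W = A \cup B$ with $1 \le |A| \le w/2$ and no $A$–$B$ edges. For a fixed such partition with $|A| = a$, the probability of no edges between $A$ and $B$ is $(1-p)^{a(w-a)} \le \exp(-p \cdot a(w-a)/... )$; since $w - a \ge w/2 \ge np^{r}/4$, this is at most $\exp(-apnp^{r}/4) = \exp(-a \cdot np^{r+1}/4)$. Summing over all $a$ and all $\binom{w}{a} \le (en/a)^{a}$ choices of $A$, and then over the $\le n^{r}$ choices of $R$, the total failure probability is at most
\[
n^{r}\sum_{a\ge 1}\binom{w}{a}\exp\!\left(-\frac{a\,np^{r+1}}{4}\right) \le n^{r}\sum_{a\ge 1}\left(n\exp\!\left(-\frac{np^{r+1}}{4}\right)\right)^{a},
\]
and since $np^{r+1} \ge C\log n$, the bracketed quantity is at most $n^{1 - C/4} = o(1/n^{r+1})$ for $C > 4(r+2)$, so the geometric sum is $o(n^{-r})$ and the whole bound is $o(1)$.

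The step I expect to be the main obstacle is the third one: one has to be a little careful that the event ``$G[W]$ is disconnected'' is expressed in terms of edges lying entirely within $W$, which are independent of the edges from $R$ to $W$ that define $W$ in the first place — so the conditioning on $W$ (given $R$) does not bias the distribution of $G[W]$, which remains $G(w, p)$. Given that, the calculation above is routine, but getting the constant $C$ right (it must simultaneously satisfy the requirement from Lemma \ref{lem:rgs-common-nbrs}, the Chernoff bound for minimum degree, and the partition count) requires just keeping track of the worst constraint, which is the $C > 4(r+2)$ coming from the partition union bound. One minor subtlety: when $r = 1$ the statement says $G[N(v)]$ is connected for every vertex $v$ when $p \ge (C\log n/n)^{1/2}$, and the argument goes through verbatim with $w \sim np$.
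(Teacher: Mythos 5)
Your proof is correct and follows essentially the same route as the paper's: control $|N^\cap(R)|$ via Lemma \ref{lem:rgs-common-nbrs}, bound the probability that an induced subgraph on a set of that size is disconnected by summing $\binom{m}{k}(1-p)^{k(m-k)}$ over bipartitions, and union-bound over the $O(n^r)$ choices of $R$. Your Step 2 on minimum degree inside $W$ is redundant (subsumed by the $a=1$ term of the partition sum), and your explicit remark that edges inside $W=N^\cap(R)$ are independent of the $R$--$W$ edges defining $W$ is a point the paper leaves implicit; your bookkeeping also yields a smaller constant $C=O(r)$ than the paper's $(6(r+1)^2)^{r+1}$, but both satisfy ``$C$ sufficiently large.''
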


\begin{proof}
Let $0<\ep\leq 1/2$, let $3(r+1)^2<C'<\frac{C^{1/(r+1)}}{2}$, and suppose $(1-\eps)np^r \le m \le (1+\eps)np^r$. Then
\begin{equation}\label{lowp}
C'\frac{\log m}{m} \leq C'\frac{\log n}{(1-\eps)np^r} \le \frac{C'}{(1-\eps)}\bfrac{\log n}{n}^{1/(r+1)}< \bfrac{C\log n}{n}^{1/(r+1)} \leq p.
\end{equation}

Using \eqref{lowp}, the probability that the subgraph of $G$ induced by a set of size $m$ is disconnected is at most
\begin{align*}\sum_{k=1}^{m/2}\binom{m}{k}(1-p)^{k(m-k)} &\le \sum_{k=1}^{m/2}\exp\of{k\log \of{\frac{me}{k}} - k(m-k)\frac{C'\log m}{m}}\\
&\le \sum_{k=1}^{m/2}\exp\of{k\of{\log m - \frac{C'}{2}\log m}}=o\bfrac{1}{n^{r+1}}.
\end{align*}
There are $O(n^r)$ $r$-sets $R$ for which we must consider $N^\cap(R)$ (which by Lemma \ref{lem:rgs-common-nbrs} satisfy $(1-\eps)np^r \le |N^\cap(R)| \le (1+\eps)np^r$), and thus the expected number of $N(R)$ which induce a disconnected subgraph tends to 0.
\end{proof}

\begin{lemma}\label{lem:gnp-lose-nbrs}
Consider $G\sim \gnp $ with vertex set $V$. Then a.a.s.~for any set $L\subset V$ with $ \frac{80\log n}{p} \le |L| \le n$, all but at most $\frac{9\log n}{p}$ of $v\in V\setminus L$ satisfy  $|N(v, L)| \ge |L|p/2$.
\end{lemma}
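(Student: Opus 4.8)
The plan is to fix a set $L$ of each possible size and show, via a Chernoff-plus-union-bound argument, that the probability of having too many "bad" vertices outside $L$ is so small that it survives the union over all $L$ and all sizes. First I would fix an integer $\ell$ with $80\log n/p \le \ell \le n$ and a set $L$ with $|L|=\ell$. For a fixed $v\in V\setminus L$, $|N(v,L)|$ is distributed as $\mathrm{Bin}(\ell, p)$ with mean $\ell p$, so by the Chernoff bound (with $\alpha=1/2$), $\Pr{|N(v,L)| < \ell p/2} \le \exp(-\ell p/8) =: \rho$. Since $\ell p \ge 80\log n$, we have $\rho \le \exp(-10\log n) = n^{-10}$. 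Call $v$ \emph{bad} (for this $L$) if $|N(v,L)| < \ell p/2$; the events "$v$ is bad" over distinct $v\in V\setminus L$ are mutually independent since they depend on disjoint sets of potential edges (each edge from $v$ into $L$). Hence the number of bad vertices is stochastically dominated by $\mathrm{Bin}(n,\rho)$.

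Next I would bound the probability that there are at least $t := 9\log n/p$ bad vertices for this fixed $L$. By a union bound over the choice of which $t$ vertices are bad,
\[
\Pr{\text{at least } t \text{ bad vertices for } L} \le \binom{n}{t}\rho^{t} \le \of{\frac{en}{t}}^{t} n^{-10t} \le n^{t}\cdot n^{-10t} = n^{-9t}.
\]
Then I would union bound over all choices of $L$: for a fixed size $\ell$ there are $\binom{n}{\ell}\le 2^n$ sets, and $\ell$ ranges over at most $n$ values, so the total failure probability is at most
\[
n\cdot 2^{n}\cdot n^{-9t} = n\cdot 2^{n}\cdot n^{-81\log n/p}.
\]
Since $p = \omega(1)/n$ we have $81\log n/p = \omega(\log n)$... but we actually need $81(\log n/p)\cdot \log n \ge 2n + \log n$, i.e. $n^{-9t}$ must beat $2^n$. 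Here $9t\log n = 81(\log n)^2/p$, and since $p$ could be as small as $\omega(1)/n$ this quantity is $\omega(n(\log n)^2/\omega(1))$, which dominates $n\ln 2$; so the bound is $o(1)$. (For larger $p$ it is only easier.) Thus a.a.s.\ no set $L$ in the stated size range has $\ge 9\log n/p$ bad vertices, which is exactly the claim.

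The main obstacle is making sure the union bound over the exponentially many sets $L$ closes, which forces the error probability $\rho$ for a single bad vertex to be polynomially small — and this is precisely why the hypothesis requires $|L|\ge 80\log n/p$ rather than a weaker lower bound: one needs $\ell p$ of order at least (a large constant)$\times \log n$ so that $\rho \le n^{-10}$, giving room to absorb both the $\binom{n}{t}$ factor and the $2^n$ factor from choosing $L$. A minor point to handle carefully is the independence justification: the events depend on the edge-slots between $V\setminus L$ and $L$, and for distinct $v$ these slot-sets are disjoint, so independence (hence the Chernoff bound on the count of bad vertices, or equivalently the crude $\binom{n}{t}\rho^t$ estimate) is valid; one should state this explicitly. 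Everything else is a routine Chernoff/union-bound computation of the type already used for Lemmas \ref{lem:edgedist} and \ref{lem:rgs-common-nbrs}.
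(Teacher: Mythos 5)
Your overall plan — Chernoff for a single vertex, independence across vertices, union bound over bad $t$-subsets, union bound over $L$ and $|L|$ — is exactly the paper's plan, but one step throws away too much and the final bound does not close for most of the relevant range of $p$.

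The problem is where you replace $\rho = e^{-\ell p/8}$ by $n^{-10}$ before raising it to the $t$-th power. Keeping $\rho$ exact, the probability that a fixed $t$-set of vertices is entirely bad for a fixed $L$ of size $\ell$ is $\rho^t = e^{-\ell p t/8} = e^{-9\ell\log n/8}$ (since $t = 9\log n/p$); this is exponentially small \emph{in $\ell$}, which is exactly what is needed to beat $\binom{n}{\ell}\le n^\ell = e^{\ell\log n}$: one gets $\binom{n}{\ell}\rho^t \le e^{-\ell\log n/8}$, and after multiplying by $\binom{n}{t}\le e^{9(\log n)^2/p}$ and using $\ell\ge 80\log n/p$ the exponent is at most $-(\log n)^2/p$, which survives the sum over $\ell$. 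This is how the paper's proof closes the union bound. By contrast, once you coarsen to $\rho\le n^{-10}$, your bound $n^{-9t}=e^{-81(\log n)^2/p}$ is \emph{independent of $\ell$}, and you need it to beat $2^n$ uniformly. That requires $81(\log n)^2/p\gtrsim n$, i.e.\ $p=O\of{(\log n)^2/n}$; for larger $p$ — which is the regime where the lemma is actually invoked, $p\ge (C\log n/n)^{1/(r+1)}$ — the inequality fails. Your parenthetical ``for larger $p$ it is only easier'' is backwards: larger $p$ makes $t$ smaller and hence $n^{-9t}$ larger, so the union bound against $2^n$ gets strictly harder. The fix is simply not to coarsen $\rho$: keep $\rho^t=e^{-9\ell\log n/8}$ so that the exponential gain scales with $\ell$ and cancels $\binom{n}{\ell}$ term by term, which is what the hypothesis $\ell\ge 80\log n/p$ is really for (it ensures the leftover $-\ell\log n/8$ dominates the $\binom{n}{t}$ contribution $9(\log n)^2/p$), not to make $\rho$ merely polynomially small.
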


\begin{proof}
For a fixed set $L$, we have that $|N(v, L)|\sim Bin(|L|, p)$, so the Chernoff Bound implies that \[\Pr{|N(v, L)| < |L|p/2} \le e^{-\frac{|L|p}{8}}.\]
Call $v$ {\em bad for $L$ } if $|N(v,L)| < |L|p/2$.  
Since $N(v,L)$ and $N(u,L)$ are independent for different vertices $v$ and $u$, we have that the probability that there exists an $L$ with at least $9\log n / p$ many bad vertices is at most
\begin{align*}
\quad\sum_{\ell=80\log n/ p}^{n}\binom{n}{\ell}\binom{n}{9\log n / p}\cdot \exp\of{-\frac{\ell p}{8}\frac{9\log n}{p}}
&\le \sum_{\ell=80\log n/ p}^{n}\exp\of{-\frac{\ell \log n}{8}+\frac{9(\log n)^2}{p}} \\
&\le \exp\of{\log n -\frac{(\log n)^2}{p}}=o(1)
\end{align*}

\end{proof}

The following Lemma will be used to prove Theorem \ref{thm:gnp_threshold_low}.  We prove it here as it may be of independent interest.

\begin{lemma}\label{lem:0statement}~ Let $r\geq 1$.
\begin{enumerate}
\item If $p= \bfrac{r\log n-\omega(1)}{n}^{1/r}$ and $G\sim \gnp$, then a.a.s.~there exists a set $S\subseteq V(G)$ such that $|S|=r$, $S$ is independent, $S$ is not a dominating set, and for all $v\in V(G)$, $\deg(v, S)\leq r-1$.

\item For all $s>r$ with $s\log s=o(\log n)$ and $0 < c < \frac{1}{2\binom sr}$, if $p \le \bfrac{c\log n}{n}^{1/r}$ and $G\sim \gnp$, then a.a.s.~there exists a set $S\subseteq V(G)$ such that $|S|=s$, $S$ is independent, $S$ is not a dominating set, and for all $v\in V(G)$, $\deg(v, S)\leq r-1$.
\end{enumerate}
\end{lemma}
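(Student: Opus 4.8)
The plan is to prove both parts by a first-moment / union-bound argument: for an appropriately chosen candidate set $S$ of the required size, the probability that it \emph{fails} to be the desired configuration is small, and either there are enough candidate sets that one succeeds in expectation, or we can directly localize the search. First I would unpack what the three conditions on $S$ require. If $|S|=s$ and $S$ is independent (no edges inside $S$), then ``$\deg(v,S)\le r-1$ for all $v\in V(G)\setminus S$'' is exactly the event that no vertex $v$ outside $S$ has $r$ or more neighbors in $S$. For a fixed $v$ and a fixed $s$-set $S$, the number of neighbors of $v$ in $S$ is $\mathrm{Bin}(s,p)$, so $\Pr{\deg(v,S)\ge r} \le \binom{s}{r} p^r$ (and for part (i), where $s=r$, this is just $\Pr{\deg(v,S)=r}=p^r$). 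There are about $n$ choices of $v$, so the ``bad for $S$'' probability that \emph{some} outside vertex sees $\ge r$ vertices of $S$ is at most roughly $n\binom{s}{r}p^r$. Feeding in $p^r = \frac{r\log n - \omega(1)}{n}$ in part (i) gives $n p^r = r\log n - \omega(1) = \log(n^r) - \omega(1)$; feeding in $p^r \le \frac{c\log n}{n}$ with $c<\frac{1}{2\binom sr}$ in part (ii) gives $n\binom sr p^r \le c\binom sr \log n \le \tfrac12\log n$. I would also need ``$S$ is independent'', which costs a factor $(1-p)^{\binom s2} = 1 - o(1)$ since $p\to 0$ and $s$ is small, and ``$S$ is not dominating'', i.e.\ some vertex has no neighbor in $S$, which is a high-probability event (a positive fraction, indeed almost all, of outside vertices miss $S$ entirely when $sp\to 0$ and $np^r$ is only logarithmic) — more carefully, I would argue that conditioned on $S$ being good, a dominating $S$ is extremely unlikely.

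For part (i) I would use the standard second-moment-free trick for the $0$-statement side of a threshold: let $X$ be the number of $r$-sets $S\subseteq V(G)$ that are independent, non-dominating, and have $\max_{v\notin S}\deg(v,S)\le r-1$. Then $\E{X} \ge \binom nr (1-p)^{\binom r2}\bigl(1 - np^r\bigr)^{?}$ — more precisely, using inclusion–exclusion or just independence across the $n-r$ outside vertices (the events $\deg(v,S)\le r-1$ for distinct $v$ are independent given the edges inside $S$ are absent, since they involve disjoint edge sets), $\E{X} \approx \binom nr (1-p)^{\binom r2}\prod_{v}\bigl(1 - \binom{r}{r}p^r\bigr) \approx \frac{n^r}{r!}(1-np^r+\cdots) \approx \frac{n^r}{r!}e^{-np^r}$, and since $np^r = r\log n - \omega(1)$ we get $e^{-np^r} = e^{\omega(1)}/n^r$, so $\E{X}\to\infty$. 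To upgrade ``$\E{X}\to\infty$'' to ``a.a.s.\ $X>0$'' I would compute $\E{X^2}$ and show $\E{X^2} = (1+o(1))\E{X}^2$; the dominant contribution comes from pairs of $r$-sets that are disjoint (or share few vertices), and the dependence introduced by shared vertices or by an outside vertex being tested against both sets is negligible because $r$ is constant and $p^r$ is only of order $\log n / n$. Alternatively — and this is probably cleaner — I would note that it suffices to exhibit \emph{one} such $S$, so I could fix an arbitrary $r$-set $S_0$, reveal only the edges incident to $S_0$, and argue directly: the probability $S_0$ is independent and $\max_{v}\deg(v,S_0)\le r-1$ is $(1-p)^{\binom r2}\bigl(1-p^r\bigr)^{n-r} \sim e^{-np^r} = \omega(1)/n^r \to 0$, which is \emph{not} enough for a single set — so the second moment over all $\binom nr$ sets is genuinely needed in part (i).

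For part (ii), the bound $n\binom sr p^r \le \tfrac12\log n$ is much stronger, so a single candidate set already nearly works: fixing one $s$-set $S_0$, the probability it is \emph{not} good (some outside vertex has $\ge r$ neighbors in it) is at most $n\binom sr p^r \le \tfrac12\log n$, which still isn't $o(1)$, so again I would take $X$ = number of good, independent, non-dominating $s$-sets and run a first/second moment argument; here $\E{X}\ge \binom ns (1-p)^{\binom s2}\exp(-(1+o(1))n\binom sr p^r) \ge \binom ns \cdot n^{-1/2 + o(1)}$, using $s\log s = o(\log n)$ to ensure $\binom ns \gg n^{1/2}$, so $\E{X}\to\infty$; the second moment concentrates by the same disjointness reasoning as before (shared vertices contribute lower-order terms, controllable because $s\log s=o(\log n)$ keeps the combinatorial overcounting subexponential). \textbf{The main obstacle} I anticipate is the second-moment computation for part (ii): unlike part (i), $s$ is allowed to grow with $n$ (subject only to $s\log s=o(\log n)$), so I must be careful that the sum over pairs of $s$-sets sharing $j$ vertices — which carries a factor like $\binom ns\binom sj\binom{n-s}{s-j}$ times a correction for the shared structure — is dominated by the $j=0$ term uniformly in $s$; the correction factor for a shared outside vertex being counted against both sets is at most $\exp(O(n\binom sr p^r)) = \exp(O(\log n))$ per such vertex, and I would need the combinatorial factors to beat this, which is where $c < \frac{1}{2\binom sr}$ (rather than just $c<\frac1{\binom sr}$) gives the slack. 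Everything else — independence of $S$, non-domination, the arithmetic with $p^r$ — is routine.
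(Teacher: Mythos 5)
Your proposal is correct and takes essentially the same approach as the paper: a second-moment argument applied to $X$, the number of independent $s$-sets with no outside vertex having $r$ or more neighbors inside, with $\mathbb{E}[X]\to\infty$ coming from the same $e^{-n\binom{s}{r}p^r}$ estimate and the overlap terms in $\mathbb{E}[X^2]$ tamed by $s\log s=o(\log n)$ together with $2\binom{s}{r}c<1$. The only thing you leave as a sketch --- the joint probability that two overlapping sets are both good, which the paper handles for part (ii) by the crude bound that this probability is at most $1$ (the combinatorial deficit $O(n^{-k})$ then dominates), and for part (i) by the exact computation $\bigl(p^k(1-p^{r-k})^2+(1-p^k)\bigr)^{n-2r}$ for $r$-sets sharing $k$ vertices --- is precisely where the paper's work lies, but you correctly identify it as the crux and why the stated hypotheses provide the needed slack.
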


\begin{proof}
We begin with the proof of (ii).  Choose $s$, $c$, and $p$ as in the statement. 
We first show that we can find an independent set $S$ of $s$ vertices such that no $r$ vertices in $S$ have a common neighbor (or equivalently, no $v\in V\setminus S$ has more than $r-1$ neighbors in $S$).  For $S\in \binom{[n]}{s}$ let $X_S$ be the indicator random variable for the event 
\[A_S := \set{\textrm{$S$ is independent and no $r$ vertices in $S$ have a common neighbor}}\]
and let 
\[X = \sum_{S\in\binom{[n]}{s}}X_S.\] 
For a fixed set $S$, let $q$ represent the probability that a vertex $v\not\in S$ has at least $r$ neighbors in $S$. 
Then
\[q := \sum_{k=r}^{s}\binom{s}{k}p^k(1-p)^{s-k}=  \binom{s}{r}p^r\of{ (1-p)^{s-r} + O\of{\sum_{t=1}^{s-r}(sp)^t}} = 
  \binom{s}{r}p^r(1+O(sp)).\]
To see the last two equalities, note that the conditions on $s$ imply that  $sp = o(1)$ and for all $t> 0,$ 
 \[\frac{\binom{s}{r+t}p^{r+t}}{\binom{s}{r}p^r} \le (sp)^{t}.\]

Now we have that
\begin{align*}
\Pr{X_S=1} &= \of{1 - q}^{n-s}(1-p)^{\binom s2}
\end{align*} 
and
\[\E{X} = \binom{n}{s}(1-q)^{n-s}(1-p)^{\binom s2} = \exp\of{s\log \bfrac{ne}{s} - \binom{s}{r}c\log n\opoo + O(1)}\to \infty\]
since $\binom{s}{r}c < 1<s$ and $s\log s = o(\log n)$.

An application of the second moment method will show that such a set exists a.a.s. It suffices to show that $\E{X^2}/\E{X}^2 \le 1+o(1)$ (see e.g.~Corollary 4.3.2 in \cite{AS}). 

\begin{align}
\E{X^2} &= \E{\of{\sum_{S\in\binom{[n]}{s}}X_S}^2}\notag\\
&=\E{X} + \sum_{k=0}^{s-1}\,\sum_{|S_1\cap S_2| = k}\Pr{A_{S_1}\wedge A_{S_2}} \label{eq:sumover}\\
&\le \E{X} +\binom{n}{s}^2(1-q)^{2n - 4s}(1-p)^{2\binom s2}+ \sum_{k=1}^{s-1}O\of{n^{2s-k}}\notag.
\end{align}
The second sum in \eqref{eq:sumover} is over ordered pairs of sets. 
Since $\E{X}^2 = \binom{n}{s}^2(1-q)^{2n-2s}(1-p)^{2\binom s2} = \Omega\of{\bfrac{n}{s}^{2s} e^{-2\binom{s}{r}c\log n} }$, we have \[\E{X^2} / \E{X}^2 \le \frac{1}{\E{X}} + (1-q)^{-2s} + O\of{\frac{s^{2s+1}e^{2\binom{s}{r}c\log n}}{n}} \le  1+o(1)\] since $s\log s = o(\log n)$ and $2\binom{s}{r}c <1$.

Now note that a.a.s.,  $|N^\cup(S)| = O(snp) = o(n)$ and so $S$ is not a dominating set and (ii) is proved. 

The proof of (i) is similar, but we are more careful in the calculation of $\E{X^2}$.  Set $\omega:=\omega(1)$ and suppose $p= \bfrac{r\log n - \omega}{n}^{1/r}$.  We want to prove the existence of a set $S$ of size $r$. 
In this case we simply have $q=p^r$, and thus 
\begin{align*}
\Pr{X_S=1} &= \of{1 - p^r}^{n-r}(1-p)^{\binom r2}
\end{align*} 
and
\begin{align*}
\E{X} &= \binom{n}{r}(1-p^r)^{n-r}(1-p)^{\binom r2}\\ 
&= \exp\of{r\log n - n\cdot \frac{r\log n -\omega}{n} + O(1)} = \exp\of{\omega\opoo}\to \infty.
\end{align*}
For $r$-sets $R_1$, $R_2$ with $|R_1\cap R_2|=k$, we have 
\begin{align*}
\Pr{A_{R_1}\wedge A_{R_2}}&\leq \of{p^k(1-p^{r-k})^2 + (1-p^k)}^{n-2r}
(1-p)^{\binom{r}{2}+k(r-k)+\binom{r-k}{2}}\\
&\leq \of{p^k(1-p^{r-k})^2 + (1-p^k)}^{n-2r}
\end{align*}
Thus
\begin{align*}
\E{X^2} &= \E{X} + \sum_{k=0}^{r-1}\,\sum_{|R_1\cap R_2| = k}\Pr{A_{R_1}\wedge A_{R_2}}
\\
&\le \E{X} + \sum_{k=0}^{r-1}\binom{n}{r}\binom{r}{k}\binom{n-r}{r-k} \of{p^k(1-p^{r-k})^2 + (1-p^k)}^{n-2r}
\\
&\le \E{X} +\binom{n}{r}^2(1-p^r)^{2n - 4r}
+\sum_{k=1}^{r-1}\exp\of{2\omega - k\log n +O(1)}.
\end{align*}
Since $\E{X}^2 =\binom{n}{r}^2(1-p^r)^{2n-2r}(1-p)^{2\binom r2} = \exp\of{2\omega\opoo},$
we again have that \[\E{X^2}/\E{X}^2 \le 1+o(1).\]
\end{proof}

\section{Monochromatic trees in random graphs}
\label{sec:mainthmpf}

\subsection{Upper bounds on the tree cover/partition number}

\begin{theorem}
For all $r\geq 2$, there exists $C\geq r$ such that a.a.s.
\begin{enumerate}
\item  ~if~  $p\ge \bfrac{27\log n}{n}^{1/3}$ ~then~ $\tp_2(G(n,p)) \le 2$, and
\item  ~if~ $p\geq \bfrac{C\log n}{n}^{1/{(r+1)}}$, ~then~ $\tc_r(\gnp )\leq r^2$, and
\item ~if~ $p\ge \bfrac{C\log n}{n}^{1/r}$, then there is a collection of $r$ vertex disjoint monochromatic trees which cover all but at most $9r\log n / p$ vertices.
\end{enumerate}
\end{theorem}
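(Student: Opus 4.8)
The plan is to combine the sparse-regularity / high-minimum-degree input of Lemma~\ref{lem:linsizecommonleaves} with a multiple-exposure absorption argument. Split $\gnp$ into two independent exposures $G_0\cup G_1\sim\gnp$ with $1-p=(1-p_0)(1-p_1)$ and $p_0=p_1=1-\sqrt{1-p}=\Theta(p)$. Apply Lemma~\ref{lem:linsizecommonleaves} (with a fixed $\ep<\frac{1}{10r}$) to the given $r$-colouring restricted to $G_0$: a.a.s.\ we obtain either a monochromatic tree with at least $(1-6\ep)n$ leaves, or an $(s,l,n')$-absorbing tree partition with $2\le s\le r$, $l\ge\frac{n}{8er!}$ and $n'\ge(1-6\ep)n$. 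In either case we begin from $s\le r$ monochromatic trees of distinct colours possessing a common leaf set $L$ of linear size $|L|\ge\beta n$ (with $\beta=\beta(r)>0$) covering all but a set $W$ of at most $6\ep n$ vertices; recall that in such a configuration $V(T_i)\cap V(T_j)=L$ for $i\ne j$ and $L$ consists of leaves of every $T_i$.

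Next I would absorb $W$ using only $G_1$, conditioning once on the a.a.s.\ event of Lemma~\ref{lem:gnp-lose-nbrs} for $G_1$ --- this is a ``for every set $L$'' statement, so it remains applicable as $L$ is updated. Call a colour $c$ \emph{heavy} for an uncovered vertex $x$ if $x$ has at least $|L|p_1/(2r)$ neighbours of colour $c$ in $L$ (within $G_1$); by pigeonhole every $x$ with at least $|L|p_1/2$ neighbours in $L$ has a heavy colour. While some $x\in W$ has a heavy colour $c$ that is not yet a tree colour, open a new tree $T_c$ with centre $x$ and leaf set $N_c(x)\cap L$, permanently reassign the discarded reservoir vertices $L\setminus N_c(x)$ to a single old tree (in which they remain leaves, so no tree is disconnected), replace $L$ by $N_c(x)\cap L$, delete $x$ from $W$, and repeat. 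At most $r-s\le r-1$ new colours can ever be opened, so throughout the process $|L|\ge\beta n(p_1/2r)^{r-1}=\Omega(np^{r-1})$ (treating $r$ as a constant); for $C$ large enough this keeps $|L|p_1\ge 80\log n$, so Lemma~\ref{lem:gnp-lose-nbrs} applies to every intermediate reservoir.

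When no more colours can be opened, every $x\in W$ with at least $|L|p_1/2$ neighbours in $L$ has a heavy colour, and by the stopping rule all its heavy colours are tree colours. Randomly partition the final reservoir $L$ among the at most $r$ tree colours into parts of size $\ge|L|/(2r)$, and permanently assign the part $L^{(c)}$ to $T_c$; since each vertex of $L$ was a leaf of every tree, this makes the trees pairwise vertex-disjoint and leaves each a (monochromatic) tree. Since $|L|p_1=\Omega(np^r)\gg\log n$ for $C$ large, a Chernoff bound and a union bound over the at most $n$ vertices of $W$ show that a.a.s.\ every such $x$ with heavy tree colour $c$ still has a colour-$c$ neighbour in $L^{(c)}$, so $x$ may be attached to $T_c$ as a new pendant. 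The only vertices of $W$ not absorbed are those with fewer than $|L|p_1/2$ neighbours in the final reservoir, and Lemma~\ref{lem:gnp-lose-nbrs} caps their number at $9\log n/p_1\le 9r\log n/p$. Thus the at most $r$ resulting vertex-disjoint monochromatic trees cover all but at most $9r\log n/p$ vertices.

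The main obstacle --- and the reason Lemma~\ref{lem:linsizecommonleaves} (sparse regularity together with Theorem~\ref{thm:mindegpartition}) is essential rather than a purely greedy construction inside $\gnp$ --- is keeping the common reservoir above $C'\log n/p_1$ for a suitable $C'=C'(r)$ after all $r-1$ shrinkings, since both Lemma~\ref{lem:gnp-lose-nbrs} and the final partition concentration require this. Each opened colour costs a factor $\Theta(p)$, so starting instead from the $\Theta(pn)$ leaves of a single star would leave only a reservoir of size $\Theta(np^r)$, forcing $|L|p_1\sim np^{r+1}\gg\log n$, i.e.\ $p\gtrsim(\log n/n)^{1/(r+1)}$, which is the weaker exponent already obtained in part~(ii). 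Beginning from the $\Theta(n)$-leaf tree produced by Lemma~\ref{lem:linsizecommonleaves} recovers the extra factor of $p$ and reduces the requirement to $np^r\ge C\log n$. The remaining work is routine: verifying the reservoir-size bound survives all shrinkings, handling the two cases of Lemma~\ref{lem:linsizecommonleaves} uniformly, and the disjointness bookkeeping when reassigning discarded reservoir vertices --- only leaves are ever moved between trees, so no tree is ever disconnected.
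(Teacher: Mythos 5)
Your proposal addresses only part (iii); parts (i) and (ii) are not treated (in the paper, (ii) is immediate from Proposition~\ref{prop:comneigh} and Lemma~\ref{lem:rgs-common-nbrs}, and (i) combines Lemmas~\ref{lem:rgs-common-nbrs}, \ref{lem:rgs-local-conn} and \ref{lem:Ypartition}).

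For part (iii) your argument is correct and follows the same strategy as the paper: seed the process with the nearly-spanning absorbing tree partition of Lemma~\ref{lem:linsizecommonleaves}, repeatedly shrink the common leaf reservoir by one factor of $p$ to open each new colour, invoke Lemma~\ref{lem:gnp-lose-nbrs} to bound the discarded vertices, and finally split the reservoir (you via Chernoff plus a union bound, the paper via the equivalent Lemma~\ref{lem:Ypartition}). The one genuine structural difference is that the paper uses $r{+}1$ exposures $G_0,\dots,G_r$, applying Lemma~\ref{lem:gnp-lose-nbrs} to a fresh $G_{i+1}$ at each shrinking step, whereas you use only two exposures $G_0,G_1$ and reuse the a.a.s.\ conclusion of Lemma~\ref{lem:gnp-lose-nbrs} for $G_1$ across all steps. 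You are right that the lemma is a ``for every $L$'' statement, so once one conditions on its good event the adaptive choice of each $L_i$ (even when $L_i$ is defined in terms of $G_1$'s own edges) is harmless; this is a legitimate, mildly cleaner variant. The quantitative bookkeeping in your version checks out: $p_1=1-\sqrt{1-p}\ge p/2$, so the lost set is at most $9\log n/p_1\le 18\log n/p\le 9r\log n/p$ for $r\ge 2$, and one needs $C$ large enough that the final reservoir satisfies $|L|p_1\ge C'(r)\log n$ for both Lemma~\ref{lem:gnp-lose-nbrs} and the terminal concentration step — which you correctly flag. So the approach buys a small simplification (two exposures rather than $r{+}1$) at no cost, but the underlying mechanism — sparse regularity to get a $\Theta(n)$-size common leaf set, then $r{-}1$ multiplicative-$p$ shrinkings to reach exponent $1/r$ rather than $1/(r{+}1)$ — is identical to the paper's.
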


\begin{proof}
Part (ii) follows directly from Proposition \ref{prop:comneigh} and Lemma \ref{lem:rgs-common-nbrs}. 

For part (i), suppose the edges have been colored with colors $1$ and $2$ and consider two vertices $u$ and $v$ with no monochromatic path between them; if there were no such pair, we would have a spanning monochromatic component by the remark of Erd\H{o}s and Rado.  By Lemma \ref{lem:rgs-common-nbrs}, $|N^\cap(\set{u,v})| \ge np^2/2.$ Let $N_{x,c}$ with $x\in \set{u,v}$ and $c\in\set{1,2}$ represent the color $c$ neighbors of vertex $x$ in $N^\cap(\set{u,v})$.  Then we must have that $N_{u,1}\cap N_{v,1} =N_{u,2}\cap N_{v,2} =\emptyset$. Thus $N_{u,1} = N_{v,2}=:A$ and $N_{u,2} = N_{v,1}=:B$. Now by Lemma \ref{lem:rgs-local-conn}, $N^\cap(\set{u,v})$ induces a connected subgraph. If both $A$ and $B$ are non-empty then there is an edge between them, but this edge would give a monochromatic path between $u$ and $v$. Thus wlog, $N^\cap(\set{u,v}) = A$. By Lemma \ref{lem:rgs-common-nbrs}, every vertex in $Z:=V- N^\cap(\set{u,v}) - \set{u,v}$ has at least $np^3/2 \ge 27\log n/2$ many neighbors in $N^\cap(\set{u,v})$. So applying Lemma \ref{lem:Ypartition}, with $k=2$, $Y=N^\cap(\set{u,v})$, and $Z$ as above, we have obtained the desired partition into two monochromatic trees.

In order to prove part (iii),
we will use the method of multiple exposures. Let $p\ge \bfrac{C\log n}{n}^{1/r}$ with $C > 2000e r!r(4r^2)^r$  and let $\wh{p}$ be such that $(1-\wh{p})^{r+1} = (1-p)$. Note that in this case, $p\ge \wh{p} \ge \frac{p}{r+1}.$ 
Then we may view $\gnp$ as $G_0\cup\cdots\cup G_{r}$ where each $G_i \sim G(n,\wh{p})$ for $0\le i\le r$. We will expose the $G_i$ one at a time along with the colors assigned to their edges. Note that if an edge belongs to more than one $G_i$, then when it is revealed a second time, we already know its color. This does not affect our argument.

Let $\alpha = \frac{1}{8er!}$. First we expose $G_0$ and apply Lemma \ref{lem:linsizecommonleaves} with $\epsilon$ as any small constant.  This provides us with an  $(s, \alpha n, n')$-absorbing tree partition $\cT$ and common leaf set $L_0$, such that $1\le s\le r$ and $n'\ge (1-6\ep)n$. If $n'=n$, then we are done. Our goal is now to ``attach" as many of the vertices of $V_0:=V(G)\setminus V(\cT)$ to $L_0$ as possible. Expose $G_1$ (and all the colors assigned to its edges). Apply Lemma \ref{lem:gnp-lose-nbrs} to $G_{1}$ with $L_0$ as $L$. This is possible since $|L_0|\wh{p} \ge \alpha n \wh{p}\ge 80 \log n$. 
Let $V_0' = \set{v\in V_0\,:\, |N(v,L_0)| \ge |L_0|\wh{p}/2}$ and let $V_0'' = V_0\setminus V_0'$. Then by the lemma, $|V_0''| \le 9\log n / p.$
Now if every vertex in $V_0'$ has at least $r\log n$ neighbors in $L_0$ with colors from $[s]$ (note that if $s=r$, then this must be the case), then we may apply Lemma \ref{lem:Ypartition} with $L_0$ as $Y$ and $V_0'$ as $Z$ to get a partition $\set{Y_1, \ldots, Y_s}$ of $L_0$ such that for all $v\in V_0'$ there exists $\ell\in[s]$ such that $N_\ell(v)\cap Y_\ell\neq \emptyset$. By arbitrarily choosing such a $Y_\ell$ for each $v\in V_0'$, we have the desired tree partition of $V\setminus V_0''  $. Otherwise there is a vertex $x_1$ in $V_0'$ satisfying
\[N_j(x_1, L_0)\ge(\frac{|L_0|\wh{p}}{2} - r\log n)/(r-s) \ge \frac{\alpha n \wh{p}}{2r} \]
for some $j\in [r]\setminus [s]$. Without loss of generality, $j=s+1$. Set $L_1:= N_{s+1}(x_1, L_0)$ and $V_1 = V_0'-\set{x_1}$. 
 
Now suppose that for some $1\le i\le r-s$, we have found vertices $\set{x_1,\ldots, x_i} \subset V_0$ and sets $L_{j}$ for $1\le j\le i$ such that $|L_i| \ge \alpha n\bfrac{\wh{p}}{2r}^i$, $L_{i}\subseteq L_{i-1}$ and  $L_{i} \subset N_j(x_j)$ for $1\le j \le i$. Expose $G_{i+1}$ and apply Lemma \ref{lem:gnp-lose-nbrs} to $G_{i+1}$ with $L_i$ as $L$. We may apply the lemma since
\[|L_i|\wh{p} \ge \alpha n \bfrac{\wh{p}}{2r}^{i}\wh{p} \ge \alpha n \bfrac{p}{2r(r+1)}^r\ge  80r\log n.\]  
Let $V_i' = \set{v\in V_i\,:\, |N(v, L_i)| \ge|L_i|\wh{p}/2}$  and let $V_i'' = V_i\setminus V_i'$.  
If every vertex in $V_i'$ has at least $r\log n$ neighbors in $L_i$ with colors from $[s+i]$ (if $s+i = r$, then this is the case by the calculation above), then we may apply Lemma \ref{lem:Ypartition} and we are done as in the base case. 
Otherwise, there is a vertex $x_{i+1} \in V_i'$ satisfying \[N_j(x_{i+1},L_i)\ge (\frac{|L_i|\wh{p}}{2} - r\log n)/(r-(s+i))\ge \alpha n \bfrac{\wh{p}}{2r}^{i+1}\]for some $j\in [r]\setminus[s+i]$. Without loss of generality, $j=s+i+1$. Set $L_{i+1}:= N_{s+i+1}(x_{i+1}, L_i)$ and $V_{i+1} = V_i'-\set{x_{i+1}}$. Thus we will find the desired partition after at most $r-s$ iterations of the above procedure. At each stage we lose at most $9\log n / p$ many vertices and thus we lose at most $9r\log n / p$ in total.
 
\end{proof}

\subsection{Lower bounds on the tree cover number}

\begin{theorem}
For all $r\geq 2$,
\begin{enumerate}
\item  ~if~ $p=\bfrac{r\log n-\omega(1)}{n}^{1/r}$, ~then~ $\tc_r(\gnp )> r$, and
\item ~if~ $p = o\of{ \bfrac{r\log n}{n}^{1/r}}$, ~then~ $\tc_r(\gnp )\to \infty$. 
\end{enumerate}
\end{theorem}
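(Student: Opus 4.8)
The plan is to derive both parts almost immediately from Lemma~\ref{lem:0statement} together with Observation~\ref{obs:need_s_trees}; since the substantive content (a second-moment computation) is already packaged in Lemma~\ref{lem:0statement}, what remains is essentially bookkeeping.

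For part (i), I would apply Lemma~\ref{lem:0statement}(i): when $p=\bfrac{r\log n-\omega(1)}{n}^{1/r}$, a.a.s.\ there is a set $S\subseteq V(\gnp)$ with $|S|=r$ that is independent, is not a dominating set, and satisfies $\deg(v,S)\le r-1$ for every vertex $v$. On this a.a.s.\ event I would invoke Observation~\ref{obs:need_s_trees} with $S$ in the role of $X$ and with $s=r$, so that the hypothesis $1\le r\le s$ holds with equality; this gives $\tc_r(\gnp)>r$, and hence $\tc_r(\gnp)>r$ a.a.s.

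For part (ii), I would first recall that ``$\tc_r(\gnp)\to\infty$ a.a.s.'' means precisely that for every fixed integer $M$ one has $\tc_r(\gnp)>M$ a.a.s. So I would fix $M$, assume without loss of generality that $M>r$, set $s=M$ and $c=\tfrac{1}{4\binom{M}{r}}$ (so $0<c<\tfrac{1}{2\binom Mr}$ and, $s$ being fixed, $s\log s=o(\log n)$), and observe that $p=o\of{\bfrac{r\log n}{n}^{1/r}}$ forces $p^r=o\of{\tfrac{\log n}{n}}$, hence $p\le\bfrac{c\log n}{n}^{1/r}$ for all large $n$. Then Lemma~\ref{lem:0statement}(ii) applies and yields, a.a.s., an independent, non-dominating set $S$ of size $M$ with $\deg(v,S)\le r-1$ for all $v$, and Observation~\ref{obs:need_s_trees} (with $X=S$) gives $\tc_r(\gnp)>M$ a.a.s. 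Since $M$ was arbitrary, this is the desired conclusion.

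I do not expect any genuine obstacle here: the only points needing care are that $s=r$ is an admissible choice in Observation~\ref{obs:need_s_trees} for part (i), and that in part (ii) one reads ``$\to\infty$'' as a statement quantified over fixed thresholds $M$ and checks the hypotheses of Lemma~\ref{lem:0statement}(ii) for each such $M$. If desired, part (ii) could be sharpened by instead letting $M=M(n)\to\infty$ slowly enough that $M\log M=o(\log n)$ and the first term in the expectation estimate inside Lemma~\ref{lem:0statement}(ii) still dominates the second, but the fixed-$M$ argument already suffices for the statement as given.
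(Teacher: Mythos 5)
Your proposal is correct and takes essentially the same route as the paper: both parts are deduced from Lemma~\ref{lem:0statement} together with Observation~\ref{obs:need_s_trees}, with $s=r$ in part (i). In part (ii) you fix an arbitrary $M$ and apply the lemma with $s=M$ and a corresponding fixed $c$, whereas the paper chooses a single slowly growing $s(n)\to\infty$; these are equivalent ways of establishing $\tc_r(\gnp)\to\infty$ in probability, and you correctly note this alternative yourself.
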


\begin{proof}
(i) We apply Lemma \ref{lem:0statement}~(i) to get an independent set of size $r$ which is not dominating and with no common neighbor.  But then Observation~\ref{obs:need_s_trees} applied with $s=r$ finishes the proof. 

(ii) Choose $s$ as a function of $n$ so that $s\to \infty$, but $s\log s=o(\log n)$.  Similarly to the previous part, the proof follows by applying Lemma~\ref{lem:0statement}~(ii) and Observation~\ref{obs:need_s_trees}.
\end{proof}


\subsection{Large monochromatic components}\label{ssec:lmc}

Finally, we prove that for all $r\geq 2$ and $0<\ep\ll 1/r$, there exists $C>0$ such that if $p\geq \frac{C}{n}$, then a.a.s.~$\tm_r(\gnp )\geq (1-\ep)\frac{n}{r-1}$.

\begin{proof}[Proof of Theorem \ref{thm:randomlargemono}]
Let $r\geq 2$, $0<\ep <\min\{1/2r,1/9\}$, $m\geq 1/\ep^2$, and let $M$ be given by Lemma \ref{lem:ssrlc}.   Choose $C$ sufficiently large for an application of Lemma \ref{lem:edgedist}, let $p\geq \frac{C}{n}$, and let $G_1,\dots, G_r$ be an edge coloring of $G\sim \gnp$.

Apply Lemma \ref{lem:ssrlc} with $\ep^4/r^2$, $m$, and $r$ and then Lemma \ref{lem:nicereduced} to get a ``cleaned-up" reduced graph $\Gamma'$ on $k'\geq (1-\ep^2)k$ vertices with $\delta(\Gamma')\geq (1-2\ep^2)k$.  Apply Theorem \ref{thm:mindeglargemono} to $\Gamma'$ to get a monochromatic tree $\cT$ in $\Gamma'$ with either $|\cT|\geq (1-2\ep^2)k'$ or $\cT$ having at least $(1-4\ep^2)k'/(r-1)$ leaves.  Apply Lemma \ref{lem:leafytree} to each edge of $\cT$ to get the desired tree $T$.  In the second case, note that the  number of leaves of $T$ is at least $$\frac{(1-4\ep^2)k'}{r-1}\cdot(1-4\ep^2)\frac{n}{k}\geq (1-4\ep^2)^2(1-\ep^2)\frac{n}{r-1}\geq (1-\ep)\frac{n}{r-1},$$
where the last inequality holds since $\ep\leq 1/9$.
\end{proof}

\section{Open Problems}\label{sec:conc}

The main open problem is to improve Theorem \ref{thm:gnp_threshold}.(iii) so as to avoid the need for the leftover vertices.  We make the following conjecture and note that for $r\geq 3$ it would be interesting to get any non-trivial bound on $p$ such that $\tp_r(\gnp)\leq r$.

\begin{conjecture}\label{con:cover_sharp} For all $\ep>0$ and $r\geq 1$,
if $p\ge \bfrac{(1+\ep)r\log n}{n}^{1/r}$, then a.a.s.~$\tp_r(\gnp)\leq r$.
\end{conjecture}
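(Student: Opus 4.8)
For $r=1$ the hypothesis is just above the connectivity threshold, so a.a.s.\ $\gnp$ is connected and $\tp_1\le1$; assume $r\ge2$. The natural place to start is the proof of Theorem~\ref{thm:gnp_threshold}(iii), which already establishes everything except the last $O(\log n/p)$ vertices, and the conjecture is exactly about removing those. So the plan is to keep its framework — apply the sparse regularity lemma to $G\sim\gnp$ (valid since $p=\Theta\bfrac{\log n}{n}^{1/r}=\frac{\omega(1)}{n}$), use Lemma~\ref{lem:linsizecommonleaves} to obtain an $(s,l,n')$-absorbing tree partition $\cT=\{T_1,\dots,T_s\}$ with common leaf set $L$ and $n'\ge(1-o(1))n$, then peel vertices off one colour at a time by multiple exposure — and eliminate its two sources of loss. \textbf{(a)} When $2\le s\le r$, Lemma~\ref{lem:linsizecommonleaves} only guarantees $|L|\ge\frac{n}{8er!}$. \textbf{(b)} Each peeling step replaces $L_i$ by the star neighbourhood $L_{i+1}=N_j(x_{i+1},L_i)$, shrinking the common leaf set by a factor of order $p$, so after the up to $r-1$ peels needed to make every colour usable the leaf set has size only $\Theta((1+\ep)\log n)$ times a constant depending on $r$ — too small to apply Lemma~\ref{lem:Ypartition}, which needs more than $r\log n$ neighbours.

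To cure \textbf{(b)} I would replace each star by a \emph{leafy} monochromatic tree: first observe, by averaging the Lemma~\ref{lem:edgedist} edge count over the $r$ colours, that some colour restricted to the bipartite graph between $L_i$ and $V\setminus L_i$ is dense and weakly $(\ep,1)$-regular, and then apply Lemma~\ref{lem:leafytree} to get a monochromatic tree in that colour whose leaf set contains $(1-o(1))|L_i|$ vertices of $L_i$. Each of the $\le r-1$ peels then costs only a $(1-o(1))$ factor, which is affordable precisely when $|L|=(1-o(1))n$ at the outset — so \textbf{(a)} must be handled too. For that one needs the $s\ge2$ case of Lemma~\ref{lem:linsizecommonleaves} (equivalently of Theorem~\ref{thm:mindegpartition}) to yield a common leaf set of size $(1-o(1))n$; I expect this to follow either from tracking leaves throughout Step~1 of Theorem~\ref{thm:mindegpartition} instead of discarding the sets $Y_i$ wholesale, or, more cleanly, by proving the corresponding sharp statement directly in the reduced graph, which has density $1-o(1)$ and minimum degree $(1-o(1))k$, along the lines of Theorem~\ref{thm:completepartition}.

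The vertices discarded by Lemma~\ref{lem:gnp-lose-nbrs} at each stage still need attention, and because the union bound over choices of $L$ fails one cannot simply rule out a bad leaf set. I would dispose of them with a final pass on a fresh exposure against a reserved linear-sized set $L^{*}$ disjoint from $L$: the set $B$ of vertices bad for $L$ has size $O(\log n/p)$, and since $L^{*}$ and the new exposure are independent of $B$, a.a.s.\ every $v\in B$ has $\Omega(np)$ fresh neighbours in $L^{*}$, hence $\Omega(np/r)=\omega(\log n)$ in some colour, which is enough to attach $v$ (peeling once more if its dominant colour is not yet usable). Just above the threshold one also has a.a.s.\ that every $r$-set has a common neighbour, since $\binom nr(1-p^r)^{n-r}\le n^{r}e^{-(1-o(1))(1+\ep)r\log n}\to0$, so the affine-plane obstruction of Observation~\ref{obs:need_s_trees} is absent; this is what makes $\tp_r\le r$, rather than $r+1$, plausible at all.

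The step I expect to be the genuine obstacle is the combination of \textbf{(a)} and \textbf{(b)}: producing an absorbing tree partition with a common leaf set of size $(1-o(1))n$ in the multicolour case. For complete graphs this is close to the partition version of Conjecture~\ref{con:LovRys} together with the extra demand that the common leaf set be almost all of $V$, and in $\gnp$ it is compounded by having to control, at each peeling step, which colour class a near-spanning leaf set can ``escape'' into; unlike the complete-graph argument of Theorem~\ref{thm:completepartition}, where escaping to a new colour costs only a $\frac1{r-i}$ factor, fresh randomness seems unavoidable in the random setting unless the leafy-tree construction can be made to run on the already-exposed edges — which is the crux.
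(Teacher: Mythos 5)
The statement you are asked to prove is, in the paper, only a \emph{conjecture}: the authors offer no proof, noting merely that the $r=2$ case was later settled by Kohayakawa, Mota, and Schacht. So there is nothing in the paper to compare your argument against; it must stand on its own, and as you candidly say yourself, it does not yet. Your roadmap is sensible -- start from the proof of Theorem~\ref{thm:gnp_threshold}(iii), identify the two sources of loss, and observe that the threshold is exactly where Observation~\ref{obs:need_s_trees} becomes inapplicable because every $r$-set has a common neighbour -- but the step you flag as ``the genuine obstacle'' is not a technicality to be filled in; it is the whole problem. Strengthening Lemma~\ref{lem:linsizecommonleaves} (equivalently Theorem~\ref{thm:mindegpartition}) to produce, for $2\le s\le r$, an absorbing tree partition with common leaf set of size $(1-o(1))n$ is in structural tension with how those arguments work: in Step~1 of Theorem~\ref{thm:mindegpartition} each $Y_{i+1}$ is a single vertex's colour-$(i+1)$ neighbourhood intersected with $Y_i$, so $|Y_k|$ is capped at roughly $n/r!$ from the very first intersection, and ``tracking leaves instead of discarding the $Y_i$'' would require something qualitatively different -- namely disjoint sets $V(T_i)\setminus L$ of size $\omega(1)$ summing to $o(n)$ such that every one of $(1-o(1))n$ vertices has a colour-$i$ edge into $V(T_i)\setminus L$ simultaneously for all $i\in[s]$ -- which no lemma in the paper supplies and which is not obviously true even for $K_n$ with an adversarial colouring.

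Two of your supporting steps also have gaps beyond what you acknowledge. In the leafy-tree peeling of (b), the internal vertices of the new tree $T_{s+i+1}$ must be disjoint from $V(T_1),\dots,V(T_{s+i})$ outside the common leaf set, so they must be drawn from $V_0$, a set of size only $O(\epsilon n)$; making Lemma~\ref{lem:leafytree} apply there requires a fresh sparse-regularity statement on the freshly exposed edges restricted to $L_i\times V_0$, with the dominant colour guaranteed to lie outside $[s+i]$, none of which is automatic. And the final cleanup against a reserved $L^{*}$ does not close: if $L^{*}$ is disjoint from $L$ and was held out of the main construction, then $L^{*}$ is not contained in any $V(T_c)$, so a bad vertex $v$ having $\omega(\log n)$ colour-$c$ edges into $L^{*}$ does not by itself attach $v$ to the colour-$c$ tree -- the $L^{*}$-endpoint of such an edge still has to be put somewhere, and the argument is circular. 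In short, you have correctly located the crux and sketched a reasonable strategy, but the proposal is a research plan rather than a proof, which is consistent with the statement's status as an open conjecture in the paper.
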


Note: While this paper was under review, Kohayakawa, Mota, and Schacht \cite{KMS} proved the $r=2$ case of Conjecture \ref{con:cover_sharp}.

It would be interesting to extend Theorem \ref{thm:2colorcover} to a partition version. 

\begin{conjecture}
For all graphs $G$ on $n$ vertices, if $\delta(G)\geq \frac{2n-5}{3}$, then $\tp_2(G)\leq 2$. 
\end{conjecture}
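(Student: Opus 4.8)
The plan is to start from the cover provided by Theorem~\ref{thm:2colorcover} and upgrade it to a partition by carefully splitting the overlap. Fix a $2$-coloring of $G$ and apply Theorem~\ref{thm:2colorcover} to get a cover $\{C_1,C_2\}$ of $V(G)$ by two \emph{maximal} monochromatic components, chosen (as in the proof of that theorem) so that as many colors as possible are represented. If $C_1$ and $C_2$ have the same color, then being distinct maximal monochromatic components they are vertex-disjoint, so $\{C_1,C_2\}$ is already a partition and we are done; and if $V(C_i)=V(G)$ for some $i$, we are done with a single spanning tree. So we are reduced to the case $C_1=R$ a maximal red component, $C_2=B$ a maximal blue component, and, writing $X:=V(R)\setminus V(B)$, $A:=V(R)\cap V(B)$, $Y:=V(B)\setminus V(R)$, all three sets nonempty with $V(G)=X\cup A\cup Y$ a disjoint union.

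Next I would record the structural consequences. By maximality of $B$, no vertex of $X$ sends a blue edge into $V(B)=A\cup Y$, and by maximality of $R$, no vertex of $Y$ sends a red edge into $V(R)=X\cup A$. Hence every $X$–$A$ edge is red, every $Y$–$A$ edge is blue, and there are no $X$–$Y$ edges at all. The minimum degree hypothesis then forces $A$ to be large: since $N(x)\subseteq X\cup A$ for $x\in X$ and $N(y)\subseteq A\cup Y$ for $y\in Y$, we get $|X|+|A|\ge\delta(G)+1$ and $|Y|+|A|\ge\delta(G)+1$, whence $|A|\ge\frac{n-4}{3}$ and $|X|,|Y|\le\frac{n+2}{3}$; moreover each $x\in X$ has at least $\delta(G)+1-|X|\ge\frac{n-4}{3}$ red neighbours in $A$ and each $y\in Y$ has at least $\frac{n-4}{3}$ blue neighbours in $A$. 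A short computation along the same lines shows that for every $x\in X$ and $y\in Y$ the red neighbourhood of $x$ in $A$ and the blue neighbourhood of $y$ in $A$ together contain more than $|A|$ vertices, hence intersect; this mirrors the common-neighbour estimate used in Case~1 of the proof of Theorem~\ref{thm:2colorcover}.

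The goal is now the following splitting lemma: partition $A=A_R\cup A_B$ so that $X\cup A_R$ induces a connected red subgraph and $A_B\cup Y$ induces a connected blue subgraph; taking a spanning tree of each then yields the desired monochromatic partition of $V(G)$. My plan for the split is a greedy/absorption argument: reveal the vertices of $A$ one at a time in a carefully chosen order (for instance the order in which they are reached by a breadth-first search of a red spanning tree of $R$), maintaining a red ``part'' initialized to $X$ together with the already-assigned red vertices and a blue ``part'' initialized to $Y$, and assigning each new vertex $a$ to the red part if it has a red edge into it and otherwise to the blue part. The density bounds above (each $x$ red-dominating a constant fraction of $A$, each $y$ blue-dominating a constant fraction of $A$, and the dominated sets overlapping) should guarantee that no vertex ever gets stuck and that both parts stay connected throughout: in the extremal regime $|A|\approx\frac{n-4}{3}$, $|X|\approx|Y|\approx\frac{n+2}{3}$ each $x$ is red-complete to $A$ and each $y$ is blue-complete to $A$, so any nonempty split works, while in the opposite regime (one of $X$, $Y$ small) the corresponding side is so dense that its part is connected no matter how the split is chosen.

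The main obstacle is precisely this splitting lemma: ensuring that both halves of the partition of $A$ remain connected \emph{in their colours simultaneously}, since $G[X]$ and $G[Y]$ need not themselves be connected in red, resp.\ blue, so $A_R$ must carry enough red edges to glue $X$ together and $A_B$ enough blue edges to glue $Y$ together, and a priori these two demands could conflict. Setting up the processing order and the bookkeeping so that the minimum-degree estimates rule out every conflict is where the real work lies; one will also likely need to dispose of small $n$ (where $\delta(G)\ge\frac{2n-5}{3}$ no longer even forces $G$ to be connected) by a direct check. A natural alternative to the greedy scheme is to define $A_R$ and $A_B$ explicitly in terms of which vertices of $A$ have red neighbours in $X$ versus blue neighbours in $Y$ (possibly moving a small ``core'' of vertices to make the sides internally connected), and to derive the internal connectivity of $X\cup A_R$ and $A_B\cup Y$ from that of $R$ and $B$ respectively; I would try both and see which keeps the extremal-case constants manageable.
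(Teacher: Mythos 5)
The statement you are attempting to prove is Conjecture~7.2 of the paper, explicitly posed as an \emph{open problem}: the authors remark only that ``it would be interesting to extend Theorem~\ref{thm:2colorcover} to a partition version'' and offer no proof. So there is no paper argument to compare against; what you have written is an attack on an open conjecture.

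As an attack, your reduction is sound but incomplete, and you yourself flag the gap. Starting from the cover $\{R,B\}$ supplied by Theorem~\ref{thm:2colorcover}, your observation that the only problematic case is one red and one blue maximal component, with $X=V(R)\setminus V(B)$, $A=V(R)\cap V(B)$, $Y=V(B)\setminus V(R)$ nonempty, all $X$--$A$ edges red, all $Y$--$A$ edges blue, and no $X$--$Y$ edges, is correct, as are the bounds $|A|\ge\frac{n-4}{3}$, $|X|,|Y|\le\frac{n+2}{3}$, and the red/blue degree estimates into $A$. But everything then hinges on the splitting lemma: partition $A=A_R\cup A_B$ so that $X\cup A_R$ is red-connected and $A_B\cup Y$ is blue-connected. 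You do not prove this, and the two schemes you sketch are not yet arguments. The greedy BFS-ordered assignment does not, as written, guarantee that $A_R$ ends up \emph{red}-connecting $X$; each $x\in X$ having $\Theta(n)$ red neighbours in $A$ does not prevent the assigned $A_R$ from falling into pieces seen by disjoint subsets of $X$, and the symmetric demand on the blue side means the two choices can genuinely interfere. The ``explicit $A_R$, $A_B$ plus a small core'' alternative is likewise only named, not constructed. Your closing observation that the two extremal regimes are individually easy is true but does not cover the intermediate regime, which is exactly where a greedy argument tends to break. In short: the setup is the natural one, but the core difficulty remains unresolved, which is consistent with the paper leaving this as a conjecture.
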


For the following conjecture, Theorem \ref{thm:2colorcover} provides the $r=2$ case, and for the $r=1$ case, note that if $\delta(G)\geq \frac{n-1}{2}$, then $G$ is connected, i.e.~$\tp_1(G)=\tc_1(G)=1$.  Furthermore, Example \ref{obs:tcrlb} shows that this is best possible if true.

\begin{conjecture}\label{prob:cov}
For all $r\geq 1$, if $G$ is a graph on $n$ vertices with $\delta(G)\geq \frac{r(n-r-1)+1}{r+1}$, then $\tc_r(G)\leq r$.
\end{conjecture}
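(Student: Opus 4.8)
The plan is to mimic the structure of the proof of Theorem \ref{thm:2colorcover} (the $r=2$ case) but in a way that scales with $r$; alternatively, one could try to reverse-engineer the extremal construction in Example \ref{obs:tcrlb} and prove that it is essentially the unique obstruction. I would start as in Observation \ref{obs:need_s_trees}: take a minimal monochromatic component cover $\cT$ with the maximum number of colors represented, and with each component maximal, and suppose for contradiction that $|\cT|\geq r+1$. By maximality, each component $T\in\cT$ has a nonempty private set $\varphi(T)$ of vertices not covered by any other component of $\cT$. The key quantitative tool is that if $v_1\in\varphi(T_1),\dots,v_{r+1}\in\varphi(T_{r+1})$ are $r+1$ private vertices of $r+1$ distinct components and no two of them are adjacent, then
\[
\Big|\bigcap_{i=1}^{r+1}N(v_i)\Big|\ \geq\ n-(r+1)-(r+1)\big(n-1-\delta(G)\big)\ \geq\ 1,
\]
using $\delta(G)\geq \frac{r(n-r-1)+1}{r+1}$, exactly as in Case 1 of Theorem \ref{thm:2colorcover}. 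A common neighbor $w$ of these $r+1$ mutually nonadjacent private vertices then forces a contradiction, because $w$ lies in some component $T$ of $\cT$ which already ``owns'' at most one color, and the edge from $w$ to the private vertex of a different-colored component would place that private vertex in $T$.

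The first real step is therefore to argue that, after cleaning up, we may assume the private sets of the $r+1$ components are pairwise ``cross-adjacent'' in a structured way — i.e.~for most pairs $T_i,T_j$ there is an edge between $\varphi(T_i)$ and $\varphi(T_j)$ — and to extract from this a new monochromatic component spanning many private sets, contradicting minimality. In the $r=2$ case this is clean: if all private sets are pairwise completely joined then they induce a monochromatic complete multipartite graph in the ``other'' color, giving a smaller cover. For general $r$ the private sets can have up to $r$ colors among them, so the same trick does not immediately produce a single monochromatic component; this is where I expect the proof to split into cases according to how many colors appear among the components of $\cT$ and how the color classes of components interact. When some color, say color $r$, is carried by several components $R_1,\dots,R_t$ with $t\geq 3$, one should look at the smallest such component and, as in Case 2 of Theorem \ref{thm:2colorcover}, use the minimum-degree bound to show its vertices are covered by at most two components of a single other color, then count vertices to collapse the cover.

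The main obstacle is precisely this bookkeeping when $r\geq 3$: balancing the number of distinct colors present in $\cT$ against the minimum-degree budget, and ruling out configurations where the components split evenly across all $r$ colors (which is exactly what the affine-plane-like construction of Example \ref{obs:tcrlb} does). One likely needs an inductive or weighted argument: restrict to the union of the private sets, observe that at most $r-1$ of the colors can appear there in a way that keeps vertices ``trapped,'' and apply a pigeonhole/degree-counting argument showing that $\delta(G)\geq \frac{r(n-r-1)+1}{r+1}$ is just large enough to force a common neighbor among $r+1$ carefully chosen private vertices that are pairwise nonadjacent. Getting the constant exactly $\frac{r(n-r-1)+1}{r+1}$ (rather than something slightly weaker) will require being careful that the ``bad'' vertices one throws away number at most $r+1$ and that the $r+1$ private vertices can always be chosen mutually nonadjacent unless their private sets already induce the structure needed to merge components. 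I would expect this last point — showing that failure of the common-neighbor argument forces a mergeable monochromatic structure — to be the crux, and the reason the conjecture is still open for $r\geq 3$.
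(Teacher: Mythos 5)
The statement you address is Conjecture~\ref{prob:cov}; the paper offers no proof. It only notes that the $r=1$ case is trivial ($\delta(G)\geq (n-1)/2$ forces connectivity), that the $r=2$ case is Theorem~\ref{thm:2colorcover}, and that Example~\ref{obs:tcrlb} shows the bound is best possible if the conjecture is true. So there is no paper argument to compare against; for $r\geq 3$ this is left open.

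Your plan sensibly mirrors the two-color proof and correctly locates the stumbling block, but, as you yourself say, it does not close the argument. Two concrete remarks. First, there is an arithmetic slip in your displayed bound: for $r+1$ pairwise nonadjacent private vertices $v_1,\dots,v_{r+1}$, each one has at most $n-1-r-\delta(G)$ non-neighbors outside the set $\{v_1,\dots,v_{r+1}\}$ (you must subtract the other $r$ of the $v_i$'s, already accounted for as mutual non-neighbors), so the right computation is
\[
\Big|\bigcap_{i=1}^{r+1}N(v_i)\Big|\geq n-(r+1)-(r+1)\big(n-1-r-\delta(G)\big)=(r+1)\delta(G)-r(n-r-1)\geq 1,
\]
and the hypothesis $\delta(G)\geq\frac{r(n-r-1)+1}{r+1}$ is calibrated for exactly this; the expression you wrote, missing the $-r$ inside the parentheses, would instead require $\delta(G)\geq\frac{rn+1}{r+1}$, which the hypothesis does not give. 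Second, and more fundamentally, the genuine gap is in the fallback branch: in the $r=2$ argument, if the private sets of the red components are pairwise completely joined, every cross edge is forced to be blue (it cannot be red by privateness), so they assemble into a single blue complete multipartite component and the cover shrinks. For $r\geq 3$ those cross edges are merely ``not the color of either endpoint's component,'' so they may use any of the remaining colors; pairwise cross-adjacency of the private sets produces no single monochromatic merge, and Example~\ref{obs:tcrlb} (the affine-plane-like coloring) is precisely a configuration where the private sets interact across several colors without collapsing. Your proposal names this as the crux and leaves it unresolved, which matches the status of the problem in the paper; what is still missing is an idea that turns ``no common neighbor among pairwise nonadjacent private vertices'' into a structural win for $r\geq 3$, and your sketch does not supply one.
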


Regarding the distinct colors variant of these problems, we make the following conjecture which is true for $r=1$ as above, and for $r=2$ by Letzter's result \cite{L2}.  Furthermore, Example \ref{ex:tcrG} shows that this is best possible if true.  

\begin{conjecture}\label{mindegtrees}
Let $r\geq 1$.  If $\delta(G)\geq (1-\frac{1}{2^r})n$, then $G$ has property $\cT\cC_r$ ($\cT\cP_r$).
\end{conjecture}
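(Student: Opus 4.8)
The plan is to reduce to a purely combinatorial problem about partitions, and then attack that by induction on $r$.

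\textbf{Step 1 (reformulation).} Following the auxiliary-graph trick from Proposition \ref{prop:comneigh}, for an $r$-colouring of $G$ I replace each colour class by the partition $\mathcal P_c$ of $V(G)$ into the vertex sets of its monochromatic components. Covering $V(G)$ by $\le r$ monochromatic trees of \emph{distinct} colours is then exactly: choose, for each $c\in[r]$, a block $B_c\in\mathcal P_c$ (or skip colour $c$) with $\bigcup_c B_c=V(G)$; property $\cT\cP_r$ is the same with the extra requirement that the chosen trees can be made pairwise disjoint. First I would record two consequences of the hypothesis $\delta(G)\ge(1-2^{-r})n$: writing $\kappa_c(v)$ for the block of $\mathcal P_c$ containing $v$, every $v$ satisfies $N[v]\subseteq\bigcup_{c\in[r]}\kappa_c(v)$ (a neighbour of $v$ lies in $\kappa_c(v)$ for the colour $c$ of that edge), so $|V(G)\setminus\bigcup_c\kappa_c(v)|\le 2^{-r}n$; and any set of at most $2^r-1$ vertices has a common neighbour, so $\alpha(G)\le 2^r-1$. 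The latter already yields $\tc_r(G)\le r(2^r-1)$ via Observation \ref{rysertrivial}, which is the right flavour but far from $r$; the real content is to exploit the fact that each $\mathcal P_c$ is an honest partition into cliques of the auxiliary graph.

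\textbf{Step 2 (induction on $r$).} The base cases are $r=1$ (here $\delta(G)\ge n/2$ forces $G$ connected, so one tree suffices) and $r=2$ (here $\delta(G)\ge\tfrac34 n$ is exactly Letzter's theorem \cite{L2} on covering by two cycles of different colours, hence in particular by two trees of different colours). For the inductive step I would try to peel off one colour: find a colour $c$ and a colour-$c$ component $C$ so that the induced colouring of $G-C$ by the remaining $r-1$ colours has minimum degree at least $(1-2^{-(r-1)})|V(G)\setminus C|$; then induction gives $r-1$ pairwise-disjoint trees of the remaining colours covering $V(G)\setminus C$, and $C$ supplies the $c$-th tree. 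To get the partition version simultaneously I would carry along the extra data of a large common leaf set, exactly as in the $(k,l,n)$-absorbing tree partitions of Theorem \ref{thm:mindegpartition}, so that overlapping vertices can be redistributed; thus the genuine target is a distinct-colour absorbing tree partition with minimum-degree threshold $1-2^{-r}$, sharpening Theorem \ref{thm:mindegpartition}.

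\textbf{Main obstacle.} The difficulty is that peeling a colour-$c$ component $C$ does not kill all colour-$c$ edges: vertices of $V(G)\setminus C$ may still have large colour-$c$ degree inside $V(G)\setminus C$, so the $(r-1)$-colour minimum degree of $G-C$ can be far too small for the inductive hypothesis. Worse, the threshold is genuinely tight — Example \ref{ex:tcrG} is an $r$-colouring with $\delta$ only one below $(1-2^{-r})n$ in which every colour-$c$ component lies inside a ``coordinate half-space'' and \emph{no} peeling works — so any correct argument must be stability-flavoured. The plan is therefore a dichotomy: either some colour admits a component whose removal leaves the required $(1-2^{-(r-1)})$-type minimum degree (and we induct), or for every colour $c$ the blocks of $\mathcal P_c$ essentially bipartition $V(G)$ into two nearly equal halves, in which case the colouring is within $o(n)$ edges of the binary/antipodal structure of Example \ref{ex:tcrG} and one argues directly, using $N[v]\subseteq\bigcup_c\kappa_c(v)$ together with the slack hidden in ``essentially'' to pick, for each $c$, one half so that the chosen halves cover $V(G)$. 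I expect establishing the quantitative form of this dichotomy (ruling out intermediate configurations between ``a good peel exists'' and ``near Example \ref{ex:tcrG}'') to be the crux, with the leaf/partition bookkeeping being routine once the cover version is in hand.
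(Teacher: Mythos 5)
This statement is not a theorem in the paper: it is Conjecture~\ref{mindegtrees}, which the authors explicitly leave open. The paper offers no proof, only the evidence that the $r=1$ case is trivial, the $r=2$ case follows from Letzter's theorem~\cite{L2}, and Example~\ref{ex:tcrG} shows the threshold $(1-2^{-r})n$ would be tight. So there is no ``paper's own proof'' to compare your attempt against, and you should not expect one.

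Judged on its own terms, your proposal is a research plan rather than a proof, and you say as much: the inductive step hinges on a stability dichotomy (``either a good peel exists, or the colouring is close to Example~\ref{ex:tcrG}'') which you do not establish, and you correctly identify it as the crux. Until that dichotomy is proved quantitatively, the argument does not close, and the paper gives no lemma that would supply it. Two smaller points. First, your claim ``any set of at most $2^r-1$ vertices has a common neighbour, so $\alpha(G)\le 2^r-1$'' is incorrect as an implication: the common-neighbour property does not bound the independence number. The complete $2^r$-partite graph with equal parts has $\delta=(1-2^{-r})n$, every $2^r-1$ vertices have a common neighbour, yet $\alpha(G)=n/2^r$; the correct elementary bound from the degree hypothesis is $\alpha(G)\le n-\delta(G)\le 2^{-r}n$, so the trivial bound from Observation~\ref{rysertrivial} is $r\cdot 2^{-r}n$, not $r(2^r-1)$. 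This is only used for motivation, but it should be fixed. Second, for the base case $r=2$ note that Letzter's theorem is stated for $\delta(G)>3n/4$ (and the construction showing tightness has $\delta$ just below $3n/4$), so you should check that the boundary case $\delta(G)\ge 3n/4$ is actually covered before declaring the base case done. Overall: the reformulation via the component partitions $\mathcal P_c$ and the observation $N[v]\subseteq\bigcup_c\kappa_c(v)$ are sound and natural starting points, and the instinct that any proof must be stability-flavoured (because the extremal example is essentially unique) is reasonable, but as written this remains a sketch of an approach to an open problem, not a proof.
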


Finally, in Theorem \ref{thm:mindegpartition} we prove that if an $r$-colored graph $G$ has sufficiently large minimum degree, then $G$ can be partitioned into $r$ monochromatic trees, each of which implicitly has many leaves.  What about partitioning into trees with few leaves?

\begin{problem}
For all $r\geq 2$, sufficiently small $\ep>0$, and sufficiently large $n_0$, if $G$ is a graph on $n\geq n_0$ vertices with $\delta(G)\geq (1-\ep)n$, then in every $r$-coloring of $G$ there exists a partition of $G$ into $O(r)$ monochromatic trees so that each tree has $O(1)$ leaves.
\end{problem}

\section{Acknowledgements}

We thank Rajko Nenadov, Frank Mousset, Nemanja \v{S}kori\'c and independently Hi\d{\^{e}}p H\'an for drawing our attention to an error in an earlier version of this paper related to Theorem \ref{thm:gnp_threshold}.  We also thank the referees for making many useful comments which helped us improve the organization of the paper.

\end{document}